\documentclass[nonacm,11pt]{acmart}
\usepackage{hyperref}

\AtBeginDocument{%
  \providecommand\BibTeX{{%
    \normalfont B\kern-0.5em{\scshape i\kern-0.25em b}\kern-0.8em\TeX}}}

\usepackage{booktabs}
\newcommand{\rkcomment}[1]{\textcolor{magenta}{#1}}

\newcommand{\abcomment}[1]{\textcolor{red}{#1}}
\newcommand{\ignore}[1]{}
\newcommand{\thesisrev}[1]{{#1}}

\newcommand{\revision}[1]{#1}

\DeclareRobustCommand{\rchi}{{\mathpalette\irchi\relax}}
\newcommand{\irchi}[2]{\raisebox{\depth}{$#1\chi$}}

\usepackage{mathtools}

\newcommand\comb[2][^n]{\prescript{#1\mkern-0.5mu}{}C_{#2}}

\def\vol{\text{vol}}

\newcommand{\partialh}{\partial_h}
\newcommand{\partialg}{\partial_g}
\newcommand{\newphi}{\hat{\phi}_h}
\newcommand{\xuphi}{\tilde{\phi}_h}
\newcommand{\phih}{\phi_h}
\newcommand{\phig}{\phi_g}
\newcommand{\newrho}{\hat{\rho}_{h,}{}}
\newcommand{\rhoh}{\rho_{h,}{}}
\newcommand{\rhog}{\rho_{g,}{}}

\ignore{

\newcommand{\partialh}{\partial_h}
\newcommand{\partialg}{\partial_g}
\newcommand{\newphi}{\hat{\phi}_h}
\newcommand{\xuphi}{\tilde{\phi}_h}
\newcommand{\phih}{\phi_h}
\newcommand{\phig}{\phi_g}
\newcommand{\newrho}{\hat{\rho}_{h,}{}}
\newcommand{\rhoh}{\rho_{h,}{}}
\newcommand{\rhog}{\rho_{g,}{}}}

\newcommand{\evalAG}{\lambda_{}{}}
\newcommand{\evalLH}{\mu_{}{}}

\newcommand{\evalNLH}{\nu_{}{}}
\newcommand{\evalNLG}{\nu_{}{}}
\newcommand{\rh}{\bfr_{}{}}

\newcommand{\bfr}{\bm{r}}

\newcommand{\bfx}{\bm{x}}
\newcommand{\bfy}{\bm{y}}

\newcommand{\bfI}{\bm{I}}

\usepackage{bm}

\newcommand{\bfxi}{{\bm\xi}}

\newcommand{\bfpsi}{{\bm\psi}}

\newcommand{\calA}{{\mathcal A}}

\newcommand{\calD}{{\mathcal D}}

\newcommand{\calI}{{\mathcal I}}

\newcommand{\calL}{{\mathcal L}}

\newcommand{\calO}{{\mathcal O}}

\newcommand{\scrG}{{\mathscr G}}
\newcommand{\scrH}{{\mathscr H}}
\newcommand{\scrI}{{\mathscr I}}

\newcommand{\scrK}{{\mathscr K}}

\newcommand{\bbR}{{\mathbb R}}

\newcommand{\bbZ}{{\mathbb Z}}

\usepackage{subcaption}
\usepackage{algorithm}
\usepackage[noend]{algpseudocode}
\usepackage[utf8]{inputenc}
\usepackage{multirow}
\usepackage{color}
\usepackage{tikz}
\usepackage[scr=pxtx]{mathalfa}
\usepackage{xspace}
\usepackage{lineno}

\usetikzlibrary{shapes.geometric,arrows,shapes,positioning}
\tikzstyle{startstop} = [ellipse, minimum width=1.5cm, minimum height=1cm,text centered, draw=black, ultra thick, fill=red!30]
\tikzstyle{process} = [rectangle, minimum width=1cm, minimum height=1cm, text centered, draw=black, ultra thick, fill=orange!30]
\tikzstyle{process1} = [rectangle, minimum width=7cm, minimum height=1cm, text centered, draw=black, ultra thick, fill=orange!30]
\tikzstyle{decision} = [diamond, minimum width=2cm, minimum height=1cm, text centered, draw=black, ultra thick, fill=green!30]
\tikzstyle{arrow} = [ultra thick,->,>=stealth]
\makeatletter
\let\@authorsaddresses\@empty
\makeatother

\begin{document}
\title{A Cheeger Inequality for Hypergraphs and Its Applications}
\author{Raj Kamal}
\affiliation{%
  \institution{Indian Institute of Technology Delhi}
  \city{New Delhi}
  \country{India.\\	raj.cse.iitd@gmail.com}}

\author{Amitabha Bagchi}
\affiliation{%
  \institution{Indian Institute of Technology Delhi}
  \city{New Delhi}
  \country{India.\\bagchi@cse.iitd.ac.in}}



\begin{abstract}
  Hypergraphs provide a natural framework for modeling higher-order relationships, but the development of spectral techniques with provable guarantees for general non-uniform hypergraphs remains challenging. Building on Banerjee's normalized adjacency matrix and Spiro's averaging-based diffusion framework, we develop a spectral framework for non-uniform hypergraphs and establish Cheeger's inequality for their conductance. A fundamental result in the spectral theory of hypergraphs asserts that, for every non-covering hypergraph, the second-smallest eigenvalue of its normalized Laplacian is at most one. This spectral characterization yields an improved Cheeger's inequality for non-covering hypergraphs, and we show that the resulting inequality is tight on both sides using cycle and cube hypergraphs. Our framework further yields higher-order Cheeger inequalities and provides theoretical guarantees for Fiedler's spectral partitioning algorithm, all in the setting of hypergraphs. Finally and most notably, we construct a new family of optimal hypergraph expanders that is tight for the Alon--Boppana bound.
\end{abstract}
\newpage
\maketitle
\thispagestyle{empty}
\pagenumbering{arabic}

\section{Introduction}
\label{sec:intro}

Zhou, Huang, and Sch\"olkopf~\cite{zhou2006learning} pointed out as far back as 2006 that there are datasets where multiway relationships cannot be represented by pairwise relationships and need to be modeled by the hypergraph formalism. Nonetheless, there has been very little progress in the development of algorithms with provable guarantees for clustering and partitioning such datasets. One of the reasons for this is that while the usual model of graphs---we refer to them as $2$-graphs---have a well developed spectral theory which has been used profitably to describe algorithms with theoretical guarantees for problems such as local clustering (c.f. Andersen, Chung and Lang~\cite{andersen2007using}), graph partitioning (c.f. the works by Spielman and Teng~\cite{spielman1996spectral,spielman2013local}) and even multi-way partitioning (c.f. Lee, Gharan and Trevisan~\cite{lee2014multiway}), this line of attack has not materialized for hypergraphs. This is because the kind of spectral theory that lends itself to partitioning-related problems has not been satisfactorily developed so far.

A related structural property that has received attention in hypergraph theory is the notion of a covering hypergraph. A hypergraph is called covering if every pair of vertices is contained in at least one hyperedge~\cite{lu2021hamiltonian}. Equivalently, its $2$-shadow is a complete graph. The covering property guarantees that every pair of vertices participates in a common higher-order interaction and has been studied extensively in connection with extremal and Hamiltonian properties of hypergraphs. For example, Lu and Wang~\cite{lu2021hamiltonian} established strong Berge-cycle properties for covering $3$-uniform hypergraphs, while Falgas-Ravry and Zhao~\cite{falgas2016codegree} investigated extremal codegree thresholds associated with covering $3$-uniform hypergraphs. These works focus on combinatorial properties of covering hypergraphs. In contrast, our work studies covering hypergraphs from a spectral perspective and proves an improved Cheeger's inequality for non-covering hypergraphs.

The attempts at developing Cheeger-like inequalities relating an isoperimetric quantity like conductance to the eigenvalues of a matrix representation, such as the Laplacian or the normalized Laplacian, have run into questions like ``What is the correct definition of conductance?'' and, even more vexingly, ``what is the correct algebraic object whose eigenvalues should be studied to characterize such a conductance?'' Even in the cases where these questions were answered satisfactorily, like in the paper by Xu and Zhou~\cite{xu2024normalized} or the work of Mulas~\cite{mulas2021cheeger}, the restriction of the results to uniform hypergraphs---hypergraphs in which each hyperedge has the same number of vertices---limits the applicability of the results provided. Nor do these works focus on the algorithmic questions that are important for computing-related problems. In this paper, we attempt to plug these gaps and outline a theory that we demonstrate can be used to analyze various algorithms on non-uniform hypergraphs.

First, we work with an adjacency matrix proposed by Banerjee~\cite{banerjee2021spectrum} that has not been used much in the literature. Second, we define a quantity called the {\em diffusion conductance}, which differs from the usual definition of hypergraph conductance. The hypergraph conductance is defined simply as the worst-case ratio of the {\em number of hyperedges} in the boundary of the cut to the volume of the smaller side, whereas the diffusion conductance takes as its numerator the {\em quantity of diffusion} that is cut off by removing the hyperedges in the boundary while maintaining the same denominator. It is not clear that the diffusion conductance is superior to conductance as a quantity to study. \ignore{However, we will show in Section~\ref{sec:ls-theorem} that it helps us prove better bounds on the convergence of diffusion in hypergraphs.} We prove Cheeger's inequalities for {\em both} notions of conductance and show that both of them are tight on both sides, except that the inequality for the conductance is tight on the upper side only up to a factor of $\Upsilon_H$ where $\Upsilon_H$ is the rank of the hypergraph, i.e. the size of the largest hyperedge, which is expected to be constant w.r.t. the number of nodes in most real-world hypergraphs. With the Cheeger's inequality in place, we construct hypergraph expanders from $2$-graph expanders.

One major benefit of working with Banerjee's adjacency matrix---an edge size-normalized version of the adjacency matrices used for hypergraphs in the past---and the diffusion conductance together is that they lead us to the definition of an edge-weighted $2$-graph which is algebraically equivalent (AE) to a hypergraph in the sense that its adjacency matrix, Laplacian and normalized Laplacian are identical to that Banerjee's adjacency matrix, Laplacian and normalized Laplacian for the hypergraph. The AE graph is a weighted version of the clique expansion of the hypergraph and has often been used in algorithmic settings in the past (e.g., the work on partitioning by Zien, Schlag, and Chan~\cite{zien1999multilevel}), but we also find theoretical uses for it. \ignore{The AE graph helps us prove higher-order Cheeger inequalities and also improve the convergence bound of the Lov{\'a}sz-Simonovits theorem for hypergraphs proved by Kamal and Bagchi~\cite{kamal2024lovasz}.} Our results have significant algorithmic implications. We prove conductance bounds on the partitions given spectral hypergraph partitioning using the second eigenvector of the Laplacian, following the method of Spielman and Teng in $2$-graphs~\cite{spielman1996spectral}, and also for multiway partitioning using the method of Lee, Gharan, and Trevisan~\cite{lee2014multiway}.  \ignore{In the case of local clustering, we are able to give a running time improvement in the personalized page rank-based algorithm, especially for uniform hypergraphs, that was analyzed by Kamal and Bagchi~\cite{kamal2024lovasz}. This improvement comes from the fact that the diffusion conductance that we define is better suited for a Lov{\`a}sz-Simonovits analysis. The improvement is at least a factor of 2 and can be as high as the size of the largest hyperedge. This can be a very significant speedup: for example, the DBPedia-genre\_LCC dataset used by Takai, Miyauchi, Ikeda, and Yoshida~\cite{takai2020hypergraph} puts musical artists from a single genre into a hyperedge and reports an average hyperedge size of 92. }

Explicit constructions of $2$-graph expanders are well studied; we refer the reader to the survey by Hoory, Linial, and Wigderson~\cite{hoory2006expander} and to Trevisan’s book~\cite{trevisan2017expanders}. In contrast, only a small number of hypergraph expander constructions are known~\cite{feng1996spectra,song2023hypergraph}. Related work includes spectral sparsification of hypergraphs, such as the recent results of Kapralov, Krauthgamer, Tardos, and Yoshida~\cite{kapralov2021towards}.

Simplicial complexes form a special class of hypergraphs that are closed under inclusion~\cite{zhang2023higher} and have been extensively studied under the name \emph{high-dimensional expanders}~\cite{lubotzky2018high}. However, many real-world networks are naturally modeled as hypergraphs that are not simplicial complexes. For example, in a network of international cricket teams, each team consists of exactly 11 players, but no proper subset forms a team. This highlights the need for further study of general hypergraph expanders. In this direction, we introduce a new class of hypergraph expanders that is tight for the Alon--Boppana bound proved by Feng and Li~\cite{feng1996spectra}.

The key contributions of our paper are
\begin{itemize}
\item When vertex weights are independent of the incident hyperedges, Chitra and Raphael showed that a random walk on a hypergraph is equivalent to a random walk on an appropriately weighted clique expansion~\cite{chitra2019random}. Building on this connection, we show that Banerjee's normalized adjacency matrix of a hypergraph is algebraically equivalent to that of a weighted clique-expansion $2$-graph. This provides a clean and unifying framework for extending powerful spectral tools from $2$-graphs to non-uniform hypergraphs.

\item We introduce a new quantity, termed diffusion conductance, which enables us to extend powerful spectral results from $2$-graphs to non-uniform hypergraphs. Using this quantity, we establish Cheeger inequalities for hypergraphs and, in particular, obtain a sharper upper bound on the conductance of hypergraphs.

\item A fundamental result in the spectral theory of hypergraphs asserts that the second-smallest eigenvalue of the normalized Laplacian of every non-covering hypergraph is at most one. This spectral characterization yields a strengthened Cheeger's inequality for non-covering hypergraphs. Moreover, we establish that the resulting bounds are tight on both sides by exhibiting cycle and cube hypergraphs that attain the corresponding bounds.

\item We translate our theoretical spectral bounds into concrete performance and running-time guarantees for $2$-way spectral partitioning and higher-order multi-way partitioning in the hypergraph setting.

\item Finally, and most notably, we construct a new family of optimal hypergraph expanders that attain the Alon--Boppana bound for hypergraphs established by Feng and Li~\cite{feng1996spectra}.

\ignore{\item Improving upon the Lov{\'a}sz-Simonovits theorem for hypergraphs, first presented by Kamal and Bagchi~\cite{kamal2024lovasz} for uniform hypergraphs, we are able to remove the dependence of the convergence time of diffusion on the size of the hyperedges. }
\end{itemize}

\paragraph{Paper organization.} In Section~\ref{sec:prelims}, we present our notation and definitions and foundational results from $2$-graphs, and also, in Section~\ref{sec:prelims:ae-graph}, we define the algebraically equivalent (AE) $2$-graph of a hypergraph and prove some simple properties of the construction. Our Cheeger's inequality for hypergraphs is presented in Section~\ref{section:cheeger-inequality}, with Section~\ref{section:cheeger-inequality:tightness} containing a discussion on tight examples and Section~\ref{section:cheeger-inequality-high-order} presenting higher order Cheeger inequalities. \ignore{Section~\ref{sec:ls-theorem} describes our improved Lov\'asz-Simonovits theorem for hypergraphs.} The algorithmic implications of our results are discussed in detail in Section~\ref{sec:algorithmic}. We present our hypergraph expander construction results in Section~\ref{section:expander-hypergraphs}. A discussion of related works from the literature is carried out in Section~\ref{sec:related}. We pose some open questions and discuss the significance of our results in Section~\ref{sec:conclusion}.

\section{Related Work}
\label{sec:related}

The algebraic study of hypergraphs is either through matrices based on the adjacency structure starting from Rodr\'iguez~\cite{rodriguez:2002} or using hypermatrices and tensor algebra (c.f., Banerjee, Char and Mondal~\cite{banerjee2017spectra}). Since the latter approach is restricted to uniform hypergraphs, we do not explore that direction further. Banerjee introduced a version of the adjacency matrix that had entries normalized by one less than the edge size for each edge~\cite{banerjee2021spectrum}, considering it more suitable for use in non-uniform hypergraphs. We have also used this matrix in our work. Mulas, Kuehn, B{\"o}hle and Jost discuss hypergraph adjacency matrices in more general settings~\cite{mulas2022random}. Bellaachia and Al-Dhelaan introduce the adjacency matrix of hypergraphs with edge-dependent vertex weights~\cite{bellaachia2013random}. In all these works, the adjacency matrix is a starting point for the definition of a Laplacian or normalized Laplacian matrix, but in Chitra and Raphael~\cite{chitra2019random}, a Laplacian is defined based on a random walk defined on edge and vertex-weighted hypergraphs. Chan, Louis, Tang, and Zhang~\cite{chan2018spectral} and Takai, Miyauchi, Ikeda, and Yoshida~\cite{takai2020hypergraph} deviate from both the matrix and tensor-based routes by using the name hypergraph Laplacian for two different families of operators that vary based on the vector they are applied to. 

A structural property of hypergraphs that has been studied extensively in combinatorics is the covering property. A hypergraph is called covering if every pair of vertices is contained in at least one hyperedge~\cite{lu2021hamiltonian}. Equivalently, the $2$-shadow of the hypergraph is a complete graph. Covering hypergraphs have been investigated in connection with extremal, codegree, and Hamiltonian problems. Lu and Wang~\cite{lu2021hamiltonian} studied Hamiltonian Berge cycles in covering $3$-uniform hypergraphs, while Falgas-Ravry and Zhao~\cite{falgas2016codegree} characterized codegree thresholds for covering $3$-uniform hypergraphs. Related covering and tiling problems for uniform hypergraphs have also been studied through tight cycles and minimum codegree conditions~\cite{han2015minimum,han2021covering}. These works focus primarily on combinatorial properties of covering hypergraphs, whereas our work investigates covering hypergraphs from a spectral perspective and proves an improved Cheeger's inequality for non-covering hypergraphs.

The conductance of the 2-graph is an isoperimetric quantity, which is a discrete analog of a quantity studied by Cheeger~\cite{cheeger1970lower} in the context of Riemannian manifolds and is also known as Cheeger's constant in the literature. For hypergraphs, a simple generalization was used by Chan, Louis, Tang, and Zhang~\cite{chan2018spectral}, who defined the conductance of a cluster as the ratio of the sum of the weights of hyperedges leaving the cluster to the sum of degrees of the vertices in the cluster, i.e., the volume of the cluster. Banerjee~\cite{banerjee2021spectrum}, Takai, Miyauchi, Ikeda, and Yoshida~\cite{takai2020hypergraph}, and Kamal and Bagchi~\cite{kamal2024lovasz} also worked with this definition of conductance when discussing hypergraph clustering. Mulas~\cite{mulas2021cheeger} worked with a different quantity: the ratio of the sum of $|S\cap e||S^c\cap e|$ over all edges to the volume of $S$ was called Cheeger's constant associated with $S \subseteq V$.  Our definition of diffusion conductance (Section~\ref{sec:prelims:definitions:hypergraph}) is an edge-by-edge scaled version of this quantity.

For $2$-graphs, a theorem relating the conductance to the second smallest eigenvalue of the normalized Laplacian matrix was proved by Alon and Milman~\cite{alon1985lambda1} and Alon~\cite{alon1986eigenvalues} and given the name Cheeger's inequality. Chung~\cite{chung2007four} has provided multiple different proofs for this foundational result. In the context of multi-way-partitioning of the vertex set, Cheeger's inequalities relating higher order conductances to the higher order eigenvalues of the normalized Laplacian were established by Lee, Gharan, and Trevisan~\cite{lee2014multiway} and Kwok, Lau, Lee, Oveis Gharan, and Trevisan~\cite{kwok2017improved}.

Mulas~\cite{mulas2021cheeger} and Xu and Zhou~\cite{xu2024normalized} established Cheeger inequalities for uniform hypergraphs that are closely related to our main result, but their results are restricted to uniform hypergraphs. Chan, Louis, Tang, and Zhang established a Cheeger's inequality for hypergraphs based on a nonlinear diffusion operator~\cite{chan2018spectral}. Their diffusion operator is state-dependent rather than a fixed matrix, and their framework does not admit the usual higher-order eigenvalues. Consequently, their spectral approach differs from the matrix-based framework considered here. Banerjee established a Cheeger's inequality for general, including non-uniform, hypergraphs~\cite{banerjee2021spectrum}, but the resulting bounds are weaker than ours. Mulas and Zhang established a Cheeger's inequality for oriented hypergraphs~\cite{mulas2021spectral}. Chitra and Raphael established a Cheeger's inequality for a random walk on hypergraphs with edge-dependent vertex weights~\cite{chitra2019random}. Their Cheeger constant is the conductance of the associated random walk. Our analysis instead formulates the diffusion directly through the averaging operator and develops a Cheeger's inequality within the normalized Laplacian framework.

Spielman and Teng described a local method based on random walks for local clustering with bounded cluster conductance and running time based on conductance~\cite{spielman2013local}. Andersen, Chung, and Lang described a method for the same problem using personalized page rank~\cite{andersen2007using}. The quality of local clusters produced in both these approaches is guaranteed by Lov\'asz-Simonovits theory~\cite{lovasz1990mixing,lovasz1993random}. A program similar to the latter work was executed for hypergraphs by Kamal and Bagchi~\cite{kamal2024lovasz} who used personalized page rank derived from an averaging-based linear diffusion introduced by Spiro~\cite{spiro2022averaging}. Takai, Miyauchi, Ikeda, and Yoshida~\cite{takai2020hypergraph} gave a similar local clustering algorithm based on a non-linear diffusion operator, which requires solving a differential equation to compute and was thus computationally demanding.

The algorithm for $2$-graph partitioning using the eigenvector corresponding to the second eigenvalue, which we have adapted for hypergraphs, was described and analyzed by Mihail~\cite{mihail1989conductance} and Spielman and Teng~\cite{spielman1996spectral}. The area of $2$-graph partitioning is vast, and so we don't attempt to survey it here, referring the reader to the book by Trevisan~\cite{trevisan2017expanders}. Trevisan's book is also a good reference for the multiway spectral partitioning algorithm described by Lee, Gharan, and Trevisan~\cite{lee2014multiway}, which we show can easily be adapted to hypergraphs. Although hypergraph partitioning has been studied since the 1990s due to its applications in VLSI design (c.f., e.g., the work of Karypis, Aggarwal, Kumar and Shekhar~\cite{karypis1997multilevel}), mostly the partitioning methods are not spectral in nature and work on a $2$-graph transformation of the hypergraph: either the clique expansion (c.f., e.g., Zien, Schlag and Chand~\cite{zien1999multilevel}) or the star expansion which creates a new vertex for each hyperedge and connects it to all the vertices that belong to that edge (c.f, e.g., Zhou, Huang and Sch\"olkopf~\cite{zhou2006learning}.)

Several explicit constructions of $2$-graph expanders are known in the literature. Instead of listing them here, we refer the readers to the survey by Hoory, Linial, and Wigderson~\cite{hoory2006expander} and Trevisan's book~\cite{trevisan2017expanders}. To the best of our knowledge, there are only a few constructions for hypergraph expanders~\cite{feng1996spectra,song2023hypergraph}, and there has also been some work on the sparsification of hypergraphs that relies on spectral theory; recent work in this line being that of Kapralov, Krauthgamer, Tardos, and Yoshida~\cite{kapralov2021towards}. In addition, a simplicial complex is a particular case of a hypergraph that is closed under inclusion. In other words, a hypergraph $H$ is said to be a simplicial complex if $e$ is a hyperedge in $H$, then every subset of $e$ is also a hyperedge~\cite{zhang2023higher}. The simplicial complex expanders have been extensively studied in the literature and are called high-dimensional expanders (see, e.g.,~\cite{lubotzky2018high}). But there are many networks where a set of nodes forms a hyperedge, and not all proper subsets of this hyperedge are a hyperedge. For example, consider the network of international cricket teams. Each team contains exactly $11$ players, but any group of fewer than $11$ players does not form a cricket team. Therefore, the study of hypergraph expanders is still incomplete in general. In this line of research, we add one hypergraph expander class tight for the Alon-Boppana bound proved by Feng and Li~\cite{feng1996spectra}.
\section{Preliminaries}
\label{sec:prelims}

\subsection{Definitions}
\label{sec:prelims:definitions}

\paragraph{Hypergraphs and conductance(s)}
\label{sec:prelims:definitions:hypergraph}
A {\em hypergraph} $H$ is a tuple $H=(V_H, E_H, w_H)$ where $V_H$ is a set of $n$ vertices and $E_H \subseteq 2^V$ is a set of $m$ {\em hyperedges} where we do not allow hyperedges of size one (also called self-loops). For convenience, we sometimes use the notation $V(H)$ and $E(H)$ to denote the vertex and hyperedge sets of $H$, respectively. A hypergraph is called {\em covering} if every pair of distinct vertices is contained in at least one hyperedge. A hypergraph that is not covering is called a {\em non-covering} hypergraph. The weight function $w_H:E_H\longrightarrow [0,\infty)$ assigns a unique weight $w_H(e)$ to each hyperedge $e\in E_H$. The maximum cardinality of a hyperedge of $E$ is referred to as the {\em rank} of $H$, and it is denoted by $\Upsilon_H$. The minimum cardinality of a hyperedge of $E$ is referred to as the {\em co-rank} of $H$, and it is denoted by $\kappa_H$. The usual definition of graphs (without self-loops) can be thought of as hypergraphs of $\Upsilon_H = \kappa_H = 2$ and is referred to as {\em 2-graphs} in this paper. For clarity, we use the letter $G$ for 2-graphs in this paper. If all the hyperedges of $E$ have the same cardinality, $\kappa$, we say that $H$ is {\em $\kappa$-uniform}. The degree of a vertex $u$, denoted $d_H(u)$, is the sum of the weights of all hyperedges containing $u$. A hypergraph where each vertex has the same degree, $d$, is called $d$-regular. A hypergraph is said to be {\em connected} if for every $u, v \in V$, there exists a sequence of hyperedges $e_1,e_2, \ldots, e_{\ell}$ such that $u \in e_1, v\in e_{\ell}$ and $e_i \cap e_{i+1} \ne \emptyset$ for $1 \leq i < \ell$. If all hyperedges are distinct in the above sequence, we say that the sequence $e_1,e_2, \ldots, e_{\ell}$ is a path of length $\ell-2$ connecting hyperedges $e_1,~e_{\ell}$ and the hyperedges $e_1,~e_{\ell}$ have distance $\ell-2$ between them along this path. The length of the shortest path connecting hyperedges $e_1$ and $e_{\ell}$ is called the distance between the hyperedges $e_1$ and $e_{\ell}$.

The {\em volume} of $S \subseteq V_H$ is defined by $\vol_H(S)\coloneqq\sum_{v:v\in S}d_H(v)$. The {\em edge boundary} of the set $S$ is \[\partialh(S)\coloneqq\left\{e\in E:e\cap S\neq\emptyset, e\cap V_H\setminus S \neq\emptyset\right\}.\]
For clarity, when $G$ is a $2$-graph, we use $\partialg(S)$ to denote the edge boundary. The {\em conductance} of $S$ is defined by \[\phih(S)\coloneqq \frac{\sum_{e:e\in \partialh(S)}w_H(e)}{\min(\vol_H(S),\vol_H(V_H\setminus S))}.\] The conductance of the hypergraph $H$ is defined as $\phih(H)=\min_{S:\emptyset\subsetneq S\subsetneq V_H} \phih\left(S\right)$. The conductance is also referred to as the {\em Cheeger constant} in the literature (cf., e.g., Banerjee~\cite{banerjee2021spectrum}.) As before, if the graph $G$ is a 2-graph, we use $\phig(S)$ to denote the conductance of a set $S$ and $\phig(G)$ to denote the conductance of the 2-graph. 
We define a new quantity called the {\em diffusion conductance} of $S$ as
\[\newphi (S)\coloneqq \frac{\sum_{e:e\in \partialh(S)}\frac{w_H(e)}{|e|-1}|e\cap S||e\cap (V_H\setminus S)|}{\min(\vol_H(S),\vol_H(V_H\setminus S))}.\]
The diffusion conductance of $H$ is defined by $\newphi(H)=\min_{S:\emptyset\subsetneq S\subsetneq V_H} \newphi\left(S\right)$.
We note that this $\newphi(H)$ and $\phih(H)$ are different from each other, but both give the same value when $\Upsilon_H = \kappa_H = 2$, i.e., when $H$ is actually a 2-graph. That is to say, they are two different generalizations for the 2-graph conductance when we try to extend it to hypergraphs. These two quantities, while different, are closely related.
For hypergraphs with rank $\Upsilon_H$, it is easy to see that
\begin{equation}
\label{eq:banerjee-S}
   \phih(S) \leq \newphi(S) \leq  \frac{\Upsilon_H}{2} \phih(S).
\end{equation}
for all $S:\emptyset\subsetneq S\subsetneq V_H$ which implies that
\begin{equation}
\label{eq:banerjee}
   \phih(H) \leq \newphi(H) \leq  \frac{\Upsilon_H}{2} \phih(H).
\end{equation}
In Section~\ref{section:cheeger-inequality:tightness:lower}, we show that the upper bound is attained by the cube hypergraph, whereas the lower bound is attained by the hyperbicycle (see Section~\ref{section:cheeger-inequality-tightness-upper-bicycle}).

\paragraph{Algebraic definitions}
\label{sec:prelims:definitions:algebraic}
Banerjee~\cite{banerjee2021spectrum} suggested the following adjacency matrix for hypergraphs:
\begin{equation}
\label{adjacency:hypergraph}
A_H[u,v]=\sum_{e\in E_H:\{u,v\}\subseteq e}\frac{w_H(e)}{|e|-1}
\end{equation}
when $u\neq v$, and $0$ otherwise. Note that this adjacency matrix reduces to standard adjacency matrix $A_G$ for a $2$-graph $G$. Let $D_H$ be the diagonal degree matrix with $D_H[u,u]=d_H(u)$ for all $u\in V$.
Also, let the matrix $\calA_H$ be defined by
\begin{equation}
\label{diffusion:hypergraph}
\calA_H[u,v]=\sum_{e\in E_H:\{u,v\}\subseteq e}\frac{w_H(e)}{|e|}
\end{equation}
where $u\text{ and }v$ are not restricted to be distinct. Then, the {\em lazy diffusion} matrix is defined by $\calD_H=\calA_HD_H^{-1}$. Note that $\calD_H$ is a generalization of the lazy random walk in 2-graphs since $|e| = 2$ in that case. Let the Laplacian of $H$ be defined by
$L_H=D_H-A_H$. Note that the quadratic form associated with $L_H$ can be written as:
\begin{equation}
  \label{eq:lap-quad-form}
  \bfx^TL_H\bfx=\sum_{e:e\in E}\frac{w_H(e)}{|e|-1}\sum_{\{u,v\}:\{u,v\}\subseteq e}(\bfx(u)-\bfx(v))^2.
\end{equation}
Let $\evalLH_1(H) \leq \evalLH_2(H) \leq \cdots \leq \evalLH_n(H)$ be the eigenvalues of $L_H$ sorted in ascending order. From~\eqref{eq:lap-quad-form} it is clear that $L_H$ is positive semidefinite and the vector $\bm{1} = (1, 1,\ldots, 1) \in \bbR^n$ with all 1 entries is an eigenvector corresponding to $\evalLH_1(H) = 0$.
The normalized Laplacian $\calL_H$ of $H$ is defined by $\calL_H=D_H^{-1/2}L_HD_H^{-1/2}$. Note that
\[\frac{\bfx^T\calL_H\bfx}{\bfx^T\bfx}=\frac{\bfy^TL_H\bfy}{\bfy^TD_H\bfy}\]
where $\bfx=D_H^{1/2}\bfy$. We denote by $\evalNLH_1(H) \leq \evalNLH_2(H) \leq \cdots \leq \evalNLH_n(H)$ the eigenvalues of $\calL_H$ in ascending order. Let $\bfpsi_1$ be the vector such that $\bfpsi_1(u)=\sqrt{d_H(u)}$, and let $\hat{\bfpsi}_1$ be the vector such that $\hat{\bfpsi}_1(u)=d_H(u)$. Then, $\bfpsi_1$ is eigenvector of $\calL_H$ corresponding to eigenvalue $0$ and
\begin{equation}
  \label{eq:nu2}
\evalNLH_2(H)= \min_{\bfx\perp \bfpsi_1}\frac{\bfx^T\calL_H\bfx}{\bfx^T\bfx}=\min_{\bfy\perp \hat{\bfpsi}_1}\frac{\bfy^TL_H\bfy}{\bfy^TD_H\bfy}.
\end{equation}
\ignore{We replace the $h$ in the subscript with a $g$ both for $\mu$ and $\nu$ when we are dealing with a 2-graph. The use of the letter in the subscript makes the notation heavy, but it is useful when we are comparing hypergraphs and 2-graphs. The notation used in this chapter is given in Table~\ref{table:notations}.}

The notation used in this paper is given in Table~\ref{table:notations}.

\begin{table}[htbp]
    \centering
    \captionsetup{labelfont={color=black}} 
    \caption{Notations}
    \label{table:notations}
    \revision{
    \begin{tabular*}{\columnwidth}{@{\extracolsep{\fill}}p{0.25\columnwidth}p{0.7\columnwidth}}
        \toprule
        \textbf{Symbol} & \textbf{Meaning} \\
        \midrule
        $H=(V_H,E_H,w_H)$ & A hypergraph with vertex set $V_H$, hyperedge set $E_H$, and weight function $w_H:E_H\longrightarrow [0,\infty)$ assigning a unique weight to each hyperedge.\\
        $G_H=(V_{G_H},E_{G_H},w_{G_H})$ & AE $2$-graph of $H$.\\
        $\Upsilon_H$ & Rank of hypergraph $H$ representing size of the largest hyperedge of $H$.\\
        $\kappa_H$ & Co-rank of hypergraph $H$ representing size of the smallest hyperedge of $H$.\\
        $d_H(v)$ & Degree of vertex $v$ i.e. sum of the weights of all hyperedges containing vertex $v$.\\
        $\vol_H(S)$ & Volume of vertex set $S$, i.e., sum of the degrees of vertices in $S$.\\
        $\Delta(H)$ & Sum of the degrees of all vertices of $H$ i.e. $\vol_H(V_H)$.\\
        $\partial_h(S)$ & Edge boundary of vertex set $S$ i.e. number of hyperedges having non-empty intersection with both $S$ and $V_H\setminus S$.\\
        $\phih(S)$ & Conductance of vertex set $S$.\\
        $\phih(H)$ & Conductance of hypergraph $H$.\\
        $\newphi(S)$ & Diffusion conductance of vertex set $S$.\\
        $\newphi(H)$ & Diffusion conductance of hypergraph $H$.\\
        $A_H$ & Adjacency matrix of hypergraph $H$.\\
        $D_H$ &  Degree diagonal matrix of hypergraph $H$.\\
        $L_H$ & Lapalacian matrix $D_H-A_H$ of hypergraph $H$.\\
        $\calL_H$ & Normalized Laplacian $D_H^{-1/2}L_HD_H^{-1/2}$ of hypergraph $H$.\\
        $\evalLH_{\ell}$ & $\ell$-th smallest eigenvalue of Laplacian $L_H$.\\
        $\evalNLH_{\ell}$ & $\ell$-th smallest eigenvalue of normalized Laplacian $\calL_H$.\\
        $\rhoh_{\ell}$ & $\ell$-th order expansion.\\
        $\newrho_{\ell}$ & $\ell$-th order diffusion expansion.\\
        $H_{\ell,k}$ & Cube hypergraph defined in Section~\ref{section:cheeger-inequality:tightness:lower}.\\
        $C_{n,k}$ & Cycle hypergraph defined in Section~\ref{section:cheeger-inequality:tightness:upper}.\\
        $C_{2n,k}$ & Hyperbicycle defined in Section~\ref{section:cheeger-inequality-tightness-upper-bicycle}.\\
\ignore{        $I_h^{(\ell)}$ & Lov\`asz-Simonovits curve, after $\ell$-th iteration of the diffusion process on hypergraphs, defined in Section~\ref{sec:ls-theorem}.\\
        $I_g^{(\ell)}$ & Lov\`asz-Simonovits curve, after $\ell$-th iteration of the diffusion process on $2$-graphs, defined in Section~\ref{sec:ls-theorem-proof}.\\}
        \bottomrule
    \end{tabular*}}
    \vspace{-0.2in}
\end{table}

\subsection{Cheeger Inequalities for \texorpdfstring{$2$}{2}-graphs}
\label{sec:prelims:cheeger}
We state Cheeger's inequality for $2$-graphs (Alon and Milman~\cite{alon1985lambda1}; Alon~\cite{alon1986eigenvalues}). We will use this inequality for deriving Cheeger's inequality for hypergraphs in Section~\ref{section:cheeger-inequality}.
\begin{theorem}[Cheeger's inequality for $2$-graphs]
\label{thm:cheeger-ineq-graph}
If $G=(V_G,E_G.w_G)$ is a connected $2$-graph and $\evalNLG_2(G)$ is the second smallest eigenvalue of its normalized Laplacian $\calL_G$ then
\begin{equation}
  \label{eq:cheeger-graph}
  \frac{\evalNLG_2(G)}{2} \leq \phig(G) \leq \sqrt{2\evalNLG_2(G)}
\end{equation}
\end{theorem}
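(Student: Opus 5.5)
The plan is the standard two-part proof for a connected weighted $2$-graph $G$ with $d_G(v)>0$: variational upper bounds for the easy side, and a Fiedler-vector sweep-cut argument for the hard side. Throughout I use the characterization \eqref{eq:nu2} specialised to $2$-graphs,
\[
\evalNLG_2(G)=\min_{\bfy\perp \hat{\bfpsi}_1}\frac{\sum_{\{u,v\}\in E_G}w_G(u,v)(\bfy(u)-\bfy(v))^2}{\sum_u d_G(u)\bfy(u)^2},
\]
where the constraint is $\bfy\perp\hat{\bfpsi}_1$, i.e.\ $\sum_u d_G(u)\bfy(u)=0$.

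\textbf{Lower bound $\evalNLG_2(G)/2\le \phig(G)$.} Fix a set $S$ attaining $\phig(G)$ and put $\bar S=V_G\setminus S$. Take the test vector $\bfy=\bm{1}_S/\vol_G(S)-\bm{1}_{\bar S}/\vol_G(\bar S)$, which satisfies $\sum_u d_G(u)\bfy(u)=0$.
\begin{itemize}
\item The numerator equals $w(\partialg(S))\bigl(1/\vol_G(S)+1/\vol_G(\bar S)\bigr)^2$.
\item The denominator equals $1/\vol_G(S)+1/\vol_G(\bar S)$.
\item The Rayleigh quotient is therefore $w(\partialg(S))\bigl(1/\vol_G(S)+1/\vol_G(\bar S)\bigr)\le 2w(\partialg(S))/\min(\vol_G(S),\vol_G(\bar S))=2\phig(S)$.
\end{itemize}
This gives $\evalNLG_2(G)\le 2\phig(G)$.

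\textbf{Upper bound $\phig(G)\le\sqrt{2\evalNLG_2(G)}$.} Let $\bfy$ be a minimizer, so $\bfy=D_G^{-1/2}\bfx$ for the second eigenvector $\bfx$ of $\calL_G$.
\begin{enumerate}
\item Shift $\bfy$ by a constant so that its median with respect to volume is $0$: both $\{\bfy>0\}$ and $\{\bfy<0\}$ then have volume at most $\vol_G(V_G)/2$. Because $\bfy\perp\hat{\bfpsi}_1$, the shift does not lower the Rayleigh quotient; the numerator is unchanged and $\sum d\,(\bfy-c)^2\ge\sum d\,\bfy^2$.
\item Split $\bfy=\bfy^+-\bfy^-$. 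Since $(a-b)^2\ge(a^+-b^+)^2+(a^--b^-)^2$ edgewise, one of $\bfy^{\pm}$, call it $\bm z\ge0$, has $R(\bm z)\le\evalNLG_2(G)$. Its support has volume at most half the total.
\item Scale $\bm z$ to have maximum $1$, and pick $t\in(0,1]$ at random with density making $\Pr[\bm z(u)^2\ge t]=\bm z(u)^2$ (that is, $t$ uniform). Let $S_t=\{u:\bm z(u)^2\ge t\}$. Then $\mathbb E[\vol_G(S_t)]=\sum_u d_G(u)\bm z(u)^2$.
\item Each edge $\{u,v\}$ is cut with probability $|\bm z(u)^2-\bm z(v)^2|$. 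Hence
\[
\mathbb E[w(\partialg(S_t))]=\sum w_G(u,v)|\bm z(u)-\bm z(v)|(\bm z(u)+\bm z(v)).
\]
By Cauchy--Schwarz this is at most $\sqrt{\sum w_G(\bm z(u)-\bm z(v))^2}\,\sqrt{2\sum_u d_G(u)\bm z(u)^2}$, using $(a+b)^2\le 2a^2+2b^2$ and summing edge weights into degrees.
\item Taking the ratio of expectations gives $\mathbb E[w(\partialg(S_t))]\le\sqrt{2R(\bm z)}\,\mathbb E[\vol_G(S_t)]$. So some $t$ yields $\phig(S_t)\le\sqrt{2\evalNLG_2(G)}$, and since $S_t$ lies in the small side, $\min(\vol_G(S_t),\vol_G(V_G\setminus S_t))=\vol_G(S_t)$.
\end{enumerate}

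The main obstacle is the bookkeeping in the sweep argument. The median shift is needed so that $S_t$ is the smaller side, and the $\pm$ truncation needs the edgewise inequality to keep $R(\bm z)\le\evalNLG_2(G)$. The constant $\sqrt2$ comes precisely from the $(a+b)^2\le2(a^2+b^2)$ step; that step should be written out with care, while the remaining steps are routine.
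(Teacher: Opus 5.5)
Your argument is correct; it is the standard proof. The paper itself does not prove this statement: Theorem~\ref{thm:cheeger-ineq-graph} is quoted from Alon--Milman and Alon and used as a black box through the AE graph. There is one wording slip. In step~1 the shift does not \emph{raise} the Rayleigh quotient. Your numerator/denominator comparison shows exactly this, and it is what you need.

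The closest argument the paper writes out is the proof in Section~\ref{thm:cheeger-ineq-upper-bound-proof} of the sharper bound $\newphi(H)\le\sqrt{(2-\nu_2(H))\nu_2(H)}$, which also covers non-complete $2$-graphs. It differs from yours in three places.

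First, the truncation. You use a volume-median shift followed by the edgewise $\pm$ splitting. The paper instead truncates the eigenvector itself to $S^+=\{\bfx>0\}$ and uses the pointwise eigenvalue equation: multiply the $u$-th row by $d_H(u)\bfx(u)$, sum over $S^+$, and use $\bfx(v)\le 0$ for $v\notin S^+$. This gives $Q\le\nu_2(H)$ for the truncated vector (the paper's $\bfy$), and the volume condition is met by passing to $-\bfx$ if needed. That avoids the shift but needs a genuine eigenvector. Your route works for any $\bfy\perp\hat{\bfpsi}_1$ with small Rayleigh quotient, e.g.\ an approximate eigenvector.

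Second, the Cauchy--Schwarz step. You bound $\sum w_G(u,v)(\bm{z}(u)+\bm{z}(v))^2\le 2\sum_u d_G(u)\bm{z}(u)^2$. The paper evaluates this sum exactly as $(2-Q)\sum_u d_H(u)\bfy(u)^2$ and obtains $Q(2-Q)\ge\newphi(H)^2$. Replacing $Q$ by $\nu_2(H)$ then uses that $x(2-x)$ is increasing on $[0,1]$, i.e.\ that $\nu_2(H)\le 1$; this is exactly where the non-covering hypothesis enters. Your cruder estimate gives up the factor $2-\nu_2$ but needs no such hypothesis. That is why it delivers the stated $\sqrt{2\nu_2(G)}$ for every connected graph, complete graphs included.

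Third, the sweep. Your uniformly random threshold on $\bm{z}^2$ is the probabilistic form of the paper's deterministic layer-cake summation over the level sets $S_i$; both rest on the same co-area identity. Yours has the small advantage of directly exhibiting a sweep set with $\phig(S_t)\le\sqrt{2\nu_2(G)}$. That is the guarantee the paper invokes for Fiedler's algorithm in Section~\ref{sec:algorithmic:fiedler-algo}.
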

Lee, Gharan, and Trevisan have shown a family of results known as higher-order Cheeger inequalities~\cite{lee2014multiway}. To state these, we first define the higher-order expansion, an extension of conductance.

Given a hypergraph $H=(V_H,E_H,w_H)$ and $1 \leq \ell \leq n$\ignore{\abcomment{RK, please check}}, let $\emptyset\subsetneq S_1,~S_2,~\ldots,~S_{\ell}\subsetneq V_H$ be a collection of ${\ell}$ disjoint subsets of vertices. Then, we define the {\em order-${\ell}$ expansion} of this collection as
\[\rhoh_{\ell}(S_1,~S_2,~\ldots,~S_{\ell})=\underset{i\in\{1,2,\ldots,{\ell}\}}{\max}\phih(S_i),\]
and the order-${\ell}$ expansion of $H$ is defined by $\rhoh_{\ell}(H)=\underset{T_1,T_2,\ldots,T_{\ell}}{\min}\rhoh_{\ell}(T_1,T_2,\ldots,T_{\ell})$ where $\emptyset\subsetneq T_1,T_2,\ldots,T_{\ell}\subsetneq V_H$ are disjoint subsets of vertices. As before, when we know that the graph $G$ is a 2-graph, we will replace the $h$ in the subscript with a $g$ to get $\rhog_{\ell}(G)$.
We will also define the {\em order-$\ell$ diffusion expansion} in exactly the same way with $\newphi$ used in place of $\phih$. The notation for this will be $\newrho_{\ell}(H)$. For hypergraphs with rank $\Upsilon_H$, Eq.~\eqref{eq:banerjee} implies that
\begin{equation}
\label{eq:trevisan}
   \rhoh_{\ell}(H) \leq \newrho_{\ell}(H) \leq  \frac{\Upsilon_H}{2} \rhoh_{\ell}(H).
\end{equation}
Lee, Gharan, and Trevisan proved the following higher-order Cheeger's inequality for $2$-graphs~\cite{lee2014multiway}:
\begin{theorem}[Higher Order Cheeger's inequality for $2$-graphs~\cite{lee2014multiway}]
\label{thm:cheeger-ineq-graph-higher}
If $G=(V_G,E_G,w_G)$ is any $2$-graph and $\evalNLG_{\ell}(G)$ is the ${\ell}$-th smallest eigenvalue of its normalized Laplacian $\calL_G$ then
\begin{equation}
  \label{eq:cheeger-graph-high-order}
  \frac{\evalNLG_{\ell}(G)}{2} \leq \rhog_{\ell}(G) \leq \calO({\ell}^2)\sqrt{\evalNLG_{\ell}(G)}
\end{equation}
\end{theorem}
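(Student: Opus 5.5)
We treat the two inequalities separately. The left one follows from the Courant--Fischer characterisation of $\evalNLG_{\ell}(G)$. The right one is the Lee--Gharan--Trevisan argument: a spectral embedding, a localisation step, and a Cheeger sweep.

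\emph{Lower bound.} Let $S_1,\dots,S_{\ell}$ be disjoint sets attaining $\rhog_{\ell}(G)$, and set $\bfy_i=\bm{1}_{S_i}$. These vectors have disjoint supports, so they are pairwise $D_G$-orthogonal and span an $\ell$-dimensional space $W$. By Courant--Fischer in the form of \eqref{eq:nu2},
\[
\evalNLG_{\ell}(G)\le \max_{\bfy\in W\setminus\{0\}} \frac{\bfy^TL_G\bfy}{\bfy^TD_G\bfy}.
\]
Take $\bfy=\sum_i c_i\bfy_i$. The denominator equals $\sum_i c_i^2\vol_G(S_i)$. In the numerator, an edge between $S_i$ and $S_j$ contributes $(c_i-c_j)^2\le 2(c_i^2+c_j^2)$, and an edge from $S_i$ to $V_G\setminus\bigcup_j S_j$ contributes $c_i^2$. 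Charging each edge to its endpoints' sets gives
\[
\bfy^TL_G\bfy\le 2\sum_i c_i^2\, w_G(\partialg(S_i)).
\]
The quotient is therefore at most $2\max_i w_G(\partialg(S_i))/\vol_G(S_i)$. This is at most $2\max_i\phig(S_i)$, because the conductance uses $\min(\vol_G(S_i),\vol_G(V_G\setminus S_i))\le \vol_G(S_i)$ in its denominator. Hence $\evalNLG_{\ell}(G)\le 2\rhog_{\ell}(G)$.

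\emph{Upper bound.} Let $\bfy_1,\dots,\bfy_{\ell}$ be $D_G$-orthonormal eigenvectors of the generalised problem $L_G\bfy=\evalNLG D_G\bfy$ for the $\ell$ smallest eigenvalues. Define the spectral embedding $F(u)=(\bfy_1(u),\dots,\bfy_{\ell}(u))\in\bbR^{\ell}$. Its energy satisfies
\[
\sum_{\{u,v\}\in E_G}w_G(uv)\,\|F(u)-F(v)\|^2\le \ell\,\evalNLG_{\ell}(G),\qquad \sum_u d_G(u)\|F(u)\|^2=\ell .
\]
The first step is the \emph{isotropy/spreading} property: orthonormality implies that no small-radius cone, in the radial projection distance $\|F(u)/\|F(u)\|-F(v)/\|F(v)\|\|$, carries more than roughly a $1/\ell$-fraction of the total mass $\sum d_G(u)\|F(u)\|^2$.

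Using this, we partition the unit sphere randomly into regions of small radius, for instance by random hyperplane or Gaussian partitioning. Merging regions yields $\ell$ disjoint sets $A_1,\dots,A_{\ell}$, each carrying an $\Omega(1/\ell)$ fraction of the mass and pairwise separated by $\Omega(1/\ell)$ in radial distance. Around each $A_i$ we build a smooth localised function
\[
\psi_i(u)=\|F(u)\|\cdot\max\bigl(0,\,1-\tfrac{\ell}{\delta}\,\mathrm{dist}(u,A_i)\bigr).
\]
The separation guarantees that the $\psi_i$ have disjoint supports. A Lipschitz estimate on the radial projection then gives each $\psi_i$ a Rayleigh quotient of $O(\ell^{4})\evalNLG_{\ell}(G)$, with powers of $\ell$ coming from the padding parameter and from the mass loss. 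Finally we apply the sweep-cut argument from the proof of Theorem~\ref{thm:cheeger-ineq-graph} to each $\psi_i$ separately. Since the supports are disjoint, the level sets $S_i\subseteq\mathrm{supp}(\psi_i)$ are disjoint, and each has
\[
\phig(S_i)\le\sqrt{2\cdot O(\ell^4)\,\evalNLG_{\ell}(G)}=O(\ell^2)\sqrt{\evalNLG_{\ell}(G)}.
\]

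\emph{Main obstacle.} The main difficulty is the localisation step: proving that the radial projection of $F$ can be split into $\ell$ well-separated, heavy pieces while losing only polynomially many factors of $\ell$. The plan is to first show the spreading lemma, that every ball of radial radius $\Delta$ has mass at most $\frac{1}{\ell(1-\Delta^2)}$, using orthonormality. We then use a random partition with padding probability bounded below, and an averaging argument to select $\ell$ disjoint heavy unions. Tracking the constants through this step is where the $\ell^2$ factor arises.
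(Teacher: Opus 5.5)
Your lower bound is correct and is the standard argument: disjointly supported indicators, Courant--Fischer, the charging $(c_i-c_j)^2\le 2(c_i^2+c_j^2)$, and $w_G(\partialg(S_i))/\vol_G(S_i)\le\phig(S_i)$. The paper itself gives no proof of this theorem; it imports it from Lee, Oveis Gharan and Trevisan.

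The upper bound, however, has a genuine gap. The one step with real content is producing $\ell$ disjoint, heavy, well-separated regions, and you assert it with parameters that the tools you name do not give. The spreading lemma bounds the mass of a piece of radial diameter $\Delta$ only by $1/(1-\Delta^2)>1$, out of a total of $\ell$. So merging pieces into $\ell$ (not fewer) heavy groups needs $\Delta^2=O(1/\ell)$ and a total padding loss of $O(1)$. Roughly, each vertex must be padded with probability $1-O(1/\ell)$. Take a random partition of $\bbR^{\ell}$ that cuts a radius-$r$ ball with probability $O(\sqrt{\ell})\,r/\Delta$. It achieves this failure probability only at $r=\Theta(\Delta/\ell^{3/2})=\Theta(\ell^{-2})$, not at the $\Omega(1/\ell)$ separation you claim.

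As a sanity check, plug your stated parameters (mass $\Omega(1)$, separation $\Omega(1/\ell)$) into your own Lipschitz estimate. You get a Rayleigh quotient of $O(\ell^{3})\nu_\ell$, hence $O(\ell^{3/2})\sqrt{\nu_\ell}$, which is better than the bound you are trying to prove. The $O(\ell^4)$ you quote is not derived from anything in the sketch.

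Now use the separation that is actually available, $r\approx\ell^{-2}$. Write $\mathcal{E}(g)=\sum_{uv\in E_G}w_G(uv)(g(u)-g(v))^2$, with $\mathcal{E}(F)$ defined analogously with norms. The crude per-function bound $\mathcal{E}(\psi_i)\le(1+2/r)^2\mathcal{E}(F)$ then yields only $O(\ell^5)\nu_\ell$, i.e.\ $O(\ell^{5/2})\sqrt{\nu_\ell}$.

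Reaching $\ell^2$ needs one more idea. One way is to average over the random partition:
\begin{itemize}
\item An edge $uv$ pays the $1/r^2$ penalty only when the ball of radius $r+d_F(u,v)$ around the radial projection of $F(u)$ is cut. This happens with probability $O(\sqrt{\ell}\,(r+d_F(u,v))/\Delta)$.
\item Combine this with $\|F(u)\|\,d_F(u,v)\le 2\|F(u)-F(v)\|$. In expectation, $\sum_i\mathcal{E}(\psi_i)=O\bigl(\sqrt{\ell}/(r\Delta)\bigr)\mathcal{E}(F)=O(\ell^4)\nu_\ell$.
\item Fix a partition achieving both this energy bound and the mass bound (by Markov). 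Each $\psi_i$ has mass $\Omega(1)$, so each has Rayleigh quotient $O(\ell^4)\nu_\ell$.
\end{itemize}

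Finally, the sweep cut controls $w_G(\partialg(S_i))/\vol_G(S_i)$, not the paper's $\phig(S_i)$, whose denominator is $\min(\vol_G(S_i),\vol_G(V_G\setminus S_i))$.
\begin{itemize}
\item For $\ell\ge2$, at most one $S_i$ can have volume above $\vol_G(V_G)/2$. Replace it by $V_G\setminus\bigcup_{j\ne i}S_j$, whose boundary lies in $\bigcup_{j\ne i}\partialg(S_j)$; this set then has conductance at most $\max_{j\ne i}\phig(S_j)$.
\item For $\ell=1$, your construction returns $V_G$, which the paper's definition excludes. As stated there, the upper bound would force $\phig(G)=0$ for every connected $G$, so you must assume $\ell\ge2$.
\end{itemize}
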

\subsection{An Algebraically Equivalent \texorpdfstring{$2$}{2}-Graph for Hypergraphs}
\label{sec:prelims:ae-graph}
When vertex weights are independent of the incident hyperedges, the weight of a vertex remains the same across all hyperedges containing it. Chitra and Raphael showed that, under this condition, a random walk on a hypergraph is equivalent to a random walk on an appropriately weighted clique expansion~\cite{chitra2019random}. For completeness, we explicitly formulate this equivalence for averaging-based diffusion and use it to establish a Cheeger's inequality for hypergraphs.

In particular, for any hypergraph $H$, it is possible to construct a weighted $2$-graph $G_H$ that is algebraically equivalent to $H$, in the sense that $H$ and $G_H$ have exactly the same adjacency matrices. We will refer to $G_H$ as the {\em algebraically equivalent 2-graph of $H$}, or, for brevity, the {\em AE graph of $H$}.
\begin{lemma}\label{lemma:graph-equivalence}
Given a hypergraph $H=(V_H,E_H,w_H)$, we can construct a $2$-graph $G_H=(V_{G_H},E_{G_H},w_{G_H})$ such that hypergraph $H$ and $2$-graph $G_H$ have identical adjacency, Laplacian, and normalized Laplacian matrices.
\end{lemma}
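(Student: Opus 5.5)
The construction is the weighted clique expansion. Set $V_{G_H}=V_H$, and put an edge $\{u,v\}$ in $E_{G_H}$ exactly when some hyperedge of $H$ contains both $u$ and $v$. Give it the weight
\[
w_{G_H}(\{u,v\})=\sum_{e\in E_H:\{u,v\}\subseteq e}\frac{w_H(e)}{|e|-1}.
\]
By definition~\eqref{adjacency:hypergraph}, this weight equals $A_H[u,v]$ for every $u\neq v$. Since $G_H$ has no self-loops, its diagonal adjacency entries are $0$, as are those of $A_H$. So $A_{G_H}=A_H$ follows immediately from the definitions.

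The substantive step is to show that the degree matrices agree. That is, we need $d_{G_H}(u)=d_H(u)$ for every vertex $u$. Since $L=D-A$ and $\calL=D^{-1/2}LD^{-1/2}$, this is exactly what is needed to carry the equality over to the Laplacian and the normalized Laplacian. We compute the degree by exchanging the order of summation:
\[
d_{G_H}(u)=\sum_{v\neq u}w_{G_H}(\{u,v\})=\sum_{e\ni u}\frac{w_H(e)}{|e|-1}\,\bigl|\{v\in e: v\neq u\}\bigr|=\sum_{e\ni u}\frac{w_H(e)}{|e|-1}(|e|-1)=\sum_{e\ni u}w_H(e)=d_H(u).
\]
This is where the normalization by $|e|-1$ in Banerjee's matrix is essential: each hyperedge $e$ containing $u$ contributes exactly $|e|-1$ pairs $\{u,v\}$, and the factor $1/(|e|-1)$ cancels that count. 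Hence $D_{G_H}=D_H$.

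Combining the two equalities gives $L_{G_H}=D_{G_H}-A_{G_H}=D_H-A_H=L_H$, and likewise $\calL_{G_H}=\calL_H$. Two technical points need care. First, if some vertex has degree zero, $D^{-1/2}$ is undefined; in that case we use the same convention for both $H$ and $G_H$, or we assume, as is implicit in the definition of $\calL_H$, that every degree is positive. Second, a pair $\{u,v\}$ may lie in several hyperedges; the summed weight above handles this, so $G_H$ is a simple weighted $2$-graph rather than a multigraph. As a consistency check, we can also confirm via~\eqref{eq:lap-quad-form} that the quadratic form of $L_{G_H}$, namely $\sum_{\{u,v\}}w_{G_H}(\{u,v\})(\bfx(u)-\bfx(v))^2$, regroups into the hypergraph expression. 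The main obstacle is the degree identity, and it reduces to the counting of $|e|-1$ pairs shown above.
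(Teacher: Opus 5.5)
Your proof is correct and uses the same construction as the paper: the weighted clique expansion in which each hyperedge $e$ contributes weight $\frac{w_H(e)}{|e|-1}$ to every pair it contains (summing these weights is equivalent to the paper's multi-edges), which gives $A_{G_H}=A_H$. You also explicitly verify the degree identity $d_{G_H}(u)=\sum_{e\ni u}w_H(e)=d_H(u)$; the paper leaves this step implicit when it passes from equal adjacency matrices to equal Laplacian and normalized Laplacian matrices, so your version is slightly more complete.
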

\begin{proof}
    Let $G_H$ be the $2$-graph obtained from hypergraph $H$ as follows. The vertex set of $G_H$ is the same as that of $H$, i.e., $V_{G_H}=V_H$. Each hyperedge $e$ of $H$ is replaced by a clique connecting all the vertices of $e$ with all edge weights $\frac{w_H(e)}{|e|-1}$. Let $u,v\in V_H$ be any two distinct vertices. For every hyperedge $e$ containing vertices $u$ and $v$ in $H$, an edge with weight $\frac{w_H(e)}{|e|-1}$ connects these vertices in $G_H$. Therefore, the adjacency matrix $A_{G_H}$ of the $2$-graph $G_H$ is given by
\[A_{G_H}[u,v]=\sum_{e:\{u,v\}\subseteq e}\frac{w_H(e)}{|e|-1}\]
Since any vertex pair may be contained in multiple hyperedges in $H$. So, we allow multiple edges in $G_H$ to connect the same vertex pair (equivalently, we just increase the weight of the edge connecting this pair by adding the weights coming from all the hyperedges containing this pair\ignore{\abcomment{RK, please check.})}.
Using this with Eq.~\eqref{adjacency:hypergraph}, we get $A_{G_H}[u,v]=A_H[u,v]$. Therefore, the adjacency matrices of $H$ and $G_H$ are identical. It implies that $H$ and $G_H$ have identical Laplacian and normalized Laplacian matrices.
\end{proof}
Note that the AE-graph of $H$ is simply a weighted version of the clique expansion of $H$. Next, we discuss the key property of the AE-graph of $H$: its conductance $\phig(G_H)$ is equal to the diffusion conductance $\newphi(H)$ of $H$. 
\begin{lemma}\label{lemma:conductance-equivalence}
    Given a hypergraph $H=(V_H,E_H,w_H)$, let $G_H$ be the AE-graph of $H$. If $T:\emptyset\subsetneq T\subsetneq V_H$ is any subset of vertices, then $\phig(T)=\newphi(T)$.
\end{lemma}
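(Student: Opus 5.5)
The plan is to compare numerators and denominators separately: show that the two volumes agree, then show that the two boundary weights agree. Since both $\phig(T)$ and $\newphi(T)$ are ratios of a boundary weight to $\min(\vol(T),\vol(V\setminus T))$, equality of both parts gives the claim.

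\emph{Degrees and volumes.} In $G_H$ the degree of $u$ is the sum of the weights of edges at $u$:
\[d_{G_H}(u)=\sum_{v\neq u}A_{G_H}[u,v].\]
By Lemma~\ref{lemma:graph-equivalence}, $A_{G_H}=A_H$, so
\[d_{G_H}(u)=\sum_{e\ni u}\frac{w_H(e)}{|e|-1}\,(|e|-1)=d_H(u).\]
The factor $|e|-1$ counts the other vertices of $e$, which are exactly the neighbours of $u$ in the clique replacing $e$. Summing over $T$ and over $V_H\setminus T$ gives $\vol_{G_H}(T)=\vol_H(T)$ and $\vol_{G_H}(V_H\setminus T)=\vol_H(V_H\setminus T)$. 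The denominators therefore coincide. This step is consistent with $G_H$ and $H$ sharing the same $D$, which is already implicit in their having the same Laplacian.

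\emph{Boundary weight.} The total weight of $\partialg(T)$ in $G_H$ is $\sum_{u\in T,\,v\notin T}A_{G_H}[u,v]$. Substituting the definition of $A_H$ and exchanging the order of summation gives
\[\sum_{e\in E_H}\frac{w_H(e)}{|e|-1}\,\bigl|\{(u,v):u\in e\cap T,\ v\in e\cap(V_H\setminus T)\}\bigr| = \sum_{e\in E_H}\frac{w_H(e)}{|e|-1}\,|e\cap T|\,|e\cap (V_H\setminus T)|.\]
A hyperedge not in $\partialh(T)$ has one of the two intersections empty, so it contributes zero. The sum therefore restricts to $e\in\partialh(T)$, which is exactly the numerator of $\newphi(T)$.

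The only real care needed is the bookkeeping around multi-edges. Edges of $G_H$ arising from different hyperedges on the same vertex pair must each be counted, which the additive definition of $A_{G_H}$ handles. One must also check that each crossing pair inside a clique is counted once, since it is unordered with one endpoint on each side. With both of these settled, dividing the equal numerators by the equal denominators gives $\phig(T)=\newphi(T)$.
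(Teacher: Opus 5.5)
Your proof is correct and is essentially the paper's argument. The paper's one-line computation counts, for each boundary hyperedge $e$, the $|e\cap T|\,|e\cap(V_H\setminus T)|$ crossing clique edges of weight $\frac{w_H(e)}{|e|-1}$, exactly as your exchange of summation does; the only difference is that you explicitly verify $d_{G_H}(u)=d_H(u)$ (hence equal volumes), which the paper uses silently when it replaces $\vol_{G_H}$ by $\vol_H$ in the denominator.
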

\begin{proof} We have
\[
    \phig(T) =\frac{\sum_{e:e\in\partial_h(T)}\sum_{\{u,v\}\in\partial_g(T):\{u,v\}\subseteq e}\frac{w_H(e)}{|e|-1}}{\min(\vol_{G_H}(T),\vol_{G_H}(V_{G_H}\setminus T))} =\frac{\sum_{e:e\in \partialh(T)}\frac{w_H(e)}{|e|-1}|e\cap T||e\cap (V_H\setminus T)|}{\min(\vol_H(T),\vol_H(V_H\setminus T))} =\newphi(T)
\]
\end{proof}
This property naturally transfers to the order-$\ell$ expansion of $H$ 
\begin{lemma}\label{lemma:high-order-conductance-equivalence}
    Given a hypergraph $H=(V_H,E_H,w_H)$, let $G_H$ be the AE-graph of $H$.  If $\emptyset\subsetneq S_1,~S_2,~\ldots,~S_\ell\subsetneq V_H$ are disjoint subsets of vertices then, for $1 \leq \ell \leq n$\ignore{\abcomment{RK, please check}} $$\rhog_\ell(S_1,~S_2,~\ldots,~S_\ell)=\newrho_\ell(S_1,~S_2,~\ldots,~S_\ell).$$
\end{lemma}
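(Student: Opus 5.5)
This is a direct consequence of Lemma~\ref{lemma:conductance-equivalence}, applied to each set in the collection and then combined through the definition of order-$\ell$ expansion. I would proceed as follows.

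First, unfold the two definitions. By definition,
\[
\rhog_\ell(S_1,\ldots,S_\ell)=\max_{i\in\{1,\ldots,\ell\}}\phig(S_i),
\qquad
\newrho_\ell(S_1,\ldots,S_\ell)=\max_{i\in\{1,\ldots,\ell\}}\newphi(S_i).
\]
Here $\phig$ is computed in the AE-graph $G_H$. The vertex sets coincide, since $V_{G_H}=V_H$, so each $S_i$ is a legitimate nonempty proper subset of $V_{G_H}$. Hence the left-hand side is well defined for the same collection.

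Second, for each fixed $i$, invoke Lemma~\ref{lemma:conductance-equivalence} with $T=S_i$, which gives $\phig(S_i)=\newphi(S_i)$. Disjointness of the $S_i$ plays no role in this step; it matters only to ensure the collection is admissible in both definitions.

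Third, the maximum of two families that agree term by term is the same, so the two order-$\ell$ expansions of the collection are equal. This holds for every $1\le \ell\le n$.

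I do not expect a real obstacle. The only point to check is that the volumes in the denominators agree: $\vol_{G_H}(T)=\vol_H(T)$. This was used implicitly in Lemma~\ref{lemma:conductance-equivalence}. It follows because the weighted degree of $u$ in $G_H$ is $\sum_{e\ni u}(|e|-1)\cdot\frac{w_H(e)}{|e|-1}=d_H(u)$, which is equivalently the statement that $D_{G_H}=D_H$ from Lemma~\ref{lemma:graph-equivalence}.

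As a remark, taking the minimum over all admissible collections then also yields $\rhog_\ell(G_H)=\newrho_\ell(H)$.
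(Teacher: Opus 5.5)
Your proposal is correct and matches the paper's proof, which simply states that the result follows from Lemma~\ref{lemma:conductance-equivalence}; that is exactly what you do, applying it to each $S_i$ and then taking the maximum. Your extra check that $\vol_{G_H}=\vol_H$ fills in a detail the paper leaves implicit: each hyperedge $e\ni u$ contributes $(|e|-1)\cdot\frac{w_H(e)}{|e|-1}=w_H(e)$ to the degree of $u$ in $G_H$.
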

The proof easily follows from Lemma~\ref{lemma:conductance-equivalence}.

\section{A Cheeger's Inequality for Hypergraphs}
\label{section:cheeger-inequality}

In this section, we establish Cheeger inequalities for hypergraphs. Chung proved $\phig(G) \leq 2\sqrt{(2-\evalNLH_2(G))\evalNLH_2(G)}$ for a general graph $G$~\cite{chung1996laplacians}. Banerjee extended this type of Cheeger's inequality to general hypergraphs, obtaining $\phih(H) \leq (\Upsilon_H-1)\sqrt{(2-\evalNLH_2(H))\evalNLH_2(H)}$ for a connected hypergraph $H$~\cite{banerjee2021spectrum}. We first improve Banerjee's bound to $\phih(H) \leq \sqrt{2\evalNLH_2(H)}$. We then use a spectral characterization of non-covering hypergraphs to derive a further-improved Cheeger's inequality relating the second-smallest eigenvalue of the normalized Laplacian to the conductance of non-covering hypergraphs. We show in Section~\ref{section:cheeger-inequality:tightness} that this inequality is tight.

\begin{theorem}[Cheeger's inequality for hypergraphs]
\label{thm:cheeger-ineq-general}
Let $H$ be a connected hypergraph and let $\evalNLH_2(H)$ be the second smallest eigenvalue of its normalized Laplacian $\calL_H$. Then, we have
\begin{equation}
  \label{eq:cheeger}
  \frac{\evalNLH_2(H)}{\Upsilon_H} \leq \phih(H) \leq \sqrt{2\evalNLH_2(H)}.
\end{equation}
where $\Upsilon_H$ is rank of $H$.
\end{theorem}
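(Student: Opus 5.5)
The plan is to reduce everything to the AE graph $G_H$ of Lemma~\ref{lemma:graph-equivalence} and then pass between $\phig(G_H)=\newphi(H)$ and $\phih(H)$ using Eq.~\eqref{eq:banerjee}. Since $H$ is connected, $G_H$ is connected: every hyperedge becomes a clique, so the hyperedge chains connecting vertices in $H$ give paths in $G_H$. By Lemma~\ref{lemma:graph-equivalence}, $\calL_{G_H}=\calL_H$, hence $\evalNLG_2(G_H)=\evalNLH_2(H)$. By Lemma~\ref{lemma:conductance-equivalence}, taking the minimum over all $T$ gives $\phig(G_H)=\newphi(H)$. Theorem~\ref{thm:cheeger-ineq-graph} applied to $G_H$ then yields
\[
\frac{\evalNLH_2(H)}{2}\le \newphi(H)\le \sqrt{2\evalNLH_2(H)}.
\]

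For the upper bound, I combine this with the left inequality of Eq.~\eqref{eq:banerjee}, $\phih(H)\le\newphi(H)$. This immediately gives $\phih(H)\le\sqrt{2\evalNLH_2(H)}$.

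For the lower bound, I use the right inequality of Eq.~\eqref{eq:banerjee}, $\newphi(H)\le\frac{\Upsilon_H}{2}\phih(H)$. Combined with the display above, this gives
\[
\frac{\evalNLH_2(H)}{2}\le\frac{\Upsilon_H}{2}\phih(H),
\]
which rearranges to $\evalNLH_2(H)/\Upsilon_H\le\phih(H)$.

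The only step that needs any care is Eq.~\eqref{eq:banerjee}, which the paper states as "easy to see." I would verify it per set $S$ and per boundary hyperedge $e$. Write $a=|e\cap S|\ge1$ and $b=|e\setminus S|\ge1$, so that $a+b=|e|$. The contribution of $e$ to the numerator of $\newphi(S)$ is $w_H(e)\,ab/(a+b-1)$, while its contribution to the numerator of $\phih(S)$ is $w_H(e)$.
\begin{itemize}
\item For the lower inequality, $ab\ge a+b-1$ is equivalent to $(a-1)(b-1)\ge0$, which holds since $a,b\ge1$.
\item For the upper inequality, I need $ab/(a+b-1)\le |e|/2\le\Upsilon_H/2$. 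This is equivalent to $2ab\le (a+b)(a+b-1)$, i.e. $a^2+b^2\ge a+b$, which holds for positive integers.
\end{itemize}
The denominators of $\phih(S)$ and $\newphi(S)$ coincide, so summing over boundary hyperedges gives Eq.~\eqref{eq:banerjee-S}. Taking minima over $S$ gives Eq.~\eqref{eq:banerjee}.

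I do not expect a real obstacle. The whole proof is a transfer through the AE graph, and the one point that must be checked is that connectivity of $H$ carries over to $G_H$, which is what makes Theorem~\ref{thm:cheeger-ineq-graph} applicable.
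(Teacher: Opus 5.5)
Your proposal is correct and follows the paper's proof: you pass to the AE graph $G_H$ via Lemmas~\ref{lemma:graph-equivalence} and~\ref{lemma:conductance-equivalence}, apply Theorem~\ref{thm:cheeger-ineq-graph} to get $\nu_2(H)/2\le\hat{\phi}_h(H)\le\sqrt{2\nu_2(H)}$, and convert to $\phi_h(H)$ using the two sides of Eq.~\eqref{eq:banerjee}. Your per-hyperedge check of Eq.~\eqref{eq:banerjee} (via $(a-1)(b-1)\ge 0$ and $a^2+b^2\ge a+b$) and your argument that connectivity of $H$ carries over to $G_H$ fill in details that the paper leaves implicit.
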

\begin{proof}The Cheeger's Inequality \eqref{eq:cheeger-graph} for $2$-graphs and Lemma~\ref{lemma:graph-equivalence} imply that
\begin{equation}
  \label{eq:cheeger-ineq-2-graph}
  \frac{\evalNLH_2(H)}{2} \leq \phig(G_H) \leq \sqrt{2\evalNLH_2(H)}
\end{equation}
where $G_H$ is the AE graph of $H$. Lemma~\ref{lemma:conductance-equivalence} implies that $\phig(G_H)=\newphi(H)$. Therefore, we have
\begin{equation}
  \label{eq:cheeger-ineq-2-graph-diffusion}
  \frac{\evalNLH_2(H)}{2} \leq \newphi(H) \leq \sqrt{2\evalNLH_2(H)}
\end{equation}

Using Eq.~(\ref{eq:banerjee}), it implies the following
\[\frac{\evalNLH_2(H)}{\Upsilon_H} \leq \phih(H) \leq \sqrt{2\evalNLH_2(H)}\]
which proves the result.
\end{proof}

Chan, Louis, Tang, and Zhang established the Cheeger's inequality $\frac{\gamma_2}{2}\leq\phih(H)\leq\sqrt{2\gamma_2}$, where $\gamma_2$ is the second eigenvalue associated with their nonlinear hypergraph diffusion operator~\cite{chan2018spectral}. Unlike the normalized Laplacian considered here, their diffusion operator is nonlinear and depends on the state of the diffusion process, rather than being represented by a fixed matrix. In particular, the operator changes with the evolving distribution of mass across the vertices, and its eigenvalue is defined through this nonlinear diffusion framework. Consequently, $\gamma_2$ is not the second smallest eigenvalue of a fixed normalized Laplacian matrix, and the resulting bound is not directly comparable to the spectral bounds established here.  

{\thesisrev{Next, we improve Cheeger's inequality for non-covering hypergraphs. For any graph G that is not complete, we have $\evalNLH_2(G)\leq 1$~\cite{chung1996laplacians}. Xu and Zhou established the following analogous result for hypergraphs~\cite{xu2024normalized}. \ignore{For completeness, we reproduce the proof here for the convenience of the reader.}

\begin{theorem}
    Let $H$ be a non-covering hypergraph, then we have $\evalNLH_2(H)\leq 1$.
\end{theorem}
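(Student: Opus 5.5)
Since $H$ is non-covering, there are distinct vertices $u,v$ that lie in no common hyperedge, so $A_H[u,v]=0$. By Lemma~\ref{lemma:graph-equivalence}, the AE graph $G_H$ has the same normalized Laplacian as $H$. In $G_H$ the vertices $u$ and $v$ are non-adjacent, so $G_H$ is not complete. The quickest route is then to invoke Chung's result that $\evalNLH_2(G)\leq 1$ for every non-complete graph $G$, applied to $G_H$. This gives $\evalNLH_2(H)=\evalNLH_2(G_H)\leq 1$.

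For a self-contained argument, I would use the variational characterization~\eqref{eq:nu2} with an explicit test vector supported on $\{u,v\}$. Set $\bfy(u)=d_H(v)$, $\bfy(v)=-d_H(u)$, and $\bfy(w)=0$ for every other $w$. Then $\bfy^T\hat{\bfpsi}_1=d_H(u)d_H(v)-d_H(v)d_H(u)=0$, so $\bfy$ is admissible.

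The denominator is
\[
\bfy^TD_H\bfy=d_H(u)d_H(v)^2+d_H(v)d_H(u)^2 .
\]
For the numerator I would expand $\bfy^TL_H\bfy=\bfy^TD_H\bfy-\bfy^TA_H\bfy$. Because $\bfy$ is supported on $\{u,v\}$ and the diagonal of $A_H$ vanishes, we get $\bfy^TA_H\bfy=2A_H[u,v]\bfy(u)\bfy(v)$. This is $0$ because $A_H[u,v]=0$. Hence the Rayleigh quotient equals exactly $1$, and~\eqref{eq:nu2} yields $\evalNLH_2(H)\leq 1$.

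No step is a real obstacle. The only points to check are that $\bfy\neq 0$ and that the degrees are positive, so that $D_H^{-1/2}$ and $\calL_H$ are defined. This is implicit in the paper's definition of $\calL_H$, and it holds for instance when $H$ is connected with positive weights. If some degree were zero, the test vector would instead be taken supported on an isolated vertex.
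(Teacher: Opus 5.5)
Your argument is correct and is essentially the paper's own argument (credited there to Xu and Zhou): the same test vector $\bfy(u)=d_H(v)$, $\bfy(v)=-d_H(u)$, $\bfy(w)=0$ otherwise, is orthogonal to $\hat{\bfpsi}_1$ and has Rayleigh quotient exactly $1$ in~\eqref{eq:nu2}. The differences are cosmetic. You obtain the numerator as $\bfy^TD_H\bfy-\bfy^TA_H\bfy$ using $A_H[u,v]=0$, while the paper sums~\eqref{eq:lap-quad-form} edge by edge; your alternative reduction through the AE graph $G_H$ to Chung's bound for non-complete weighted graphs is also valid.
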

\ignore{\begin{proof}
    Since $H$ is non-covering, there exist two vertices $u$ and $v$ in $H$ for which no hyperedge contains both $u$ and $v$. Define $\bfr\in \mathbb{R}^{V(H)}$ by setting $\bfr(u)=d_H(v)$, $\bfr(v)=-d_H(u)$, and $\bfr(w)=0$ for every $w\notin\{u,v\}$. Together with Eq.~\ref{eq:lap-quad-form}, this implies that $\bfy^TL_H\bfy=d_H(u)d_H^2(v)+d_H^2(u)d_H(v)$. Moreover, we have $\bfy^TD_H\bfy=d_H(u)d_H^2(v)+d_H^2(u)d_H(v)$. Because the vector $\bfr$ is orthogonal to $\hat{\bfpsi}_1$, Eq.~\ref{eq:nu2} gives
    \[\evalNLH_2(H)\leq\frac{\bfr^TL_H\bfr}{\bfr^TD_H\bfr}=1\]
    which completes the proof.
\end{proof}}
}}
Using the spectral characterization established above, we derive a further improvement of the Cheeger's inequality that relates the second-smallest eigenvalue of the normalized Laplacian to the conductance of non-covering hypergraphs. Banerjee's bound specializes to $\phih(H) \leq (\kappa-1)\sqrt{(2-\evalNLH_2(H))\evalNLH_2(H)}$ as also established by Xu and Zhou for a $\kappa$-uniform hypergraph $H$~\cite{xu2024normalized}. We extend this result to non-covering hypergraphs and obtain a sharper bound.
\begin{theorem}[Improved Cheeger's inequality for non-covering hypergraphs]
\label{thm:cheeger-ineq-hypergraph}
    Let $H$ be a non-covering hypergraph. Then, we have
\begin{equation}
  \label{eq:cheeger-ineq-hypergraph}
  \frac{\evalNLH_2(H)}{2} \leq \newphi(H) \leq \sqrt{(2-\evalNLH_2(H))\evalNLH_2(H)}
\end{equation}
\end{theorem}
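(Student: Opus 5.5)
The plan is to transfer the statement to the AE graph $G_H$ of Lemma~\ref{lemma:graph-equivalence} and then apply Chung's refinement of the upper Cheeger bound for 2-graphs, $\phig(G) \leq \sqrt{(2-\evalNLG_2(G))\evalNLG_2(G)}$. The refinement should hold for any connected weighted graph whose second eigenvalue satisfies $\evalNLG_2(G)\leq 1$, and the non-covering assumption is exactly what supplies that condition.

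The lower bound needs nothing new. By Lemma~\ref{lemma:graph-equivalence}, $\calL_{G_H}=\calL_H$, so $\evalNLG_2(G_H)=\evalNLH_2(H)$. By Lemma~\ref{lemma:conductance-equivalence}, $\phig(G_H)=\newphi(H)$. The left inequality of~\eqref{eq:cheeger-graph} then gives $\evalNLH_2(H)/2\leq\newphi(H)$; this is the first half of~\eqref{eq:cheeger-ineq-2-graph-diffusion}. We may take $H$ connected, since otherwise $\evalNLH_2(H)=0$ and $\newphi(H)=0$, and both sides hold trivially.

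For the upper bound, I will use the sharper analysis of the sweep cut. Let $\bfy\perp\hat{\bfpsi}_1$ attain $\evalNLH_2$, and take a suitably shifted positive part $\bfy^+$ supported on at most half the volume, with $R(\bfy^+)\leq \evalNLH_2$. The standard co-area computation gives, for the best sweep set $S$,
\[
\phig(S)\leq \frac{\sum_{\{u,v\}} w(u,v)\,|\bfy^+(u)^2-\bfy^+(v)^2|}{\sum_u d(u)\bfy^+(u)^2}.
\]
Cauchy--Schwarz bounds the numerator by $\sqrt{\sum w(\bfy^+(u)-\bfy^+(v))^2}\cdot\sqrt{\sum w(\bfy^+(u)+\bfy^+(v))^2}$. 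The second factor equals $2\sum d(u)\bfy^+(u)^2-\sum w(\bfy^+(u)-\bfy^+(v))^2$. Writing $R=R(\bfy^+)$, this yields $\phig(S)\leq\sqrt{R(2-R)}$. The function $t\mapsto t(2-t)$ is increasing on $[0,1]$. Therefore, if $R\leq \evalNLH_2(H)\leq 1$, we get $\phig(G_H)\leq\sqrt{(2-\evalNLH_2)\evalNLH_2}$. The hypothesis $\evalNLH_2(H)\leq 1$ comes from the Xu--Zhou theorem, since $H$ is non-covering. (Equivalently, $G_H$ is not complete: a non-adjacent pair in $G_H$ is exactly a pair covered by no hyperedge.) Transferring back via Lemma~\ref{lemma:conductance-equivalence} then gives $\newphi(H)\leq\sqrt{(2-\evalNLH_2(H))\evalNLH_2(H)}$.

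The main obstacle is the monotonicity step: $R(\bfy^+)$ may be strictly smaller than $\evalNLH_2$, and we need it to stay in the region where $t(2-t)$ is increasing. This holds because $R(\bfy^+)\leq\evalNLH_2\leq 1$, which is exactly where non-coveringness enters. Without it, $\evalNLH_2$ could exceed $1$, and the bound in terms of $\evalNLH_2$ would fail to dominate. I would simply cite Chung's theorem for non-complete graphs~\cite{chung1996laplacians} applied to $G_H$, and include the sketch above only as justification.
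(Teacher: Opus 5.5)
Your proposal is correct and is essentially the paper's argument. The lower bound comes from the AE graph via Lemmas~\ref{lemma:graph-equivalence} and~\ref{lemma:conductance-equivalence}. The upper bound is the same positive-part, co-area and Cauchy--Schwarz computation, giving $\newphi(H)^2\le Q(2-Q)$ with $Q\le\evalNLH_2(H)\le 1$, and it closes with monotonicity of $t(2-t)$ on $[0,1]$ and the non-covering hypothesis. The differences are cosmetic: the paper works with $A_H$ directly and gets $Q\le\evalNLH_2(H)$ from the eigenvalue equation for the unshifted positive part $\bfx^+$ of the eigenvector, handling the volume condition by a sign case split, whereas you phrase it on $G_H$ with a median shift.
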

The proof of the lower bound follows from Eq.~(\ref{eq:cheeger-ineq-2-graph-diffusion}). Improving the upper bound requires more work, so we have moved it to Section~\ref{thm:cheeger-ineq-upper-bound-proof}.

\begin{corollary}
    \label{thm:cheeger-ineq-hypergraph-cor}
    {\thesisrev{Let $H$ be a non-covering hypergraph.}} Then, we have
\begin{equation}
  \label{eq:cheeger-ineq-hypergraph-cor}
  \frac{\evalNLH_2(H)}{\Upsilon_H} \leq \phih(H) \leq \sqrt{(2-\evalNLH_2(H))\evalNLH_2(H)}
\end{equation}
where $\Upsilon_H$ is rank of $H$.
\end{corollary}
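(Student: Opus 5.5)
The corollary follows by combining Theorem~\ref{thm:cheeger-ineq-hypergraph} with the comparison \eqref{eq:banerjee} between the two conductances. We take Theorem~\ref{thm:cheeger-ineq-hypergraph} as given, including its upper bound, whose proof is deferred to a later section. For a non-covering hypergraph $H$ it gives
\[
\frac{\evalNLH_2(H)}{2} \leq \newphi(H) \leq \sqrt{(2-\evalNLH_2(H))\evalNLH_2(H)}.
\]

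\emph{Lower bound.} We use the right half of \eqref{eq:banerjee}, namely $\newphi(H)\leq \frac{\Upsilon_H}{2}\phih(H)$. Together with $\evalNLH_2(H)/2\leq \newphi(H)$ this gives $\evalNLH_2(H)/2 \leq \frac{\Upsilon_H}{2}\phih(H)$. Dividing by $\Upsilon_H/2>0$ yields $\evalNLH_2(H)/\Upsilon_H\leq \phih(H)$. This is the same step as in the proof of Theorem~\ref{thm:cheeger-ineq-general}.

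\emph{Upper bound.} We use the left half of \eqref{eq:banerjee}, namely $\phih(H)\leq\newphi(H)$. This holds edge by edge: for $e\in\partialh(S)$, with $a=|e\cap S|\geq 1$, $b=|e\setminus S|\geq 1$ and $a+b=|e|$, we have $ab\geq a+b-1$, i.e.\ $ab/(|e|-1)\geq 1$. Combining with the upper bound of Theorem~\ref{thm:cheeger-ineq-hypergraph} gives $\phih(H)\leq\sqrt{(2-\evalNLH_2(H))\evalNLH_2(H)}$.

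\emph{Main obstacle.} The only substantive ingredient is the deferred upper bound of Theorem~\ref{thm:cheeger-ineq-hypergraph}, which we treat as given. The only point to check is that the square root is real. Since $H$ is non-covering, the theorem of Xu and Zhou gives $\evalNLH_2(H)\leq 1<2$, so $2-\evalNLH_2(H)>0$.
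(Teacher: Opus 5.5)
Your proof is correct and is the paper's own argument: the paper proves the corollary in one line by combining Theorem~\ref{thm:cheeger-ineq-hypergraph} with \eqref{eq:banerjee}, using $\newphi(H)\leq\frac{\Upsilon_H}{2}\phih(H)$ for the lower bound and $\phih(H)\leq\newphi(H)$ for the upper bound, exactly as you do. Your edge-by-edge check $ab\geq a+b-1$, i.e.\ $(a-1)(b-1)\geq 0$, correctly supplies the left half of \eqref{eq:banerjee}, which the paper only asserts as easy to see.
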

\begin{proof}
    The proof follows from Theorem~\ref{thm:cheeger-ineq-hypergraph} and Eq.~\eqref{eq:banerjee}.
\end{proof}
{\thesisrev{ Note that \eqref{eq:cheeger-ineq-hypergraph} also holds for $2$-graphs which are not complete because only complete graphs have the second smallest eigenvalue of their normalized Laplacian greater than $1$~\cite{chung1997spectral}. In Section~\ref{section:cheeger-inequality:tightness:upper}, we further show that the cycle $2$-graph is tight for the upper bounds in both \eqref{eq:cheeger-graph} and \eqref{eq:cheeger-ineq-hypergraph}. This implies that \eqref{eq:cheeger-ineq-hypergraph} provides a strictly better upper bound on the conductance for non-complete $2$-graphs compared to \eqref{eq:cheeger-graph}.}}

In Section~\ref{sec:algorithmic}, we analyze Fiedler’s two-way global partitioning algorithm using our Cheeger’s inequality, which leverages the eigenvector associated with the second smallest eigenvalue of the normalized Laplacian.

\ignore{
The upper bound in~\eq
\begin{lemma}
\label{thm:cheeger-ineq-upper-bound}
Let $H$ be a connected hypergraph and let $\evalNLH_2(H)$ be the second smallest eigenvalue of its normalized Laplacian $\calL_H$. Then, we have
\begin{equation}
  \label{eq:cheeger-upper-bound}
  \newphi(H) \leq \sqrt{(2-\evalNLH_2(H))\evalNLH_2(H)}.
\end{equation}
\end{lemma}
We give proof of Lemma~\ref{thm:cheeger-ineq-upper-bound} in Section~\ref{thm:cheeger-ineq-upper-bound-proof}.
\begin{corollary}
\label{thm:cheeger-ineq-complete}
Let $H$ be a connected hypergraph and let $\evalNLH_2(H)$ be the second smallest eigenvalue of its normalized Laplacian $\calL_H$. Then, we have
\begin{equation}
  \label{eq:cheeger-complete}
  \frac{\evalNLH_2(H)}{2} \leq \newphi(H) \leq \sqrt{(2-\evalNLH_2(H))\evalNLH_2(H)}
\end{equation}
\end{corollary}
The proof of this corollary directly follows from Lemma~\ref{thm:cheeger-ineq} and Lemma~\ref{thm:cheeger-ineq-upper-bound} because $\sqrt{2\evalNLH_2(H)} \geq \sqrt{(2-\evalNLH_2(H))\evalNLH_2(H)}$.
}
\ignore{The lower bound is easy to check. For any $S \subseteq V$ if we define $\bm{y}=\bm{\chi}_S-\sigma\bm{1}$ where  $\bm{\chi}_S$ is the indicator vector of the set $S$ and $\sigma=\frac{\vol(S)}{\vol(V)}$, we can show that $\bm{y} \perp \bfpsi_1$ and so we can use $\bm{y}$ to deduce that $\hat{\phi}(S)$ is lower bounded by $\nu_2/2$. The detailed calculation can be found in Section~\ref{}. The proof of the upper bound is more involved and can be found in Section~\ref{}.}
\section{Construction of Optimal Hypergraph Expanders}
In this section, we present a construction of hypergraph expanders derived from $2$-graph expanders. Specifically, we show that any family of $2$-graph expanders can be transformed into a corresponding family of hypergraph expanders. Among graph expanders, Ramanujan graphs are known to be optimal due to their tightness with respect to the Alon--Boppana bound. While our construction applies to general $2$-graph expanders, applying it to Ramanujan graphs yields a family of hypergraph expanders that is also tight for the Alon--Boppana bound. We first state the definition of hypergraph expanders.

\label{section:expander-hypergraphs}
\begin{definition}[Expander hypergraph]
\label{def:expander-hypergraphs}
    Let $\scrH = \{H_i : i \in \scrI\}$ be a class of hypergraphs where $\scrI$ is an index set. The class $\scrH$ is called an {\em expander family} or a {\em family of expanders} if there exist constants $d>0$ and $\phi>0$ such that
    \begin{enumerate}
        \item Given a positive integer $\ell$, there exist at most finitely many hypergraphs in $\scrH$ having at most $\ell$ vertices.
        \item $\underset{v \in V_H}{\max}\, d_H(v) \leq d \text{ for all } H \in \scrH$.
        \item\label{item:expander} $\phih(H)\geq \phi \text{ for all } H \in \scrH$.
    \end{enumerate}
\end{definition}
The following Alon-Boppana-like bound is known for hypergraphs.
\begin{theorem}[Feng and Li~\cite{feng1996spectra}]\label{alon-boppanna-bound}
    Let $H$ be a $\kappa$-uniform and $d$-regular hypergraph. If there exist two hyperedges with at least $2t+2$ distance between them, then the second smallest eigenvalue of the Laplacian of $H$ satisfies
    \[\mu_2(H)\leq d-\frac{\kappa-2}{\kappa-1}-2\sqrt{\frac{d-1}{\kappa-1}}+\frac{1}{t+1}\left(2\sqrt{\frac{d-1}{\kappa-1}}-\frac{1}{\kappa-1}\right)\]
    where $\frac{1}{t+1}\left(2\sqrt{\frac{d-1}{\kappa-1}}-\frac{1}{\kappa-1}\right)$ tends to $0$ for all fixed values of $d$ and $\kappa$ as $n\longrightarrow\infty$.
\end{theorem}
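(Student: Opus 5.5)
The plan is to follow the classical test-vector proof of the Alon--Boppana bound, carried out directly on the quadratic form \eqref{eq:lap-quad-form}. Equivalently, we can work in the AE graph $G_H$ of Lemma~\ref{lemma:graph-equivalence}, where $L_H = dI - A_H$ and $A_H[u,v]$ equals $\frac{1}{\kappa-1}$ times the number of hyperedges containing both $u$ and $v$. By \eqref{eq:nu2} (unnormalized version), it suffices to find a nonzero $\bfx \perp \bm{1}$ with
\[
\frac{\bfx^TA_H\bfx}{\bfx^T\bfx}\ \geq\ \frac{\kappa-2}{\kappa-1}+2\sqrt{\frac{d-1}{\kappa-1}}-\frac{1}{t+1}\left(2\sqrt{\frac{d-1}{\kappa-1}}-\frac{1}{\kappa-1}\right),
\]
since then $\mu_2(H)\le d-\bfx^TA_H\bfx/\bfx^T\bfx$.

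\textbf{Step 1 (one-sided test vector).} Fix a hyperedge $e_0$. Let $V_0=e_0$, and for $1\le i\le t$ let $V_i$ be the set of vertices first reached by hyperedges at distance $i-1$ from $e_0$; these are the layers of a ``hyper-ball'' of radius $t$. Define $\bfx(v)=q^{-i}$ for $v\in V_i$ and $\bfx(v)=0$ outside the ball, where $q=\sqrt{(d-1)(\kappa-1)}$.

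Every vertex $v\in V_i$ with $i\ge 1$ lies in at least one ``backward'' hyperedge. That hyperedge contains $\kappa-2$ further vertices of $V_i$ or of earlier layers, which gives $v$ a same-or-earlier-layer weight of at least $\frac{\kappa-2}{\kappa-1}$. The remaining at most $d-1$ ``forward'' hyperedges of $v$ each contain at most $\kappa-1$ vertices of $V_{i+1}$. Hence $|V_{i+1}|\le (d-1)(\kappa-1)|V_i|$, so that $s_i:=\sum_{v\in V_i}\bfx(v)^2$ is nonincreasing in $i$.

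Expanding $\bfx^TA_H\bfx=\sum_u \bfx(u)\sum_v A_H[u,v]\bfx(v)$ and using that $\bfx$ is nonnegative and decreasing in the layer index, I will bound it below layer by layer. The within-layer contribution is at least $\frac{\kappa-2}{\kappa-1}s_i$. Each backward hyperedge contributes a cross-layer term between $V_{i}$ and $V_{i-1}$ of at least $\frac{1}{\kappa-1}\bfx(v)\bfx(\text{parent})$, counted twice by symmetry. With the choice of $q$, summing the cross-layer terms gives $2\sqrt{\frac{d-1}{\kappa-1}}\sum_{i<t}s_i$, up to boundary losses. The boundary losses come from the last layer $V_t$, which has no forward partners inside the support, and from $V_0$, which has no backward edge but has the full $\frac{\kappa-1}{\kappa-1}$ same-edge weight, hence the $-\frac{1}{\kappa-1}$ correction. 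Dividing by $\sum_{i=0}^t s_i$ and using monotonicity of $s_i$, the boundary deficit is at most a $\frac{1}{t+1}$ fraction, which gives exactly the claimed expression.

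\textbf{Step 2 (orthogonalize).} Take the two hyperedges $e_1,e_2$ at distance $\ge 2t+2$ and build vectors $\bfx_1,\bfx_2$ around them as in Step 1. Their radius-$t$ balls are disjoint, and no hyperedge meets both supports, so $A_H[u,v]=0$ whenever $u\in\operatorname{supp}\bfx_1$ and $v\in\operatorname{supp}\bfx_2$. Choose $c$ with $\bfx=\bfx_1-c\bfx_2\perp\bm 1$. All cross terms in the quotient vanish, so the Rayleigh quotient of $\bfx$ is a weighted average of those of $\bfx_1$ and $\bfx_2$. Each of these is at least the bound from Step 1, which finishes the argument. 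The final claim, that the $\frac{1}{t+1}$ term tends to $0$, follows because with bounded $d,\kappa$ the number of vertices within distance $t$ of a hyperedge is at most $\kappa((d-1)(\kappa-1))^{t}$. Therefore $t$ can be taken of order $\log n$ and tends to infinity with $n$.

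\textbf{Main obstacle.} The delicate part is Step 1 when $H$ is not locally a hypertree. Vertices may have several backward hyperedges, and a forward hyperedge may also contain vertices of the same layer, so the exact layer counts fail. I will handle this as in Nilli's proof of the graph case. Because $\bfx$ is nonnegative, any extra adjacency only increases $\bfx^TA_H\bfx$. I will therefore lower-bound using a single designated backward hyperedge per vertex, and charge all other incidences nonnegatively. The only genuine inequality needed is then the monotonicity of $s_i$, which follows from the counting bound $|V_{i+1}|\le (d-1)(\kappa-1)|V_i|$. I expect verifying the exact boundary constant $\frac{1}{t+1}\bigl(2\sqrt{\frac{d-1}{\kappa-1}}-\frac{1}{\kappa-1}\bigr)$ to be the fiddliest bookkeeping.
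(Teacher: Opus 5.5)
\paragraph{Review.} The paper does not prove this statement; it is quoted from Feng and Li, whose argument is the Nilli-type test-vector proof you are imitating. Your layering, the count $|V_{i+1}|\le (d-1)(\kappa-1)|V_i|$, and Step 2 are all fine. The accounting in Step 1, however, fails.

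\paragraph{Where Step 1 breaks.} Write $x_i=((d-1)(\kappa-1))^{-i/2}$ for the value of $\bfx$ on $V_i$, so that $x_ix_{i-1}=\sqrt{(d-1)(\kappa-1)}\,x_i^2$. If you keep one designated backward hyperedge per vertex and discard everything else, the terms you can actually certify give
\[
\bfx^TA_H\bfx\ \ge\ |V_0|x_0^2+\sum_{i=1}^{t}|V_i|\Bigl(\tfrac{\kappa-2}{\kappa-1}x_i^2+\tfrac{2}{\kappa-1}x_ix_{i-1}\Bigr)
=\Bigl(\tfrac{\kappa-2}{\kappa-1}+2\sqrt{\tfrac{d-1}{\kappa-1}}\Bigr)\sum_{i=0}^{t}s_i-\Bigl(2\sqrt{\tfrac{d-1}{\kappa-1}}-\tfrac{1}{\kappa-1}\Bigr)s_0 .
\]
The cross terms are indexed by the child's layer. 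So they sum to $2\sqrt{\frac{d-1}{\kappa-1}}\sum_{i=1}^{t}s_i$, not $\sum_{i<t}s_i$. Re-indexing them by the parent's layer would need $|V_{i+1}|\ge (d-1)(\kappa-1)|V_i|$, which is the reverse of your counting bound.

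Consequently the whole deficit sits at $s_0$, and nothing is lost at $V_t$. Since $s_i$ is nonincreasing, $s_0\ge\frac{1}{t+1}\sum_i s_i$, so monotonicity points the wrong way.

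This is a real loss, not loose bookkeeping. Take the $3$-regular circular ladder ($\kappa=2$) with $e_0$ a rung, so $|V_0|=2$ and $|V_i|=4$ for $i\ge 1$. The certified quotient never exceeds $(8\sqrt2+2)/6\approx 2.22$. The bound you need exceeds this for every $t\ge 3$ and tends to $2\sqrt2$. The incidences you discard (here, the rungs inside each layer) are exactly the mass that regularity guarantees. It is true that ``extra adjacency only helps'', but throwing that adjacency away is fatal to the argument.

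\paragraph{The fix.} The missing idea is Nilli's: use the degree constraint instead of discarding. Since $\sum_v A_H[u,v]=d$ for every $u$, you have $\bfx^TA_H\bfx=d\,\bfx^T\bfx-\bfx^TL_H\bfx$. Now bound $\bfx^TL_H\bfx$ from above via \eqref{eq:lap-quad-form}, using three facts:
\begin{enumerate}
\item A hyperedge meets at most two consecutive layers.
\item Pairs inside one layer contribute $0$.
\item If $e$ has $a_e$ vertices in $V_i$ and $b_e$ in $V_{i+1}$, then $\sum_e a_eb_e\le (d-1)(\kappa-1)|V_i|$.
\end{enumerate}
The third fact is an upper bound on forward weight, which is exactly what your counting already gives (also for $i=0$, since $e_0\subseteq V_0$). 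Hence
\[
\bfx^TL_H\bfx\le (d-1)\sum_{i=0}^{t-1}|V_i|(x_i-x_{i+1})^2+(d-1)|V_t|x_t^2
=\Bigl(d-\tfrac{\kappa-2}{\kappa-1}-2\sqrt{\tfrac{d-1}{\kappa-1}}\Bigr)\sum_{i=0}^{t}s_i+\Bigl(2\sqrt{\tfrac{d-1}{\kappa-1}}-\tfrac{1}{\kappa-1}\Bigr)s_t .
\]
The deficit now sits at $s_t\le\frac{1}{t+1}\sum_i s_i$, and the stated bound follows. With Step 1 replaced by this argument, your Step 2 completes the proof.
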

Corollary~\ref{expander-corollary} presents hypergraph expander constructions that achieve this bound.

\begin{lemma}\label{thm:expander-1}
  Given an expander family $\scrG = \{G(k,i)\}_{i\in \calI}$ of 2-graphs where $G(k,i)$ is a $k$-regular $2$-graph and $\calI$ is an index set. For all $r:0 \leq r \leq k-2$, there exists an expander family $\scrK_{\scrG}^{(r)}=\{K_{G(k,i)}^{(r)}\}_{i\in \calI}$ of $(k-r)$-uniform and regular hypergraphs with degree $\frac{k!}{(k-1-r)!}$ such that $\evalNLH_2\left(K_{G(k,i)}^{(r)}\right)\geq 1-\frac{3k-4}{k(k-1)}$ for all $i\in\calI$.
\end{lemma}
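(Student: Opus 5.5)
The plan is to build the hypergraph explicitly from the $k$-regular graph $G=G(k,i)$ on the \emph{same} vertex set, compute Banerjee's adjacency matrix $A_H$ as a polynomial in $A_G$, and read off the spectrum of $\calL_H$ directly. Expansion will then follow from Theorem~\ref{thm:cheeger-ineq-general}.

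\textbf{Construction.} The degree $\frac{k!}{(k-1-r)!}=k\cdot\frac{(k-1)!}{(k-1-r)!}$ suggests the following definition of $K^{(r)}_{G}$. For every vertex $v$ of $G$ and every ordered $r$-tuple $(u_1,\dots,u_r)$ of distinct neighbours of $v$, put in a hyperedge $N_G(v)\setminus\{u_1,\dots,u_r\}$ of weight $1$. Repeated hyperedges are allowed as a multiset; equivalently we add their weights. Each such hyperedge has size $k-r\ge 2$, so the hypergraph is $(k-r)$-uniform. Fix a vertex $u$. It lies in the hyperedges coming from each of its $k$ neighbours $v$, together with the ordered $r$-tuples drawn from the other $k-1$ neighbours of $v$. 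Hence $d_H(u)=k\frac{(k-1)!}{(k-1-r)!}$, and the hypergraph is regular with the stated degree. Since $V_H=V_G$, condition (1) of Definition~\ref{def:expander-hypergraphs} is inherited from $\scrG$, and condition (2) holds with $d=\frac{k!}{(k-1-r)!}$.

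\textbf{Spectrum.} Take distinct $u,w$. A hyperedge contains both of them exactly when it comes from a common neighbour $v$ and the tuple avoids $u,w$; for each such $v$ there are $\frac{(k-2)!}{(k-2-r)!}$ tuples. By \eqref{adjacency:hypergraph},
\[A_H[u,w]=\frac{(k-2)!}{(k-2-r)!\,(k-r-1)}\,(A_G^2)[u,w]=c\,(A_G^2-kI)[u,w],\qquad c=\frac{(k-2)!}{(k-1-r)!}.\]
The diagonal also matches, since $(A_G^2)[u,u]=k$. Note that $d_H=c\,k(k-1)$, so
\[\calL_H=I-\frac{A_H}{d_H}=I-\frac{A_G^2-kI}{k(k-1)}.\]
Thus the eigenvalues of $\calL_H$ are $1-\frac{\lambda^2-k}{k(k-1)}$, where $\lambda$ ranges over the eigenvalues of $A_G$. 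The eigenvalue $\lambda=k$ gives $0$, as it must. Every other $\lambda$ gives at least $1-\frac{3k-4}{k(k-1)}$ provided $\lambda^2\le 4(k-1)$, that is, $|\lambda|\le 2\sqrt{k-1}$.

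\textbf{Main obstacle.} The key step is justifying $|\lambda|\le 2\sqrt{k-1}$ for all nontrivial $\lambda$, including the negative end of the spectrum; in particular $G$ must not be bipartite, since $\lambda=-k$ would give $\nu_2=0$. This is exactly the Ramanujan condition, so I expect this step to force the hypothesis that $\scrG$ consists of non-bipartite Ramanujan graphs. The weakest reading of ``expander family'' that still works is a uniform two-sided bound $\max_{\lambda\ne k}|\lambda|\le 2\sqrt{k-1}$. For a general two-sided expander with $\max_{\lambda\neq k}|\lambda|\le (1-\epsilon)k$, the same formula still yields $\nu_2\ge \frac{k(2\epsilon-\epsilon^2)}{k-1}>0$ uniformly, which suffices for the expander property but not for the explicit constant.

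\textbf{Conclusion.} Once $\nu_2$ is bounded below, Theorem~\ref{thm:cheeger-ineq-general} gives $\phih(K^{(r)}_{G(k,i)})\ge \nu_2/(k-r)$, a positive constant independent of $i$. This gives condition (3) of Definition~\ref{def:expander-hypergraphs} and finishes the proof.
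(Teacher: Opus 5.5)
Your proof is essentially the paper's. Your ordered-$r$-tuple hyperedges $N_G(v)\setminus\{u_1,\dots,u_r\}$ form exactly the multiset the paper gets by deleting one vertex at a time from the neighbourhood hyperedges $e_v=N(v)$, and both arguments reduce to $A_H=\frac{(k-2)!}{(k-1-r)!}(A_G^2-kI)$, hence $\calL_H=I-\frac{1}{k(k-1)}(A_G^2-kI)$, before invoking the Ramanujan bound.

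Your ``main obstacle'' remark is correct and repairs the paper's argument, which only cites $\lambda_2(G)\le 2\sqrt{k-1}$. Because $\calL_H$ depends on $A_G^2$, one needs $|\lambda|\le 2\sqrt{k-1}$ for every nontrivial eigenvalue. For bipartite $G$ (including the bipartite Lubotzky--Phillips--Sarnak graphs), every hyperedge lies inside one side of the bipartition, so the hypergraph is disconnected and $\nu_2=0$. The lemma therefore genuinely requires the non-bipartite two-sided Ramanujan hypothesis, not an arbitrary expander family.
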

\begin{proof}
        Construct hypergraph $K_{G(k,i)}^{(0)}$ from $G(k,i)$ as follows. The vertex set of $K_{G(k,i)}^{(0)}$ is same as that of $G(k,i)$ i.e. $V_{K_{G(k,i)}^{(0)}}=V_{G(k,i)}$. The hyperedge set $E_{K_{G(k,i)}^{(0)}}$ of $K_{G(k,i)}^{(0)}$ is defined by $E_{K_{G(k,i)}^{(0)}}=\{e_v:v\in V_{G(k,i)}\}$ where hyperedge $e_v$ consists of all one-hop neighbors of $v$ in $G(k,i)$ ($e_v$ does not contain $v$). So, there are total $n$ hyperedges in $K_{G(k,i)}^{(0)}$. The hypergraph $K_{G(k,i)}^{(0)}$ is $k$-regular and $k$-uniform. If $A_{G(k,i)}$ is adjacency matrix of $G(k,i)$, then the adjacency matrix of $K_{G(k,i)}^{(0)}$ is $\frac{1}{k-1}B$ where $B=A_{G(k,i)}^2-kI$. It implies that normalized Laplacian of $K_{G(k,i)}^{(0)}$ is $I-\frac{1}{k(k-1)}B$.

    We construct hypergraph $K_{G(k,i)}^{(1)}$ from $K_{G(k,i)}^{(0)}$ as follows. The vertex set of $K_{G(k,i)}^{(1)}$ is same as that of $K_{G(k,i)}^{(0)}$. The hyperedge set $E_{K_{G(k,i)}^{(1)}}$ of $K_{G(k,i)}^{(1)}$ is defined as the multiset $\{e\setminus\{v\}:v\in e\text{ and }e\in E_{K_{G(k,i)}^{(0)}}\}$ with duplicate hyperedges retained when they arise from distinct hyperedges in $E_{K_{G(k,i)}^{(0)}}$. Any hyperedge of $K_{G(k,i)}^{(0)}$ is a set of $k$ vertices. We replace each hyperedge of $K_{G(k,i)}^{(0)}$ by all possible $k$ subsets (new hyperedges) of size $k-1$. The hypergraph $K_{G(k,i)}^{(1)}$ is $k(k-1)$-regular and $(k-1)$-uniform. Then, adjacency matrix of $K_{G(k,i)}^{(1)}$ is $B$. It implies that normalized Laplacian of $K_{G(k,i)}^{(1)}$ is $I-\frac{1}{k(k-1)}B$. 

    In general, we construct hypergraph $K_{G(k,i)}^{(r)}$ from $K_{G(k,i)}^{(r-1)}$  for $r:1\leq r\leq k-2$ as follows. The vertex set of $K_{G(k,i)}^{(r)}$ is same as that of $K_{G(k,i)}^{(r-1)}$. The hyperedge set $E_{K_{G(k,i)}^{(r)}}$ of $K_{G(k,i)}^{(r)}$ is defined as the multiset $\{e\setminus\{v\}:v\in e\text{ and }e\in E_{K_{G(k,i)}^{(r-1)}}\}$ with duplicate hyperedges retained when they arise from distinct hyperedges in $E_{K_{G(k,i)}^{(r-1)}}$. We replace each hyperedge of $K_{G(k,i)}^{(r-1)}$ by all possible $\binom{k+1-r}{k-r}$ subsets (new hyperedges) of size $k-r$. The hypergraph $K_{G(k,i)}^{(r)}$ is regular with degree $\frac{k!}{(k-1-r)!}$ and $(k-r)$-uniform. Then, adjacency matrix of $K_{G(k,i)}^{(r)}$ is $\frac{(k-2)!}{(k-1-r)!}B$. This implies that normalized Laplacian of $K_{G(k,i)}^{(r)}$ is $I-\frac{1}{k(k-1)}B$.

    If we use the $k$-regular Ramanujan graphs explicitly constructed by Lubotzky, Phillips and Sarnak~\cite{lubotsky1988ramanujan}, then $\evalAG_2(G(k,i)))\leq 2\sqrt{k-1}$ implies that $\evalNLH_2\left(K_{G(k,i)}^{(r)}\right)\geq 1-\frac{3k-4}{k(k-1)}$ because the normalized Laplacian matrix of $K_{G(k,i)}^{(r)}$ is $I-\frac{1}{k(k-1)}(A_{G(k,i)}^2-kI)$.
\end{proof}

\begin{lemma}\label{thm:expander-2}
  Given an expander family $\scrG = \{G(k,i)\}_{i\in \calI}$ of 2-graphs where $G(k,i)$ is a $k$-regular $2$-graph and $\calI$ is an index set. For all $r:0 \leq r \leq k-2$, there exists an expander family $\scrH_{\scrG}^{(r)}=\{H_{G(k,i)}^{(r)}\}_{i\in \calI}$ of $(k-r)$-uniform and regular hypergraphs with degree $\frac{k!}{(k-1-r)!r!}$ such that $\evalNLH_2\left(H_{G(k,i)}^{(r)}\right)\geq 1-\frac{3k-4}{k(k-1)}$ for all $i\in\calI$.
\end{lemma}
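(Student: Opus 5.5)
The plan is to follow the construction in the proof of Lemma~\ref{thm:expander-1}, but to replace each neighbourhood hyperedge by its $(k-r)$-subsets \emph{once each} rather than with the multiplicities produced by iterated deletion. Concretely, for each $v\in V_{G(k,i)}$ let $e_v$ be the set of $k$ neighbours of $v$, exactly as in the construction of $K^{(0)}_{G(k,i)}$. Define $H^{(r)}_{G(k,i)}$ on the same vertex set, with hyperedge multiset
\[\{f : f\subseteq e_v,\ |f|=k-r,\ v\in V_{G(k,i)}\},\]
all hyperedges of unit weight. Duplicates are kept only when they arise from distinct $e_v$. Equivalently, $H^{(r)}_{G(k,i)}$ is $K^{(r)}_{G(k,i)}$ with every multiplicity divided by $r!$, since iterated deletion produces each $(k-r)$-subset of $e_v$ exactly $r!$ times. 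This equivalence already suggests the normalized Laplacian is unchanged, but I would verify it directly.

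\textbf{Degree and adjacency.} First I count degrees. A vertex $u$ lies in exactly $k$ sets $e_v$, namely those with $v$ a neighbour of $u$. Inside each such $e_v$, it lies in $\binom{k-1}{k-r-1}=\binom{k-1}{r}$ subsets of size $k-r$. So $d(u)=k\binom{k-1}{r}=\frac{k!}{(k-1-r)!\,r!}$, and the hypergraph is regular and $(k-r)$-uniform, as claimed.

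Next I compute the adjacency matrix via Eq.~\eqref{adjacency:hypergraph}. A pair $u\neq w$ lies in exactly $\binom{k-2}{r}$ of the $(k-r)$-subsets of each $e_v$ containing both of them. Each such subset contributes $\frac{1}{k-r-1}$. The number of $v$ with $\{u,w\}\subseteq e_v$ is the number of common neighbours, namely $B[u,w]$ with $B=A_{G(k,i)}^2-kI$ as before. Hence
\[A_H=\frac{(k-2)!}{r!\,(k-1-r)!}\,B.\]
Dividing by the degree gives
\[\calL_H=I-\frac{1}{k(k-1)}B,\]
identical to the normalized Laplacian of $K^{(r)}_{G(k,i)}$.

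\textbf{Spectral bound.} The eigenvalues of $B$ are $\lambda^2-k$ for the eigenvalues $\lambda$ of $A_{G(k,i)}$. The top eigenvalue $\lambda=k$ gives $\evalNLH_1=0$. For every other eigenvalue, the Ramanujan (or more generally expander) bound $|\lambda|\le 2\sqrt{k-1}$ gives $\lambda^2-k\le 3k-4$. Therefore
\[\evalNLH_2\ge 1-\frac{3k-4}{k(k-1)}.\]

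The expander properties in Definition~\ref{def:expander-hypergraphs} then follow. The degrees are bounded by the constant $\frac{k!}{(k-1-r)!\,r!}$. Finitely many members of the family have at most $\ell$ vertices, because this holds for $\scrG$. The lower bound on conductance comes from Theorem~\ref{thm:cheeger-ineq-general}:
\[\phih(H)\ge \frac{\evalNLH_2}{k-r}\ge \frac{1}{k}\left(1-\frac{3k-4}{k(k-1)}\right)>0\quad\text{for } k\ge 3.\]

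\textbf{Main obstacle.} The delicate step is the spectral bound, specifically the eigenvalue $\lambda=-k$. If $G(k,i)$ is bipartite, then $\lambda=-k$ occurs and gives $\lambda^2-k=k^2-k$, which would make $\evalNLH_2=0$; indeed $H$ is then disconnected, splitting along the bipartition. I would handle this as Lemma~\ref{thm:expander-1} implicitly does. I would use the non-bipartite Lubotzky--Phillips--Sarnak Ramanujan graphs, where every nontrivial eigenvalue satisfies $|\lambda|\le 2\sqrt{k-1}$. For a general expander family, I would add the hypothesis that all nontrivial eigenvalues satisfy $|\lambda|\le 2\sqrt{k-1}$ in absolute value, which is what makes $\lambda^2$ small and $H$ connected.
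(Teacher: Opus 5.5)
Your proof is correct, and its spectral step is more careful than the paper's. The construction is the same as the paper's; what differs is how the normalized Laplacian is verified.

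The paper does not compute the adjacency of $H^{(r)}_{G(k,i)}$ directly. It notes that $K^{(r)}_{G(k,i)}$ from Lemma~\ref{thm:expander-1} contains each hyperedge of $H^{(r)}_{G(k,i)}$ exactly $r!$ times. The two hypergraphs therefore differ by a global weight factor, which leaves the normalized Laplacian unchanged. The paper then inherits $\calL_H=I-\frac{1}{k(k-1)}(A_{G(k,i)}^2-kI)$ and the eigenvalue bound from Lemma~\ref{thm:expander-1}.

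You instead compute the degree $k\binom{k-1}{r}$ and the adjacency matrix $\binom{k-2}{r}\frac{1}{k-r-1}B$ directly. This is self-contained, and it actually supplies the count that Lemma~\ref{thm:expander-1} only asserts: its formula $\frac{(k-2)!}{(k-1-r)!}B$ is yours multiplied by $r!$. The paper's reduction is shorter once that lemma is granted.

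Your treatment of negative eigenvalues is a genuine correction. The paper uses only $\lambda_2(G(k,i))\le 2\sqrt{k-1}$. Because $\calL_H$ depends on $A_{G(k,i)}^2$, the bound actually requires $|\lambda|\le 2\sqrt{k-1}$ for every eigenvalue $\lambda\ne k$. For bipartite $G(k,i)$, including the bipartite members of the LPS family, $H$ is disconnected and $\nu_2=0$. So the hypothesis ``given an expander family'' must be strengthened as you propose. For the bare expander conclusion without the numeric bound, a uniform two-sided gap $\max_{\lambda\ne k}|\lambda|\le k-\epsilon$ already suffices, giving $\nu_2\ge \epsilon(2k-\epsilon)/(k(k-1))$.

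One wording fix: duplicates must be retained whenever they come from distinct vertices $v$, not distinct sets $e_v$. In general $N(v)=N(v')$ is possible for $v\neq v'$. Your identification of the number of $v$ with $\{u,w\}\subseteq e_v$ as the number of common neighbours needs the multiset to be indexed by $v$.
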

\begin{proof}
    For all $r:0 \leq r \leq k-2$, we construct the expander family $\scrH_{\scrG}^{(r)}=\{H_{G(k,i)}^{(r)}\}_{i\in \calI}$ of hypergraphs as follows. The vertex set of $H_{G(k,i)}^{(r)} \in \scrH_{\scrG}^{(r)}$ is the same as that of $G(k,i) \in \scrG$ i.e. $V_{H_{G(k,i)}^{(r)}}=V_{G(k,i)}$ for all $r:0 \leq r \leq k-2$ and for all $i\in\calI$. For $v\in V_{H_{G(k,i)}^{(r)}}$, let $N(v)$ be the one-hop neighborhood of $v$ in $V_{G(k,i)}$ ($N(v)$ does not contain $v$). Since $G(k,i)$ is $k$-regular, $|N(v)|=k$ for all $v\in G(k,i)$. The hyperedge set $E_{H_{G(k,i)}^{(r)}}$ of $H_{G(k,i)}^{(r)}$ is defined as the multiset $\{e:e\subseteq N(v),~|e|=k-r,\text{ and }  v\in V_{G(k,i)}\}$ with duplicate hyperedges retained when they arise from distinct neighborhoods. For every vertex $v\in V_{H_{G(k,i)}^{(r)}}$, the hyperedge set $E_{H_{G(k,i)}^{(r)}}$ consists of all possible $\comb[k]{k-r}$ subsets (each of size $k-r$) of $N(v)$ as hyperedges. Note that $H_{G(k,i)}^{(r)}$ is $(k-r)$-uniform and regular hypergraphs with degree $\frac{k!}{(k-1-r)!r!}$ for all $r:0 \leq r \leq k-2$.    

    Consider the expander family $\scrK_{\scrG}^{(r)}=\{K_{G(k,i)}^{(r)}\}_{i\in \calI}$ defined in Lemma \ref{thm:expander-1}. Note that $V_{H_{G(k,i)}^{(r)}}=V_{K_{G(k,i)}^{(r)}}=V_{G(k,i)}$ for all $r:0 \leq r \leq k-2$ and for all $i\in\calI$. The hyperedge set of $K_{G(k,i)}^{(r)}$ contains exactly $r!$ copies of each hyperedge of $H_{G(k,i)}^{(r)}$. In other words, the sets of hyperedges of $K_{G(k,i)}^{(r)}$ and $H_{G(k,i)}^{(r)}$ are identical except for the fact that each hyperedge $e\in E_{K_{G(k,i)}^{(r)}}$ has weight $r!w_{H_{G(k,i)}^{(r)}}(e)$ where $w_{H_{G(k,i)}^{(r)}}(e)$ is the weight of hyperedge $e\in H_{G(k,i)}^{(r)}$. Since multiplying all hyperedge weights by a constant does not change the normalized Laplacian matrix, so we have $\calL_{H_{G(k,i)}^{(r)}}=\calL_{K_{G(k,i)}^{(r)}}=I-\frac{1}{k(k-1)}(A_{G(k,i)}^2-kI)$ for all $r:0 \leq r \leq k-2$ and for all $i\in\calI$. Therefore, we have
    \[\evalNLH_2\left(H_{G(k,i)}^{(r)}\right)\geq 1-\frac{3k-4}{k(k-1)}\]
    for all $r:0 \leq r \leq k-2$ and for all $i\in\calI$.
\end{proof}

Cheeger's inequality~\eqref{eq:cheeger} allows us to express $\phih(H)\geq \phi$ equivalently in terms of the second-smallest eigenvalue of the normalized Laplacian $\text{ for all } H \in \scrH$. If $\scrG = \{G(k,i)\}_{i\in \calI}$ is a family of $2$-graphs where $G(k,i)$ is a $k$-regular $2$-graph and $\calI$ is an index set, the hypergraph families $\scrK_{\scrG}^{(r)}$ and $\scrH_{\scrG}^{(r)}$ have normalized Laplacian $I-\frac{1}{k(k-1)}B$ where $B=A_{G(k,i)}^2-kI$. Furthermore, if $\scrG = \{G(k,i)\}_{i\in \calI}$ is an expander family, not necessarily a Ramanujan family, the form of the normalized Laplacian $I-\frac{1}{k(k-1)}B$, together with Cheeger's inequality~\eqref{eq:cheeger}, implies that $\scrK_{\scrG}^{(r)}$ and $\scrH_{\scrG}^{(r)}$ are expander families of hypergraphs. Corrolary~\ref{expander-corollary} further establishes that $\scrH_{\scrG}^{(0)}$ and $\scrK_{\scrG}^{(0)}$ are identical and form optimal expander families of hypergraphs when $\scrG = \{G(k,i)\}_{i\in \calI}$ is an expander family of Ramanujan graphs.

\ignore{The proofs of Lemmas~\ref{thm:expander-1}~\&~\ref{thm:expander-2}\ are given in Section~\ref{sec:proof:expanders}}

\begin{corollary}\label{expander-corollary}
    The expander families $\scrH_{\scrG}^{(0)}$ and $\scrK_{\scrG}^{(0)}$ are identical and asymptotically tight for the Alon-Boppana bound given in Theorem \ref{alon-boppanna-bound}.
\end{corollary}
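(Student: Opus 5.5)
The first claim is immediate from the two constructions at $r=0$; the second is a direct eigenvalue computation for the Laplacian $L_H$ of $H^{(0)}_{G(k,i)}$, compared term by term with Theorem~\ref{alon-boppanna-bound} at $d=\kappa=k$.

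\textbf{Identity of the two families.} For $r=0$, the hyperedges of $H_{G(k,i)}^{(0)}$ in Lemma~\ref{thm:expander-2} are the subsets $e\subseteq N(v)$ with $|e|=k$. Since $|N(v)|=k$, the only such subset is $N(v)$ itself. This is exactly the hyperedge $e_v$ used to build $K_{G(k,i)}^{(0)}$ in Lemma~\ref{thm:expander-1}. The multiplicity factor $r!$ equals $1$, so the hyperedge multisets and weights agree and $\scrH_{\scrG}^{(0)}=\scrK_{\scrG}^{(0)}$.

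\textbf{Spectrum of the Laplacian.} Write $H=H^{(0)}_{G(k,i)}$ and $A=A_{G(k,i)}$. By the proof of Lemma~\ref{thm:expander-1}, $H$ is $k$-uniform and $k$-regular, so $D_H=kI$ and $A_H=\frac{1}{k-1}(A^2-kI)$. Hence
\[
L_H=kI-\frac{1}{k-1}(A^2-kI).
\]
Since $A$ is symmetric, its eigenvectors diagonalize $L_H$. An eigenvalue $\lambda$ of $A$ gives the eigenvalue
\[
\mu=k+\frac{k}{k-1}-\frac{\lambda^2}{k-1}
\]
of $L_H$. The eigenvalue $\lambda=k$ (eigenvector $\bm 1$) gives $\mu_1=0$. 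Every other eigenvalue of $L_H$ comes from a nontrivial eigenvalue of $A$.

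\textbf{Lower bound on $\mu_2(H)$.} Take $G(k,i)$ to be the non-bipartite Ramanujan graphs of Lubotzky, Phillips and Sarnak. Then every nontrivial eigenvalue satisfies $|\lambda|\le 2\sqrt{k-1}$. This hypothesis matters: in a bipartite graph $\lambda=-k$ would give a second zero eigenvalue of $L_H$, and $H$ would be disconnected. With it, $\lambda^2\le 4(k-1)$, so
\[
\mu_2(H)\ge k+\frac{k}{k-1}-4=k-\frac{k-2}{k-1}-2 .
\]
Feng--Li with $d=\kappa=k$ gives $\sqrt{(d-1)/(\kappa-1)}=1$, so their main term is also $k-\frac{k-2}{k-1}-2$. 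The two bounds coincide.

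\textbf{Tightness and the expected obstacle.} It remains to show that Feng--Li's error term $\frac{1}{t+1}\left(2-\frac{1}{k-1}\right)$ tends to $0$ along the family. That requires hyperedges at distance $\ge 2t+2$ with $t\to\infty$. The hypergraphs have bounded degree and $n\to\infty$ by condition 1 of Definition~\ref{def:expander-hypergraphs}. A hyperedge meets only boundedly many others, so the hyperedge-distance diameter is $\Omega(\log n)\to\infty$. Combining, $\mu_2(H)$ is squeezed between our lower bound and a Feng--Li upper bound converging to the same value, which gives asymptotic tightness. I expect this step to be the main obstacle. The distance condition has to be stated correctly in terms of hyperedge paths rather than vertex paths, and the non-bipartiteness needed for $|\lambda_n|\le 2\sqrt{k-1}$ must be secured.
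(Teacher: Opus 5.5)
Your proposal is correct and follows essentially the paper's route. The Ramanujan bound on the spectrum of $A_{G(k,i)}^2-kI$ gives a lower bound that coincides with the main term of the Feng--Li bound at $d=\kappa=k$. You work with $L_H$ rather than $\calL_H$, which is equivalent since $D_H=kI$.

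Your added care fills in points the paper's proof leaves implicit. First, you require $|\lambda|\le 2\sqrt{k-1}$ for every nontrivial eigenvalue, i.e.\ non-bipartite LPS graphs; this is needed because the spectrum depends on $\lambda^2$, whereas the paper's Lemma only invokes $\lambda_2\le 2\sqrt{k-1}$. Second, your $\Omega(\log n)$ hyperedge-diameter argument justifies that the Feng--Li error term vanishes, which the paper takes for granted.
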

\begin{proof}
    Lemmas \ref{thm:expander-1} and \ref{thm:expander-2} imply that $\evalNLH_2\left(H_{G(k,i)}^{(r)}\right)=\evalNLH_2\left(K_{G(k,i)}^{(r)}\right)\geq 1-\frac{3k-4}{k(k-1)}$ for all $r:0 \leq r \leq k-2$ and for all $i\in\calI$. Theorem \ref{alon-boppanna-bound} implies that
    \[\evalNLH_2\left(H_{G(k,i)}^{(0)}\right)=\evalNLH_2\left(K_{G(k,i)}^{(0)}\right)\leq 1-\frac{\kappa-2}{d(\kappa-1)}-\frac{2}{d}\sqrt{\frac{d-1}{\kappa-1}}=1-\frac{3k-4}{k(k-1)}\]
    for all $i\in\calI$. This completes the proof.
\end{proof}

\ignore{\color{blue}This does not help. In the classes $\scrH^{(r)}$ of expander hypergraphs, hyperedge size is upper-bounded by regular degree. Let us now discuss another class of expander hypergraphs where the hyperedge size is greater than the regular degree. Since the hyperedge size can be at most $n$, the regular degree can be at most $n$ in this case. Given $n$ and regular degree $\Delta<n$, let $H^{(0)}$ be the hypergraph consisting of $n$ vertices and a single hyperedge containing all these $n$ vertices. If $A$ is the adjacency matrix of complete $2$-graph on $n$ vertices, then adjacency matrix of $H^{(0)}$ is $A^{(0)}=\frac{1}{n-1}A$. The matrix $A$ has eigenvalue $n-1$ with multiplicity $1$ and eigenvalue $0$ with multiplicity $n-1$. Therefore, adjacency matrix $A^{(0)}$ has eigenvalue $1$ with multiplicity $1$ and eigenvalue $0$ with multiplicity $n-1$. The hypergraph $H^{(0)}$ is $1$-regular. Construct hypergraph $H^{(1)}$ from $H^{(0)}$ as follows. The vertex set of $H^{(1)}$ is same as that of $H^{(0)}$, and replace hyperedge of $H^{(0)}$ by all possible $n$ subsets (new hyperedges) of size $n-1$. The hypergraph $H^{(1)}$ is $(n-1)$-regular and $(n-1)$-uniform. The adjacency matrix of $H^{(1)}$ is $A^{(1)}=A$. It implies that normalized Laplacian of $H^{(1)}$ is $\calL^{(1)}=I-\frac{1}{n-1}A$ where $I$ is identity matrix. Therefore, the second smallest eigenvalue of $\calL^{(1)}$ is $1$.

For $r\geq 1$ and $\frac{(n-1)!}{(n-r-1)!}\leq(n-r)$ in general, construct hypergraph $H^{(r)}$ from $H^{(r-1)}$ as follows. The vertex set of $H^{(r)}$ is same as that of $H^{(r-1)}$, and replace each hyperedge of $H^{(r-1)}$ by $n-(r-1)$ subsets of size $n-r$. The adjacency matrix of $H^{(r)}$ is $A^{(r)}=\frac{(n-2)!}{(n-r-1)!}A$. The hypergraph $H^{(r)}$ is regular with degree $\frac{(n-1)!}{(n-r-1)!}$ and $(n-r)$-uniform. Therefore, normalized Lapalacian of $H^{(r)}$ is $\calL^{(r)}=I-\frac{1}{n-1}A$. Therefore, $\calL^{(r)}$ has eigenvalue $0$ with multiplicity $1$ and eigenvalue $1$ with multiplicity $n-1$. Cheeger Inequality \eqref{soft-bound} implies that hypergraph $H^{(r)}$ is an expander hypergraph for given $n$ and $\Delta$ where $r\geq 1$ and $\frac{(n-1)!}{(n-r-1)!}\leq(n-r)$.}

\ignore{\section{An Improved Lov{\'a}sz-Simonovits Theorem for Uniform Hypergraphs}
\label{sec:ls-theorem}
Lov{\'a}sz and Simonovits~\cite{lovasz1990mixing,lovasz1993random} viewed a random walk on 2-graphs as a diffusion and characterized the rate of convergence in terms of the conductance. Kamal and Bagchi~\cite{kamal2024lovasz} gave a similar result for hypergraphs by showing that the lazy diffusion process described by the matrix $\calD_H$ (see Section~\ref{sec:prelims:definitions:algebraic}) always converges rapidly with rate bounded by $\phih^2(H)/8$. We improve this result by showing that when viewed through the lens of the diffusion conductance, the lazy diffusion can be shown to converge at a rate bounded by $\newphi^2(H)/2$. From~\eqref{eq:banerjee} we know that $\newphi(H)$ varies between $\phih(H)$ and $\Upsilon_H\phih(H)/2$. For hypergraphs like the hyperbicycle $C_{2n,k}$ (see Section~\ref{section:cheeger-inequality-tightness-upper-bicycle}) that are tight on the lower side of~\eqref{eq:banerjee} our result signifies an improvement of a constant factor of 16 but for hypergraphs like the cube hypergraph $H_{\ell,k}$ (see Section~\ref{section:cheeger-inequality:tightness:lower}) that are tight on the upper side, the improvement is by a factor of $\Upsilon_H^2$ which is very significant, especially in cases where $\Upsilon_H$ is $\Omega(1)$ w.r.t. $n$.


In order to present the main theorem of this section, we first define the Lov{\'a}sz-Simonovits curve that was introduced in~\cite{lovasz1990mixing}. Given a probability distribution $\bfxi^{(0)}$ over the vertex set of hypergraph $H=(V_H,E_H,w_H)$, define $\bfxi^{(\ell)}=\calD_H^{\ell}\bfxi^{(0)}$ for all integers $\ell\geq 1$. After the $\ell$-th iteration, let the vertices $v_1,~v_2,~\ldots,~v_n$ be sorted such that $\frac{\bfxi^{(\ell)}(v_i)}{d_H(v_i)}\geq\frac{\bfxi^{(\ell)}(v_j)}{d_H(v_j)}$ if $i\leq j$. The Lov{\'a}sz-Simonovits curve $\bfI_h^{(\ell)}:[0,\Delta(H)]\longrightarrow [0,1]$ is defined by $\bfI_h^{(\ell)}(0)=0$ and $\bfI_h^{(\ell)}(\vol_H(S_i))=\sum_{j=1}^i\bfxi^{(\ell)}(v_j)$ for all non-negative integers $\ell$ and for all $i\in [n]$ where $S_i=\{v_1,~v_2,~\ldots,~v_i\}$. For all $i\in [n]$ and $\omega:\vol_H(S_{i-1})<\omega<\vol_H(S_i)$, the Lov{\'a}sz-Simonovits curve is defined by linear interpolation as:
\[\bfI_h^{(\ell)}(\omega)=\frac{(\vol_H(S_i)-\omega)\bfI_h^{(\ell)}(\vol_H(S_{i-1}))+(\omega-\vol_H(S_{i-1}))\bfI_h^{(\ell)}(\vol_H(S_i))}{\vol_H(S_i)-\vol_H(S_{i-1})}\]
The points $\vol_H(S_1),~\sum_{j=1}^2\vol_H(S_j),~\ldots,~\sum_{j=1}^{n-1}\vol_H(S_j)$ are called hinge points of the Lov{\'a}sz-Simonovits curve $\bfI_h^{(\ell)}$ which is a piecewise linear and monotonically increasing concave function. We show the following theorem whose content is that the curve $\bfI_h^{(\ell)}$ undergoes a kind of concave averaging and converges to the line segment joining the points $(0,0)$ and $(\Delta(H),1)$ as $\ell$ tends to $\infty$.
\begin{theorem}[Lov{\'a}sz-Simonovits Theorem for uniform hypergraphs]\label{theorem:convergence-rate}
For a $\kappa$-uniform hypergraph $H$ with conductance $\newphi(H)$,  if $\bfI_h^{(\ell)}(\cdot)$ is the Lov{\'a}sz-Simonovits curve associated with the $\ell$-th iteration of the averaging-based diffusion begun from any initial probability distribution, then, for every integer $\ell > 0$ and hinge point $\omega$ of the Lov{\'a}sz-Simonovits curve  $\bfI_h^{(\ell)}(\cdot)$,
\begin{equation}\label{convergence-rate:eq6}
    \bfI_h^{(\ell)}(\omega_0)\leq\frac{1}{2}(\bfI_h^{(\ell-1)}(\omega_0-2\newphi(H) \omega_1)+\bfI_h^{(\ell-1)}(\omega_0+2\newphi(H) \omega_1)).
\end{equation}
where $\omega_1=\min\{\omega_0,\Delta(H)-\omega_0\}$.
\end{theorem}
The proof of Theorem~\ref{theorem:convergence-rate} is given in Section \ref{sec:ls-theorem-proof}. The next theorem follows from Theorem~\ref{theorem:convergence-rate}.
\begin{theorem}\label{thm:chord-bound}
For a $\kappa$-uniform hypergraph $H$ with conductance $\newphi(H)$, if $\ell\geq 1$ is an integer, and $\omega$ an integer satisfying $0< \omega< \Delta(H)$, and if $\bfI_h^{(\ell)}(\cdot)$ is the Lov{\'a}sz-Simonovits curve associated with the $\ell$-th iteration of the averaging-based diffusion begun from any initial probability distribution, then
\begin{equation}\label{equation:LS-bound}
  \left\lvert  \bfI_h^{(\ell)}(\omega) - \frac{\omega}{\Delta(H)} \right\rvert \leq \min \left\{\sqrt{\omega},~\sqrt{\Delta(H)-\omega}\right\}\left(1-\frac{\newphi^2(H)}{2}\right)^{\ell}
\end{equation}
\end{theorem}
Using $1 - x \leq e^{-x}$ in~\eqref{equation:LS-bound}, we see that the distance to the limit has an exponential decay whose rate is bounded by $\newphi^2(H)/2$, which establishes the improvement over the result of Kamal and Bagchi~\cite{kamal2024lovasz}.

\ignore{In Section~\ref{sec:algorithmic:local-clustering}}
Next, we show how this improved Lov{\'a}sz-Simonovits theorem gives an improved running time bound for a local clustering algorithm on uniform hypergraphs. \paragraph{The algorithm} Kamal and Bagchi~\cite{kamal2024lovasz} adapted for hypergraphs the personalized pagerank-based algorithm for local clustering given by Andersen, Chung, and Lang~\cite{andersen2007using} for $2$-graphs. The algorithm first finds a personalized PageRank vector with the query node taken as a seed vertex. Then, it returns the sweep cut with minimum conductance based on the sorted personalized PageRank vector. While~\cite{andersen2007using} used the random walk on the $2$-graph to compute personalized PageRank, the algorithm of~\cite{kamal2024lovasz} uses averaging-based diffusion on hypergraphs for this purpose.

\paragraph{Improved running time bound} Kamal and Bagchi~\cite{kamal2024lovasz} proved that the running time of their local clustering algorithm is $\calO\left(\frac{\Delta(H)}{\phih^2(H)} \cdot \ln \frac{\Delta(H)}{d_H(s)}\right)$.  Theorem~\ref{theorem:convergence-rate} implies that the running of the local clustering algorithm is actually $\calO\left(\frac{\Delta(H)}{\newphi^2(H)} \cdot \ln \frac{\Delta(H)}{d_H(s)}\right)$. This is a better running time since $\newphi(H) \geq \phih(H)$. This improvement can be as high as $\Upsilon_H^2$ for hypergraphs for which Eq.~\eqref{eq:banerjee} is tight on the upper side. In particular, $\newphi(H_{\ell,k})=k\phih(H_{\ell,k})$ for $(\ell, k)$-cube hypergraph $H_{\ell,k}$ defined in Section~\ref{section:cheeger-inequality:tightness:lower} which implies that running time improves by a factor of $k^2$.

We note that our contribution here is only an improvement in the running time of the algorithm. The output and quality of the solution remain the same as in~\cite{kamal2024lovasz}.


\ignore{
Comparing Eq.~\eqref{equation:LS-bound} with the following, proved by Kamal and Bagchi~\cite{kamal2024lovasz}.
\begin{equation}\label{equation:LS-bound-sigmod}
\left\lvert  \bfI_h^{(\ell)}(\omega) - \frac{\omega}{\Delta(H)} \right\rvert \leq \min \left\{\sqrt{\omega},~\sqrt{\Delta(H)-\omega}\right\}\left(1-\frac{\phih^2(H)}{8}\right)^{\ell}
\end{equation}
Since $\frac{\phih^2(H)}{8}<\frac{\newphi^2(H)}{2}$, so we have proved a higher rate of convergence for Lov{\'a}sz-Simonovits curve in the case of uniform hypergraphs. Since $\newphi(H_{\ell,k})=k\phih(H_{\ell,k})$ for hyperhypercube $H_{\ell,k}$ defined in Section \ref{section:cheeger-inequality:tightness:lower}. Therefore, Eq.~\eqref{equation:LS-bound} improves the rate of convergence by a factor of $4k^2$ for $H_{\ell,k}$ as compared with Eq.~\eqref{equation:LS-bound-sigmod}. We also have $\newphi(C_{2n,k})=\phih(C_{2n,k})$ for hyperbicycle $C_{2n,k}$ defined in Section \ref{section:cheeger-inequality:tightness:upper}. Therefore, Eq.~\eqref{equation:LS-bound} improves the rate of convergence by a factor of $4$ for $C_{2n,k}$ as compared with Eq.~\eqref{equation:LS-bound-sigmod}.
}
}
\section{Tightness results}
\label{section:cheeger-inequality:tightness}

We now show that Theorem~\ref{thm:cheeger-ineq-general} is tight on the lower side and Theorem~\ref{thm:cheeger-ineq-hypergraph} is tight on both sides if we consider both the rank and the number of nodes as parameters. As a byproduct, we will show that Eq.~(\ref{eq:banerjee}) is tight on both sides, i.e., that there are examples for which the diffusion conductance is precisely equal to the conductance. There are examples for which the diffusion conductance is $\Theta(\Upsilon_H)$ times the conductance.

\subsection{Tightness example for the lower bounds of Theorem~\ref{thm:cheeger-ineq-general} and Theorem~\ref{thm:cheeger-ineq-hypergraph}: The cube hypergraph}
\label{section:cheeger-inequality:tightness:lower}
In 2-graphs, the hypercube is known to be a tight example for the lower bound of Cheeger's inequality. Here, we find that a generalization called the cube hypergraph is tight for the lower bound of both Cheeger's inequalities for hypergraphs and non-covering hypergraphs. As a bonus, this hypergraph also proves the tightness of the upper bound of the relationship between the two versions of conductance, i.e., Eq.~\eqref{eq:banerjee}.

We now formally define the {\em $(\ell, k)$-cube hypergraph}, $H_{\ell,k}$, for $k \geq 2$ and $\ell \geq 1$. The $(1, k)$-cube hypergraph $H_{1,k}$ is a hypergraph with $k$ vertices and a single hyperedge containing all $k$ vertices.   For $\ell > 1$ we require the following notation: If $\bm{z} \in \bbR^{\eta}$ then for any $i \in [\eta]$, $\bm{z}^{(i)} \in \bbR^{\eta-1}$ is the $(\eta-1)$-dimensional vector obtained by omitting the $i$-th coordinate of $\bm{z}$. Now we define $H_{\ell,k} = (V,E)$ where $V = \bbZ_k^\ell$. For each $\bm{x} \in \bbZ_k^{\ell -1}$ and each $i \in [\ell]$, we define hyperedge  $e_{\bm{x},i} = \{\bm{u}\in\bbZ_k^\ell: \bm{u}^{(i)} = \bm{x}, \bm{u}(i)\in \bbZ_k\}$. And $E = \{e_{\bm{x},i}: \bm{x} \in \bbZ_k^{\ell -1},i \in [\ell]\}.$ Note that the number of vertices $n = |V| = k^\ell$.

In order to find an upper bound on the conductance of $H_{\ell,k}$ we consider $S \subseteq V$ defined by $$S = \{\bm{u}\in\bbZ_k^\ell: \bm{u}(1)\in \bbZ_{k/2}\},$$
assuming that $k$ is even for simplicity of presentation. There are a total of $k^{\ell-1}$ hyperedges $e_{\bm{x},1}$ that intersect with both $S$ and $V\setminus S$ where $\bm{x} \in \bbZ_{k}^{\ell -1}$. Any such hyperedge has an equal number of $\frac{k}{2}$ vertices in each partition. Therefore, we get that conductance is at most
\[\newphi(H_{\ell,k}) \leq \newphi(S)  = \frac{k^{l-1}\cdot\frac{1}{k-1}\cdot\frac{k^2}{4}}{\ell \cdot\frac{n}{2}}=\calO\left(\frac{1}{\ell}\right)=\calO\left(\frac{\log k}{\log n}\right).\]
This also implies that 
\[\phih(H_{\ell,k}) \leq \phih(S)  = \frac{k^{l-1}}{\ell \cdot\frac{n}{2}}=\calO\left(\frac{1}{k\ell}\right)=\calO\left(\frac{\log k}{k\log n}\right).\]
The results of Banerjee on the spectrum of the adjacency matrix can be used to determine $\evalNLH_2(H_{\ell,k})$~\cite{banerjee2021spectrum}. We explicitly calculate this quantity using a similar approach, based on the following observation. We first note that the adjacency matrix of $H_{1,k}$ coincides with that of the complete $2$-graph $K_k$ scaled by a factor of $1/(k-1)$. This can be extended as follows.
\begin{proposition}
\label{clm:clique-hypergraph}  
  For $\ell \geq 1$, let $K_k^{(i)}$, $i \in [\ell]$ be copies of the $k$-clique and let $K_{\ell,k} = \prod_{i \in [\ell]} K_k^{(i)}$ be the graph product of these copies. Then
  $$\evalNLH_2\left(H_{\ell,k}\right) = \evalNLG_2\left(K_{\ell,k}\right)=\Theta\left(\frac{1}{l}\right)=\Theta\left(\frac{\log k}{\log n}\right)$$
\end{proposition}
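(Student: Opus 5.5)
The plan is to identify the AE graph of $H_{\ell,k}$ with a scaled copy of $K_{\ell,k}$ and then compute the spectrum of the product graph explicitly. By Lemma~\ref{lemma:graph-equivalence}, $H_{\ell,k}$ has the same adjacency and normalized Laplacian as its AE graph $G_{H_{\ell,k}}$, which replaces each hyperedge $e_{\bm{x},i}$ by a $k$-clique with edge weights $\frac{1}{k-1}$.

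Two distinct vertices $\bm{u},\bm{v}\in\bbZ_k^\ell$ lie in a common hyperedge exactly when they differ in a single coordinate $i$. In that case the only such hyperedge is $e_{\bm{u}^{(i)},i}$. Hence
\[
A_{H_{\ell,k}}=\frac{1}{k-1}A_{K_{\ell,k}},
\]
where $K_{\ell,k}$ is the Cartesian product of $\ell$ copies of $K_k$ (the Hamming graph). Both graphs are regular: $H_{\ell,k}$ has degree $\ell$, since each vertex lies in one hyperedge per direction, and $K_{\ell,k}$ has degree $\ell(k-1)$. So $D_{H_{\ell,k}}=\frac{1}{k-1}D_{K_{\ell,k}}$. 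Since the normalized Laplacian is invariant under uniform scaling of weights, $\calL_{H_{\ell,k}}=\calL_{K_{\ell,k}}$, which gives the first equality.

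Next I compute $\evalNLG_2(K_{\ell,k})$. The adjacency matrix of a Cartesian product is the Kronecker sum
\[
A_{K_{\ell,k}}=\sum_i I\otimes\cdots\otimes A_{K_k}\otimes\cdots\otimes I.
\]
Its eigenvalues are therefore sums $\sum_i\lambda_{j_i}$ with each $\lambda_{j_i}\in\{k-1,-1\}$. The largest is $\ell(k-1)$. The second largest is $(\ell-1)(k-1)-1=\ell(k-1)-k$, obtained by taking one factor equal to $-1$. Because the graph is $\ell(k-1)$-regular, $\calL=I-\frac{1}{\ell(k-1)}A$, and so
\[
\evalNLG_2=\frac{k}{\ell(k-1)}.
\]
For $k\ge2$ this lies between $\frac1\ell$ and $\frac2\ell$, so it is $\Theta(1/\ell)$. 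Finally, $n=k^\ell$ gives $\ell=\log n/\log k$, which yields $\Theta(\log k/\log n)$.

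The only step needing care is the hyperedge intersection claim: any two distinct vertices share at most one hyperedge, and only when they differ in exactly one coordinate. This is what makes the AE graph exactly a scaled Hamming graph with no multiplicities. I expect this to be routine; everything else is the standard Kronecker-sum eigenvalue computation.
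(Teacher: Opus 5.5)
Your proof is correct and follows essentially the same route as the paper: show $A_{H_{\ell,k}}=\frac{1}{k-1}A_{K_{\ell,k}}$ (hence equal normalized Laplacians), then read off the second eigenvalue of the product of cliques. The differences are cosmetic. You verify the adjacency identity directly from the coordinate description, whereas the paper uses its recursive copy-and-glue description and simply asserts it. You also use the Kronecker sum of adjacency matrices where the paper cites the Laplacian spectrum of Cartesian products; since the graph is $\ell(k-1)$-regular, both give $\evalNLG_2(K_{\ell,k})=\frac{k}{\ell(k-1)}$.
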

\begin{proof}
    The definition of $H_{\ell,k}$ given in Section~\ref{section:cheeger-inequality:tightness:lower} implies that $H_{1,k}$ consists of a single hyperedge containing $k$ vertices $0,~1,~\ldots,~i,~\ldots,~k-1$. The hypergraph $H_{2,k}$ is obtained from $H_{1,k}$ as follows. The hypergraph $H_{2,k}$ consists of $k$ copies of $H_{1,k}$. Let these $k$ copies of $H_{1,k}$ be denoted by $H_{1,k}^{(1)},~H_{1,k}^{(2)},~\ldots,~H_{1,k}^{(k)}$. Insert $k$ new hyperedges $e_1,~e_2,~\ldots,~e_i,~\ldots,~e_k$ as follows. The hyperedge $e_i$ consists of $k$ copies of vertex $i$ from $H_{1,k}^{(1)},~H_{1,k}^{(2)},~\ldots,~H_{1,k}^{(k)}$. Similarly, $H_{3,k}$ is obtained from $k$ copies $H_{2,k}^{(1)},~H_{2,k}^{(2)},~\ldots,~H_{2,k}^{(k)}$ of $H_{2,k}$ by inserting new $k^2$ hyperedges. In general, $H_{\ell,k}$ is obtained from $k$ copies $H_{\ell-1,k}^{(1)},~H_{\ell-1,k}^{(2)},~\ldots,~H_{\ell-1,k}^{(k)}$ of $H_{\ell-1,k}$ by inserting new $k^{l-1}$ hyperedges. 

    Let $K_k$ denote the complete $2$-graph with $k$ vertices. For $\ell \geq 1$ and $i \in [\ell]$, let $K_k^{(i)}$ be $\ell$ copies of $K_k$. Define $K_{\ell,k} = \prod_{i \in [\ell]} K_k^{(i)}=K_{\ell-1,k}\times K_k$ for all $l\geq 2$. The Laplacian of $K_k$ has eigenvalue $0$ with multiplicity one and eigenvalue $k$ with multiplicity $k-1$. It implies that $k$ is the second smallest eigenvalue of the Laplacian of $K_{\ell,k}$ for all positive integers $l$~\cite{merris:1994}. The degree of each vertex of $K_{\ell,k}$ is $l(k-1)$. Therefore, second smallest eigenvalue of normalized Laplacian of $K_{\ell,k}$ is $\Theta\left(\frac{1}{l}\right)=\Theta\left(\frac{\log k}{\log n}\right)$.

    It is important to note that adjacency matrix of $H_{\ell,k}$ is $\frac{1}{k-1}$ times that of $K_{\ell,k}$ for all positive integers $l$. Therefore, the normalized Laplacian of $H_{\ell,k}$ is the same as that of $K_{\ell,k}$ for all positive integers $l$. It implies that second smallest eigenvalue of normalized Laplacian of $H_{\ell,k}$ is $\evalNLH_2(H_{\ell,k})=\evalNLG_2(K_{\ell,k})=\Theta\left(\frac{1}{l}\right)=\Theta\left(\frac{\log k}{\log n}\right)$.
\end{proof}

This proves that $H_{\ell,k}$ is a tight example for the lower bounds of Theorem~\ref{thm:cheeger-ineq-general} and Theorem~\ref{thm:cheeger-ineq-hypergraph}. Moreover, $\newphi(H_{\ell,k}) = k\phih(H_{\ell,k}) = \calO\left(\frac{\log k}{\log n}\right)$. Therefore, the hypergraph $H_{\ell,k}$ is also a tight example for the upper bound of Eq.~\eqref{eq:banerjee}.

\subsection{Tightness example for the upper bound of the diffusion conductance Cheeger's inequality Theorem~\ref{thm:cheeger-ineq-hypergraph}: The cycle hypergraph}
\label{section:cheeger-inequality:tightness:upper}
Unlike the Cheeger's inequality for conductance (Theorem~\ref{thm:cheeger-ineq-general}), the Cheeger's inequality for diffusion conductance (Theorem~\ref{thm:cheeger-ineq-hypergraph}) is tight on the upper side even if we consider the rank as a parameter. In $2$-graphs, the cycle provides tightness on the upper side of Cheeger's inequality, and here, a natural generalization of the cycle plays the same role.

Consider the following hypergraph that we call the {\em $(n,k)$-cycle hypergraph}, $C_{n,k}$, defined for $n \geq 3$ and $k:2\leq k<\frac{n}{2}$: $V = \bbZ_n$, $E = \{ \{i, i+1 \mod n,\ldots,i+k-1 \mod n\} : 0 \leq i < n\}$. When $k =2$, this is simply the 2-graph known as the $n$-cycle. We show in Proposition \ref{thm:cycle-conductance} that for any $i \in \bbZ_n$, the set $S_i = \{i, (i+1)\mod n, \ldots, (i + n/2-1)\mod n\}$ satisfies $S_i=\arg\min_{S:\emptyset\subsetneq S\subsetneq V} \newphi\left(S\right)$ where we assume for simplicity of the presentation that $n$ is even.

\begin{proposition}\label{thm:cycle-conductance}
    \begin{equation}\label{eq:cycle-conductance}
        \newphi(C_{n,k})=\frac{4\sum_{i=1}^{k-1}i(k-i)}{nk(k-1)} = \Theta\left(\frac{k^3}{nk^2}\right) = \Theta\left(\frac{k}{n}\right)
    \end{equation}
\end{proposition}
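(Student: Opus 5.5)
We compute $\newphi$ through the AE graph: by Lemma~\ref{lemma:conductance-equivalence}, $\newphi(S)=\phig(S)$ in $G_{C_{n,k}}$. Take unit hyperedge weights. Every vertex lies in exactly $k$ windows, so $C_{n,k}$ is $k$-regular and $\vol(S)=k|S|$. Two vertices at cyclic distance $d$ with $1\le d\le k-1$ lie together in exactly $k-d$ windows. Hence $G_{C_{n,k}}$ is the circulant graph on $\bbZ_n$ whose edge $\{u,u+d\}$ has weight $w_d=\frac{k-d}{k-1}$ for $1\le d\le k-1$; here $k<n/2$ guarantees that distances $d$ and $n-d$ never coincide. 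For $S\subseteq\bbZ_n$ let $c_d(S)=|S\,\triangle\,(S+d)|$, which is the number of pairs $\{u,u+d\}$ cut by $S$. Then
\[
\newphi(S)=\frac{\sum_{d=1}^{k-1}w_d\,c_d(S)}{k\min(|S|,n-|S|)} .
\]
Since $c_d(S)=c_d(V\setminus S)$, we may assume $|S|\le n/2$ throughout.

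\emph{Upper bound.} For the arc $S_i$ of $n/2$ consecutive vertices, the cut pairs at distance $d$ are exactly those straddling one of its two endpoints. There are $d$ of them at each end, so $c_d(S_i)=2d$. Alternatively, count per hyperedge: at each end, for each $j=1,\dots,k-1$ there is exactly one window with $j$ vertices in $S_i$, contributing $j(k-j)/(k-1)$. Either way,
\[
\newphi(S_i)=\frac{2\sum_{j=1}^{k-1}j(k-j)/(k-1)}{k\cdot n/2}=\frac{4\sum_{j=1}^{k-1}j(k-j)}{nk(k-1)}.
\]
Using $\sum_{j=1}^{k-1}j(k-j)=\frac{k(k^2-1)}{6}$, this equals $\frac{2(k+1)}{3n}=\Theta(k/n)$, which gives the asymptotic statement.

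\emph{Lower bound (main obstacle).} We must show that every $S$ with $|S|\le n/2$ has a ratio at least as large. Because all $w_d\ge 0$, it suffices to prove, separately for each $d\le k-1$, the inequality
\[
c_d(S)\;\ge\;\frac{4d\,|S|}{n}.
\]
This holds with equality for the half-arc. Summing it with weights $w_d$ then gives $\newphi(S)\ge\newphi(S_i)$ exactly.

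To prove the per-distance inequality we combine two facts.
\begin{itemize}
\item Averaging identity: $\frac1n\sum_{t\in\bbZ_n}c_t(S)=2|S|\bigl(1-\frac{|S|}{n}\bigr)$, since each ordered pair $(u,v)$ with $u\in S$, $v\notin S$ is counted for exactly one shift $t$ in each direction.
\item Triangle inequality: $c_{a+b}(S)\le c_a(S)+c_b(S)$, which iterates to $c_t(S)\le\lceil |t|/d\rceil\,c_d(S)$ for cyclic distance $|t|\le n/2$.
\end{itemize}
Plugging the second into the first gives
\[
2|S|\Bigl(1-\frac{|S|}{n}\Bigr)\le \frac{c_d(S)}{n}\sum_t\lceil |t|/d\rceil\approx \frac{n}{4d}\,c_d(S),
\]
hence $c_d(S)\gtrsim \frac{8d|S|(1-|S|/n)}{n}\ge \frac{4d|S|}{n}$, using $|S|\le n/2$.

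The hard step is making this exact rather than approximate. The rounding in $\sum_t\lceil |t|/d\rceil$ loses lower-order terms, and the slack factor $2(1-|S|/n)\ge1$ is tight only at $|S|=n/2$. If the rounding cannot be absorbed by this slack, I would fall back to a combinatorial argument. Decompose $S$ into maximal runs and restrict to each of the $g=\gcd(n,d)$ cosets of $d\bbZ_n$. Each coset is a cycle of length $n/g$ under the shift by $d$. Whenever $S$ meets a coset in a nonempty proper subset, that coset contributes at least $2$ to $c_d(S)$. The task then reduces to showing that, when $|S|\le n/2$, at least $\min(g,\cdot)$ cosets are split in the required proportion, which again matches the half-arc configuration. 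Either route gives $\newphi(C_{n,k})=\newphi(S_i)$, proving~\eqref{eq:cycle-conductance}.
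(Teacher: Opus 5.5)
Your evaluation at the half-arc is correct and is the same computation the paper does. This includes $\sum_{j=1}^{k-1}j(k-j)=k(k^2-1)/6$, which gives the value $\frac{2(k+1)}{3n}$. The lower bound, however, rests on a false lemma. The per-distance inequality $c_d(S)\ge 4d|S|/n$ fails for every $d\ge 2$. Any progression $\{a,a+d,\dots,a+(m-1)d\}$ has $c_d(S)\le 2$. Concretely, for $n$ even (the paper's standing assumption), the even residues have $|S|=n/2$ and $c_2(S)=0$, while your bound demands $c_2(S)\ge 4$. So the reduction already breaks for $k\ge 3$.

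The error enters at the iterated triangle inequality. Subadditivity gives only $c_{qd}(S)\le q\,c_d(S)$. For $t$ not a multiple of $d$, a remainder term $c_{t\bmod d}(S)$ appears that $c_d(S)$ does not control: for the even residues, $c_1(S)=n$ but $c_2(S)=0$. So the obstacle is not rounding; the inequality is false even up to lower-order terms. The coset fallback cannot repair it either, since a union of cosets of $\langle d\rangle$ splits no coset and has $c_d(S)=0$.

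What is missing is an argument that keeps the distances coupled. For the even residues, the weighted sum $\sum_d w_d c_d(S)$ is still large because $c_1(S)=n$ more than compensates for $c_2(S)=0$. The paper argues at exactly this level: it asserts, informally, that sets of consecutive vertices minimize the numerator, and then compares arcs. To make that rigorous you need a circular rearrangement (edge-isoperimetric) inequality that exploits the fact that $w_d=\frac{k-d}{k-1}$ is nonincreasing in $d$. The statement required is that among all $S$ with $|S|=m$, an arc minimizes $\sum_d w_d c_d(S)$ (equivalently, maximizes the total weight of pairs inside $S$). For an arc with $m\le n/2$, the numerator equals $\sum_{d=1}^{k-1}w_d\cdot 2\min(d,m)$. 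This is concave in $m$ and vanishes at $m=0$, so its ratio to $km$ is nonincreasing and is minimized at $m=n/2$. That gives $\newphi(S)\ge\newphi(S_i)$ for every $S$.

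A partial elementary substitute uses the window counts $s_j=|S\cap e_j|$ with $e_j=\{j,\dots,j+k-1\}$. The numerator equals $\frac{1}{k-1}\sum_j s_j(k-s_j)$, and consecutive counts differ by at most $1$. Whenever some window lies inside $S$ and another inside $V\setminus S$, every value $1,\dots,k-1$ is attained at least twice, and the bound follows at once. Sets with no window inside $S$ (or inside its complement) would still need a separate estimate.
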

\begin{proof}
    If we take $S_j$ as the set of any $j$ consecutive vertices on the cycle, then $\min(\vol(S_j),\vol(V\setminus S_j))\leq\min(\vol(S_{\frac{n}{2}}),\vol(V\setminus S_{\frac{n}{2}}))$ and $\sum_{e:e\in \partialh(S_j)}\frac{1}{|e|-1}|e\cap S_j||e\cap (V\setminus S_j)|$ does not decrease proportionally to the same extent for $j\neq \frac{n}{2}$. In addition, if we take $T_j$ as the set of $j$ non-consecutive vertices, then $\sum_{e:e\in \partialh(S_j)}\frac{1}{|e|-1}|e\cap S_j||e\cap (V\setminus S_j)|\leq\sum_{e:e\in \partialh(T_j)}\frac{1}{|e|-1}|e\cap T_j||e\cap (V\setminus T_j)|$. 

    It implies that $\newphi(C_{n,k})$ is given by the following partition of the vertices of $C_{n,k}$. We divide vertex set of $C_{n,k}$ into two parts such that $(i+1)\mod n, (i+2)\mod n, \ldots, (i + n/2)\mod n$ are in one part and $(i+n/2+1)\mod n, (i+n/2+2)\mod n, \ldots, (i + n)\mod n$ are in the other part where $i\in\bbZ_n$. Then, there are a total of $2(k-1)$ hyperedges across this partition. Therefore, conductance $\newphi(C_{n,k})$ is
    \[\frac{4\sum_{i=1}^{k-1}i(k-i)}{nk(k-1)}\]
\end{proof}

We also have
\begin{equation}
  \label{eq:cycle-conductance-factor-k}
  \phih(C_{n,k}) = \frac{4(k-1)}{nk} = \Theta\left(\frac{1}{n}\right).
  \end{equation}

In order to compute the eigenvalues of the $\calL_{C_{n,k}}$ we note first the adjacency matrix of $C_{n,k}$ is same as that of the weighted additive Cayley $2$-graph $G=(\bbZ_n,S,\omega)$ where $S=\{\pm 1,~\pm 2,~\ldots,~\pm (k-1)\}$. We write $-1$ rather than $n-1$ to simplify the definition of the mapping $\omega$. The mapping $\omega$ assigns weight$\frac{k-|s|}{k-1}$ to $s\in S$ and weight $0$ to the elements in $V\setminus S$ such that $\omega(s)=\omega(-s)$ for all $s\in S$. Note that $\rchi_r(x)=\exp(\frac{2\pi xri}{n})$ is a character of $G=(\bbZ_n,S,\omega)$ for all $r=0,~1,~\ldots,~(n-1)$. Therefore, the second-largest eigenvalue of the adjacency matrix is
\begin{align*}
    \sum_{\underset{\hspace{-0.75cm}j\neq 0}{j=-(k-1)}}^{k-1}\frac{k-|j|}{k-1}\rchi_1(j)&=\frac{2}{k-1}\sum_{j=1}^{k-1}(k-j)\cos{\frac{2\pi j}{n}}\\
    &= \frac{2}{k-1}\sum_{j=1}^{k-1}(k-j)\left(1-\Theta\left(\frac{j^2}{n^2}\right)\right)\\
    &=k-\Theta\left(\frac{1}{n^2}\right)\frac{2}{k-1}\sum_{j=1}^{k-1}(k-j)j^2
\end{align*}
It implies that $\evalNLH_2(C_{n,k})=\Theta\left(\frac{1}{n^2}\right)\frac{2}{k(k-1)}\sum_{j=1}^{k-1}(k-j)j^2=\Theta\left(\frac{k^2}{n^2}\right)$.

\ignore{In order to compute the eigenvalues of the $\calL_{C_{n,k}}$ we note first that its adjacency matrix is the same as that of the weighted 2-graph $G = (V,E,w)$ with $V = \bbZ_n$, $E = \{\{i, j\} \in \bbZ_n^2: 1\leq |i - j| \leq k-1\}$, and for each $\{i,j\} \in E$, $w(\{i,j\}) = (k - |i - j|)/(k-1)$. The $2$-graph $G$ can be viewed as a weighted Cayley graph on the additive group $\bbZ_n$ with generators $S=\{\pm 1,~\pm 2,~\ldots,~\pm (k-1)\}$ and weight function $w(s) = \frac{k-|s|}{k-1}$ for each $s \in S$. Since $\rchi_r(x)=\exp(\frac{2\pi xri}{n})$ are the characters of the group for $0 \leq r \leq k-1$ and the characters are the eigenvectors of the adjacency matrix (and the Laplacian), we use $\rchi_1$ to compute
\ignore{\[ \evalNLH_2(C_{n,k}) \leq \frac{1}{k(k-1)}\left(\frac{2\pi}{n}\right)^2\sum_{j=1}^{k-1}(k-j)j^2 = \calO\left(\frac{k^4}{n^2k^2}\right) = \calO\left(\frac{k^2}{n^2}\right). \]}
\[ \evalNLH_2(C_{n,k}) = \Theta\left(\frac{k^2}{n^2}\right). \]}
Putting this together with~\eqref{eq:cycle-conductance}, we see that $C_{n,k}$ is a tight example for the upper bound of the diffusion conductance Cheeger inequality Theorem~\ref{thm:cheeger-ineq-hypergraph}. \ignore{The details of the eigenvalue calculation are given in Section~\ref{section:cheeger-inequality:tightness:upper-proof}.} This also proves that $C_{n,k}$ is a tight example for the upper bounds of Corollary~\ref{thm:cheeger-ineq-hypergraph-cor} up to a factor of $k$.

\subsection{\texorpdfstring{Tightness example for the lower bound of Eq.~\eqref{eq:banerjee}: The hyperbicycle}{Tightness example for the lower bound of Eq. (Banerjee): The hyperbicycle}}
\label{section:cheeger-inequality-tightness-upper-bicycle}
\ignore{We now present an example that is tight up to a factor of $\sqrt{r_H}$ for the upper bound of Theorem~\ref{thm:cheeger-ineq-hypergraph}. }This example is one in which the conductance and diffusion conductance are equal, i.e., a tight example for the lower bound in Eq.~\eqref{eq:banerjee}.

\begin{figure}[htbp]
\includegraphics[width=\columnwidth]{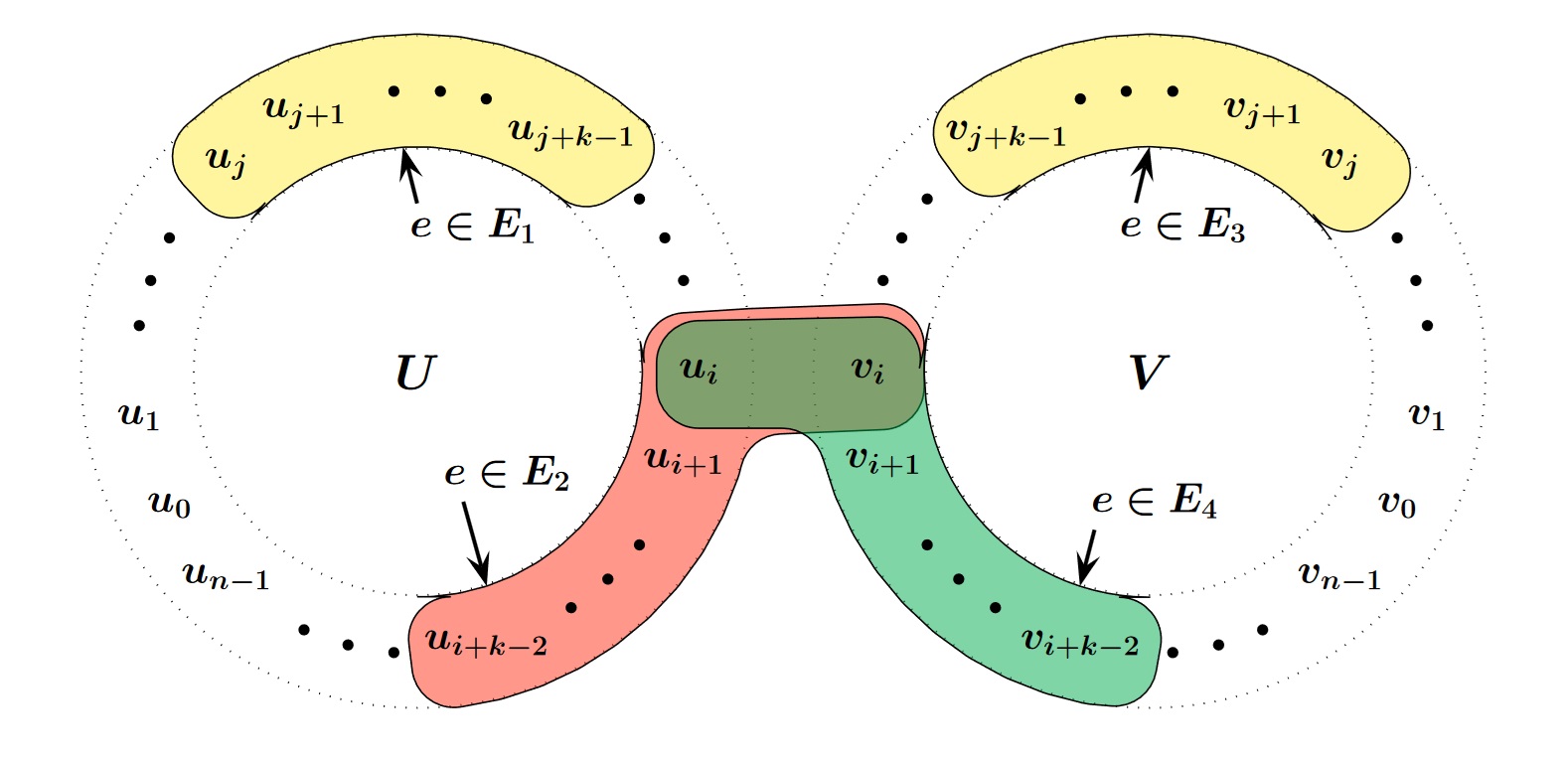}
\caption{The $(2n,k)$-hyperbicycle $C_{2n,k}$: only a few hyperedges are shown for illustration.}
\Description{The $(2n,k)$-hyperbicycle $C_{2n,k}$: only a few hyperedges are shown for illustration.}
\label{fig:hyperbicycle}
\end{figure}

Consider the following hypergraph that we call the {\em $(2n,k)$-hyperbicycle}, $C_{2n,k}$, defined for $n \geq 6$ and $k:3\leq k<\frac{n}{2}$. The vertex set of $C_{2n,k}$ is $V_{C_{2n,k}} = U\cup V$ where $U=\{u_0,~u_1,~\ldots,~u_{n-1}\}$ and $V=\{v_0,~v_1,~\ldots,~v_{n-1}\}$. The hyperedge set of $C_{2n,k}$ is $E_{C_{2n,k}}$ consisting of 4 disjoint sets: = $E_1, E_2, E_3$ and $E_4$. The sets $E_1$ and $E_3$ contain hyperedges $\{u_i, u_{i+1 \mod n}, \ldots, u_{i+k-1 \mod n}\}$  and $\{v_i, v_{i+1 \mod n}, \ldots, v_{i+k-1 \mod n}\}$ respectively for all $0 \leq i < n$. These we call {\em internal} hyperedges. The $i$-th hyperedge of $E_2$ contains a single vertex from $V$, $v_i$, and $k-1$ vertices from $U$, $u_i, u_{i+1 \mod n}, \ldots u_{i+k-2 \mod n}$, and $E_2$ contains such an edge for each $0 \leq i < n$. The hyperedges of $E_4$ symmetrically contain one vertex from $U$ and $k-1$ vertices from $V$.  We call $E_2$ and $E_4$ {\em cross hyperedges}. $C_{2n,k}$ is clearly a $k$-uniform hypergraph. See Figure~\ref{fig:hyperbicycle} for an illustration.

\ignore{
The sets $E_1,~E_2,~E_3,\text{ and }E_4$ of hyperedges are defined as follows
\begin{align*}
    E_1 &= \{ e_i : 0 \leq i \leq n-1\} \text{ where } e_i=\{u_j : j=i+\ell \mod n,~0 \leq \ell \leq k-1\}. \\
    E_2 &= \{ e_i : 0 \leq i \leq n-1\} \text{ where } e_i=\{v_i\}\cup\{u_j : j=i+\ell \mod n,~0 \leq \ell \leq k-2\}. \\
    E_3 &= \{ e_i : 0 \leq i \leq n-1\} \text{ where } e_i=\{v_j : j=i+\ell \mod n,~0 \leq \ell \leq k-1\}. \\
    E_4 &= \{ e_i : 0 \leq i \leq n-1\} \text{ where } e_i=\{u_i\}\cup\{v_j : j=i+\ell \mod n,~0 \leq \ell \leq k-2\}.
\end{align*}}
We assume that each cross hyperedge $e\in E_2\cup E_4$ has weight $w_1$ and that each internal hyperedge $e\in E_1\cup E_3$ has weight $w_2$, where $w_1=1/n$ and $w_2=1$. So each vertex $v\in V_{C_{2n,k}}$ has degree $k(w_1+w_2)$ i.e. $C_{2n,k}$ is $k(w_1+w_2)$-regular. It implies that $\vol_{C_{2n,k}}(U)=\vol_{C_{2n,k}}(V)=nk(w_1+w_2)$. We show in Proposition~\ref{thm:bicycle-conductance} that $U=\arg\min_{S:\emptyset\subsetneq S\subsetneq V_{C_{2n,k}}} \newphi\left(S\right)$. It is to be noted that the boundary of $U$,  $\partialh(U)$, is precisely the cross edges $E_2\cup E_4$. For each $e\in\partialh(U)$, either $|e\cap U|=1$ or $|e\cap U|=k-1$. Using these facts, we compute the conductance to be
\begin{equation}
  \label{eq:bicycle-newphi}
  \newphi(C_{2n,k}) = \frac{2\sum_{i=1}^{n}\frac{w_1}{k-1}(k-1)}{nk(w_1+w_2)} = \Theta\left(\frac{w_1}{k(w_1+w_2)}\right)=\Theta\left(\frac{1}{kn}\right).
  \end{equation}
Also, $|\partialh(U)|=2n$ implies that
\begin{equation}
  \label{eq:bicycle-phih}
  \phih(C_{2n,k}) = \frac{2nw_1}{nk(w_1+w_2)} = \Theta\left(\frac{w_1}{k(w_1+w_2)}\right)=\Theta\left(\frac{1}{kn}\right).
  \end{equation}
\ignore{In order to compute an upper bound on the second smallest eigenvalue of the normalized Laplacian $\calL_{C_{2n,k}}$, define $\sigma=\frac{\vol_{C_{2n,k}}(U)}{\Delta(C_{2n,k})}$. Also, define $2n$-vector $\bfx$ by $\bfx(v)=1-\sigma$ if $v\in U$ and $\bfx(v)=-\sigma$ if $v\in V$. It implies that $\bfx\perp \hat{\bfpsi}_1,~\bfx^TL_{C_{2n,k}}\bfx=2nw_1$, and $\bfx^TD_{C_{2n,k}}\bfx=\frac{\vol_{C_{2n,k}}(U)\vol_{C_{2n,k}}(V)}{\vol_{C_{2n,k}}(U\cup V)}$. Equation \eqref{eq:nu2} implies that
\[ \evalNLH_2(C_{2n,k}) \leq \frac{\bfx^TL_{C_{2n,k}}\bfx}{\bfx^TD_{C_{2n,k}}\bfx} = \frac{2nw_1\vol_{C_{2n,k}}(U\cup V)}{\vol_{C_{2n,k}}(U)\vol_{C_{2n,k}}(V)} = \frac{4n^2w_1k(w_1+w_2)}{n^2k^2(w_1+w_2)^2} = \calO\left(\frac{w_1}{k(w_1+w_2)}\right)=\Theta\left(\frac{1}{kn}\right). \]
This proves that $C_{2n,k}$ is a tight example for the upper bounds of Theorem~\ref{thm:cheeger-ineq-hypergraph} up to a factor of $\sqrt{k}$. }Equations \eqref{eq:bicycle-newphi} and \eqref{eq:bicycle-phih} implies that $C_{2n,k}$ is a tight example for the lower bound of Eq.~\eqref{eq:banerjee}.

\begin{proposition}\label{thm:bicycle-conductance}
    $\newphi(C_{2n,k})=\Theta\left(\frac{1}{kn}\right).$
\end{proposition}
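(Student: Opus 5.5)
Two bounds are needed. The upper bound $\newphi(C_{2n,k})=\calO(1/(kn))$ comes from the cut $S=U$. For the lower bound $\newphi(C_{2n,k})=\Omega(1/(kn))$, we would rather not enumerate cuts directly. Instead we use the spectral lower bound of Eq.~\eqref{eq:cheeger-ineq-2-graph-diffusion}, $\newphi(H)\geq \evalNLH_2(H)/2$, and reduce the problem to showing $\evalNLH_2(C_{2n,k})=\Omega(1/(kn))$.

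\textbf{Upper bound.} The boundary $\partialh(U)=E_2\cup E_4$ consists of $2n$ cross hyperedges. Each has weight $w_1=1/n$ and contributes $\frac{w_1}{k-1}\cdot 1\cdot (k-1)=w_1$ to the numerator. The graph is $k(w_1+w_2)$-regular, so $\vol(U)=nk(w_1+w_2)$. This gives $\newphi(U)=2w_1/(k(w_1+w_2))=\Theta(1/(kn))$, as computed in Eq.~\eqref{eq:bicycle-newphi}.

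\textbf{Lower bound.} We pass to the AE graph $G_H$ (Lemma~\ref{lemma:graph-equivalence}) and work with its Laplacian. Since $H$ is regular of degree $d=k(w_1+w_2)=\Theta(k)$, we have $\evalNLH_2=\evalLH_2/d$, so it suffices to prove $\evalLH_2(L_{G_H})=\Omega(1/n)$. By rotation symmetry, $G_H$ is a weighted Cayley-type graph on $\bbZ_n\times\bbZ_2$. The shift $i\mapsto i+1$ commutes with $L$, so $L$ block-diagonalizes over the characters $\rchi_r$ into $2\times 2$ blocks indexed by $r\in\bbZ_n$.
\begin{itemize}
\item The internal edges $E_1\cup E_3$ contribute the cycle-hypergraph Laplacian of $C_{n,k}$ on each side. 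From the character computation in Section~\ref{section:cheeger-inequality:tightness:upper}, its eigenvalues are $\lambda_r=\frac{2}{k-1}\sum_{j=1}^{k-1}(k-j)(1-\cos\frac{2\pi jr}{n})$, which is $\Omega(k^2 r^2/n^2)$ for $r\neq 0$ small and bounded below by a constant of order $k$-ish times $\min(r,n-r)^2/n^2$ generally.
\item The cross edges have weight $w_1/(k-1)$ per pair, so they contribute only $\calO(1/n)$ in total to each diagonal entry of every block.
\end{itemize}
For $r=0$, the block is the Laplacian of two vertices joined by total cross weight $2(k-1)\cdot\frac{w_1}{k-1}=2/n$. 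Its nonzero eigenvalue is $\Theta(1/n)$; precisely, it is $2\cdot(\text{cross degree})=2\cdot 2w_1\cdot$const $=\Theta(1/n)$. For $r\neq 0$, every block is at least $\lambda_r I$ in the PSD order, because the cross part is PSD. Hence its smallest eigenvalue is $\Omega(k^2/n^2)$. That bound is too weak unless $k^2/n^2\gtrsim 1/n$, i.e.\ $k\gtrsim\sqrt n$.

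\textbf{Main obstacle.} A pure spectral argument therefore does not give $\Omega(1/(kn))$ uniformly in $k$. The second eigenvalue is really $\min(\Theta(1/n),\Theta(k^2/n^2))$, and the Cheeger lower bound loses the square. So the spectral route works only in that regime, and I would fall back on a direct combinatorial lower bound for general cuts $S$. The combinatorial argument splits into two cases.
\begin{itemize}
\item \emph{Case 1: $S$ cuts some internal hyperedges.} Each cut internal hyperedge contributes at least $w_2\cdot\frac{(k-1)\cdot 1}{k-1}=1$ to the numerator. The volume of the smaller side is at most $nk(w_1+w_2)=\calO(nk)$. So $\newphi(S)=\Omega(1/(nk))$ immediately.
\item \emph{Case 2: $S$ cuts no internal hyperedge.} Since $k<n/2$, the internal hyperedges connect each side, so $S$ is a union of sides. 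Hence $S\in\{U,V\}$ up to complement, and we are back in the computed case $U$.
\end{itemize}
This case split makes the lower bound trivial, and I expect it to be the cleanest route: the only step needing care is confirming that each internal cut edge contributes at least $1$, which holds because $|e\cap S||e\cap S^c|\geq k-1$ whenever both parts are nonempty.
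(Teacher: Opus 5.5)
Your final argument is correct and complete, but it takes a genuinely different route from the paper's. The paper tries to show that $U$ is the exact minimizer by comparing families of cuts. These are arcs of consecutive vertices in one cycle, non-consecutive sets, and sets $S_{i,j}$ straddling both cycles, whose cost $\Theta(k/n)$ is borrowed from the cycle-hypergraph analysis. Those comparisons are asserted rather than proved (``does not decrease proportionally to the same extent''), and arbitrary sets meeting both cycles are never really treated.

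Your dichotomy needs none of that. It uses only four facts:
the internal hyperedges are heavy ($w_2=1$ versus $w_1=1/n$);
a cut internal hyperedge contributes at least $1$, since $a(k-a)\ge k-1$ for $1\le a\le k-1$;
the windows chain each cycle together;
and $\min(\vol(S),\vol(V_{C_{2n,k}}\setminus S))\le k(n+1)$.
This gives the explicit sandwich $\frac{1}{k(n+1)}\le\newphi(C_{2n,k})\le\frac{2}{k(n+1)}$.

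What you give up is the paper's stronger claim that $U$ is an exact minimizer, but one more line recovers it. If $\emptyset\ne S\cap U\ne U$, pick $j$ with $u_j\in S$ and $u_{j+1}\notin S$. All $k-1\ge 2$ windows of $E_1$ containing $\{u_j,u_{j+1}\}$ are cut, so the numerator is at least $2$, which is the numerator of $U$. The denominator is at most $k(n+1)=\vol(U)$. The case of $V$ is identical. The same reasoning applies verbatim to $\phih$, which is what the tightness of the lower bound in Eq.~\eqref{eq:banerjee} actually requires.

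The spectral detour can simply be dropped. Its conclusion is right, but the estimate is loose by a factor of $k$. The internal Laplacian eigenvalue at $r=1$ is $\Theta(k^3/n^2)$; it is $\evalNLH_2(C_{n,k})$, after dividing by the degree $k$, that is $\Theta(k^2/n^2)$. Finally, connectivity of each cycle under its windows holds for every $k\ge 2$, so Case~2 does not use $k<n/2$.
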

\begin{proof}
        Let $S_j$ be the set of any $j$ consecutive vertices on the cycle $U$. Then, $\min(\vol_{C_{2n,k}}(U),\vol_{C_{2n,k}}(V))\geq\min(\vol_{C_{2n,k}}(S_j),\vol_{C_{2n,k}}(V_{C_{2n,k}}\setminus S_j))$ and $\sum_{e:e\in \partialh(S_j)}\frac{1}{|e|-1}|e\cap S_j||e\cap (V\setminus S_j)|$ does not decrease proportionally to the same extent for $S_j\subsetneq U$. In addition, if we take $T_j$ as the set of $j$ non-consecutive vertices, then $\sum_{e:e\in \partialh(S_j)}\frac{1}{|e|-1}|e\cap S_j||e\cap (V_{C_{2n,k}}\setminus S_j)|\leq\sum_{e:e\in \partialh(T_j)}\frac{1}{|e|-1}|e\cap T_j||e\cap (V_{C_{2n,k}}\setminus T_j)|$. It implies that $\newphi(S_j)\geq\newphi(U)$. Similarly, it follows that $\phih(S_j)\geq\phih(U)$. Similar analysis also works for the cycle $V$.

        Let $S_{i,\,j}$ be the subset of vertices containing $i$ consecutive vertices from cycle $U$ and $j$ consecutive vertices from cycle $V$. Similar to the proof of Proposition~\ref{thm:cycle-conductance}, we can show that $S_{n/2,\,n/2}=\underset{i,\,j}{\arg\min}\,\newphi(S_{i,\,j})$ and $\newphi(S_{n/2,\,n/2})=\Theta\left(\frac{k}{n}\right)$. Similarly, $S_{n/2,\,n/2}=\underset{i,\,j}{\arg\min}\,\phih(S_{i,\,j})$ and $\phih(S_{n/2,\,n/2})=\Theta\left(\frac{1}{n}\right)$.

        It implies that $U=\arg\min_{S:\emptyset\subsetneq S\subsetneq V_{C_{2n,k}}} \newphi\left(S\right)=\arg\min_{S:\emptyset\subsetneq S\subsetneq V_{C_{2n,k}}} \phih\left(S\right)$ and $\newphi(U)=\phih(U)=\Theta\left(\frac{1}{kn}\right)$.
\end{proof}

\ignore{\paragraph{Discussion: Is diffusion conductance a better generalization of conductance for hypergraphs?} We see above that not only is the diffusion conductance version of Cheeger's inequality, Theorem~\ref{thm:cheeger-ineq-hypergraph}, tight on both the upper and lower side, but the examples that provide proof of tightness are natural generalizations of the examples that provide tightness on the upper and lower sides for $2$-graphs. The cycle hypergraph provides tightness up to $\Upsilon_H$ for Corollary~\ref{thm:cheeger-ineq-hypergraph-cor}. This leads us to think that diffusion conductance is the right generalization of conductance for hypergraphs. However, it also opens up the question: Is there an example that shows Corollary~\ref{thm:cheeger-ineq-hypergraph-cor} to be tight on the upper side even when $\Upsilon_H$ is not a constant? If so, the conductance could be considered a suitable generalization of $2$-graph conductance.}

\ignore{\subsection{Relationship to other hypergraph Cheeger's inequalities}
\label{section:cheeger-inequality:others}
In this subsection, we state some Cheeger's inequalities for hypergraphs existing in the literature, and we compare these inequalities with the Cheeger's inequality proved by us.
\subsection{The Cheeeger's inequality proved by Banerjee \cite{banerjee2021spectrum}} Let $H$ be a connected hypergraph, and let $\evalNLH_2(H)$ be the second smallest eigenvalue of normalized Laplacian $\calL_H$ of $H$. Banerjee \cite{banerjee2021spectrum} proved that
\begin{equation}
  \label{eq:cheeger-ineq-banerjee}
  \frac{(cr_H-1)\evalNLH_2(H)}{2r_H^2(r_H-1)} \leq \phih(H) \leq (r_H-1)\sqrt{(2-\evalNLH_2(H))\evalNLH_2(H)}.
\end{equation}
Comparing Inequalities \eqref{eq:cheeger-ineq-hypergraph} and \eqref{eq:cheeger-ineq-banerjee}, we note that the lower bound of $\phih(H)$ in Eq.~\eqref{eq:cheeger-ineq-hypergraph} proved by us is bigger than that in Eq.~\eqref{eq:cheeger-ineq-banerjee} by multiplicative factor $\frac{2r_H(r_H-1)}{(cr_H-1)}$. Also, the upper bound of $\phih(H)$ in Eq.~\eqref{eq:cheeger-ineq-hypergraph} proved by us is smaller than that in Eq.~\eqref{eq:cheeger-ineq-banerjee} by a multiplicative factor $\frac{1}{(r_H-1)}$. Therefore, we have proved tighter bounds on conductance $\phih(H)$ as compared with Cheeger's Inequality proved by Banerjee \cite{banerjee2021spectrum}.

\subsection{The Cheeger's inequality for uniform hypergraphs proved by Xu and Zhou~\cite{xu2024normalized}} Let $H$ be a connected $k$-uniform hypergraph, and let $\evalNLH_2(H)$ be the second smallest eigenvalue of the normalized Laplacian $\calL_H$ of $H$. Xu and Zhou~\cite{xu2024normalized} proved that
\begin{equation}
  \label{eq:cheeger-ineq-xu-zhou}
  \frac{(k-1)\evalNLH_2(H)}{2} \leq \xuphi(H) \leq \min\left\{\sqrt{2(k-1)\evalNLH_2(H)},(k-1)\sqrt{\evalNLH_2(H)(2-\evalNLH_2(H))}\right\}.
\end{equation}
where $$\xuphi(S)=\frac{\sum_{e:e\in \partialh(S)}w_H(e)|e\cap S||e\cap (V_H\setminus S)|}{\min(\vol_H(S),\vol_H(V_H\setminus S))}$$
and $\xuphi(H)=\min_{S:\emptyset\subsetneq S\subsetneq V_H} \xuphi\left(S\right)$.

Consider the hyperbicycle $C_{2n,k}$ given in Section~\ref{section:cheeger-inequality-tightness-upper-bicycle}. For this uniform hypergraph, $\xuphi(C_{2n,k})=(k-1)\newphi(C_{2n,k})=\frac{(k-1)w_1}{k(w_1+w_2)}$. We also have $\sqrt{2(k-1)\evalNLH_2(H)}=\sqrt{\frac{2(k-1)w_1}{k(w_1+w_2)}}$ and $(k-1)\sqrt{\evalNLH_2(H)(2-\evalNLH_2(H))}=\Omega\left(\frac{1}{k}\right)$}

\section{Higher Order Cheeger Inequalities for Hypergraphs}\label{section:cheeger-inequality-high-order}
\begin{theorem}[Higher Order Cheeger's inequality for hypergraphs]
\label{thm:cheeger-ineq-hypergraph-high-order}
Let $\ell:1 < \ell < n$ be an integer and let $H=(V_H,E_H,w_H)$ be a hypergraph. If $\evalNLH_{\ell}(G)$ is the $\ell$-th smallest eigenvalue of its normalized Laplacian $\calL_H$. Then, we have
\begin{equation}
  \label{eq:cheeger-hypergraph-high-order}
  \frac{\evalNLH_{\ell}(H)}{\Upsilon_H} \leq \rhoh_{\ell}(H) \leq \calO({\ell}^2)\sqrt{\evalNLH_{\ell}(H)}
\end{equation}
\end{theorem}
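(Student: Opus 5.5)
The plan mirrors the proof of Theorem~\ref{thm:cheeger-ineq-general}: pass to the AE graph, apply the $2$-graph result, then convert from diffusion expansion to expansion. Let $G_H$ be the AE graph of $H$. By Lemma~\ref{lemma:graph-equivalence}, $\calL_{G_H}=\calL_H$, so $\evalNLG_{\ell}(G_H)=\evalNLH_{\ell}(H)$ for every $\ell$. Applying the higher order Cheeger inequality for $2$-graphs (Theorem~\ref{thm:cheeger-ineq-graph-higher}) to $G_H$ gives
\[
\frac{\evalNLH_{\ell}(H)}{2}\leq \rhog_{\ell}(G_H)\leq \calO(\ell^2)\sqrt{\evalNLH_{\ell}(H)}.
\]

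Next we identify $\rhog_{\ell}(G_H)$ with the order-$\ell$ diffusion expansion of $H$. The vertex sets of $G_H$ and $H$ coincide, so the minimization ranges over the same families of $\ell$ disjoint nonempty proper subsets. By Lemma~\ref{lemma:high-order-conductance-equivalence}, $\rhog_{\ell}(S_1,\ldots,S_\ell)=\newrho_{\ell}(S_1,\ldots,S_\ell)$ for each such family. Taking the minimum on both sides yields $\rhog_{\ell}(G_H)=\newrho_{\ell}(H)$, and hence
\[
\frac{\evalNLH_{\ell}(H)}{2}\leq \newrho_{\ell}(H)\leq \calO(\ell^2)\sqrt{\evalNLH_{\ell}(H)}.
\]

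Finally we use Eq.~\eqref{eq:trevisan}, which says $\rhoh_{\ell}(H)\leq\newrho_{\ell}(H)\leq\frac{\Upsilon_H}{2}\rhoh_{\ell}(H)$. For the upper bound, $\rhoh_{\ell}(H)\leq\newrho_{\ell}(H)\leq\calO(\ell^2)\sqrt{\evalNLH_{\ell}(H)}$. For the lower bound, $\rhoh_{\ell}(H)\geq\frac{2}{\Upsilon_H}\newrho_{\ell}(H)\geq\frac{2}{\Upsilon_H}\cdot\frac{\evalNLH_{\ell}(H)}{2}=\frac{\evalNLH_{\ell}(H)}{\Upsilon_H}$. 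Together these give \eqref{eq:cheeger-hypergraph-high-order}.

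The only point needing care is Eq.~\eqref{eq:trevisan}, which is derived from the setwise inequality \eqref{eq:banerjee-S}. For each hyperedge $e\in\partialh(S)$ we have $|e\cap S|\,|e\cap(V_H\setminus S)|\geq |e|-1$ and $|e\cap S|\,|e\cap(V_H\setminus S)|\leq |e|^2/4$. The latter gives $\frac{|e|^2}{4(|e|-1)}\leq \frac{\Upsilon_H}{2}$ for $|e|\geq 2$, since $|e|^2\leq 2\Upsilon_H(|e|-1)$ holds when $2\leq |e|\leq\Upsilon_H$. This yields \eqref{eq:banerjee-S} for every $S$. Taking the maximum over $S_1,\ldots,S_\ell$ preserves both inequalities, and so does the minimum over families. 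I expect no real obstacle beyond this bookkeeping. One should also check that Theorem~\ref{thm:cheeger-ineq-graph-higher} requires no connectivity assumption, so that it applies to $G_H$ for an arbitrary $H$.
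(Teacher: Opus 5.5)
Your proposal is correct and is essentially the paper's proof: the paper first obtains Lemma~\ref{lemma:high-order-cheeger-ineq} by applying Theorem~\ref{thm:cheeger-ineq-graph-higher} to the AE graph $G_H$ and invoking Lemma~\ref{lemma:high-order-conductance-equivalence}. It then converts from diffusion expansion to expansion via Eq.~\eqref{eq:trevisan}, exactly as you do. Your explicit check of the setwise inequality \eqref{eq:banerjee-S}, using $|e|-1\leq |e\cap S|\,|e\setminus S|\leq |e|^2/4$ and $|e|^2\leq 2\Upsilon_H(|e|-1)$, fills in a step the paper only describes as ``easy to see.''
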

\begin{proof}
    The proof easily follows from Lemma~\ref{lemma:high-order-cheeger-ineq} below and Eq.~\eqref{eq:trevisan}. 
\end{proof}
\begin{lemma}[Higher Order Cheeger's inequality for hypergraphs: Diffusion conductance version]
\label{lemma:high-order-cheeger-ineq}
Let $\ell:1 < \ell < n$ be an integer and let $H=(V_H,E_H,w_H)$ be a hypergraph. If $\evalNLH_{\ell}(G)$ is the $\ell$-th smallest eigenvalue of its normalized Laplacian $\calL_H$. Then, we have
\begin{equation}
  \label{eq:cheeger-high-order}
  \frac{\evalNLH_{\ell}(H)}{2} \leq \newrho_{\ell}(H) \leq \calO({\ell}^2)\sqrt{\evalNLH_{\ell}(H)}
\end{equation}
\end{lemma}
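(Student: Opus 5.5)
The plan is to reduce the statement to the higher-order Cheeger inequality for $2$-graphs (Theorem~\ref{thm:cheeger-ineq-graph-higher}), applied to the AE graph $G_H$ of $H$. This mirrors how Theorem~\ref{thm:cheeger-ineq-general} was derived from Theorem~\ref{thm:cheeger-ineq-graph}.

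First, by Lemma~\ref{lemma:graph-equivalence}, $H$ and $G_H$ have identical normalized Laplacians. Hence $\evalNLH_{\ell}(H)=\evalNLG_{\ell}(G_H)$ for every $\ell$. Applying Theorem~\ref{thm:cheeger-ineq-graph-higher} to the weighted $2$-graph $G_H$ then gives
\[
\frac{\evalNLH_{\ell}(H)}{2} \leq \rhog_{\ell}(G_H) \leq \calO(\ell^2)\sqrt{\evalNLH_{\ell}(H)}.
\]
This is legitimate because that theorem is stated for arbitrary (weighted) $2$-graphs. The parallel edges arising in $G_H$ can be merged into a single edge by summing their weights, which changes neither the matrices nor any cut quantity.

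Second, I would identify $\rhog_{\ell}(G_H)$ with $\newrho_{\ell}(H)$. By Lemma~\ref{lemma:high-order-conductance-equivalence}, for every collection of $\ell$ disjoint nonempty proper subsets $S_1,\ldots,S_\ell$ we have $\rhog_\ell(S_1,\ldots,S_\ell)=\newrho_\ell(S_1,\ldots,S_\ell)$. The vertex sets of $H$ and $G_H$ coincide, so the minima defining $\rhog_\ell(G_H)$ and $\newrho_\ell(H)$ range over exactly the same families of disjoint subsets. The minima are therefore equal, and substituting into the display above yields~\eqref{eq:cheeger-high-order}.

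The main point requiring care is not computational but a consistency check. The volumes in $G_H$ must agree with those in $H$, i.e.\ $d_{G_H}(u)=\sum_{v}A_H[u,v]=\sum_{e\ni u}w_H(e)\frac{|e|-1}{|e|-1}=d_H(u)$. This is what makes both Lemma~\ref{lemma:conductance-equivalence} and the equality of normalized Laplacians valid. The hypotheses of the cited $2$-graph result also need checking: connectivity is not assumed there, and for disconnected $H$ both sides behave consistently, since eigenvalue multiplicity at $0$ matches the number of components, giving zero-conductance sets. With these checked, the proof is a direct transfer.
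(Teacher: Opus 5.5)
Your proposal is correct and matches the paper's proof: you apply Theorem~\ref{thm:cheeger-ineq-graph-higher} to the AE graph $G_H$, whose normalized Laplacian equals $\mathcal{L}_H$ by Lemma~\ref{lemma:graph-equivalence}, and then identify $\rho_{g,\ell}(G_H)$ with $\hat{\rho}_{h,\ell}(H)$ via Lemma~\ref{lemma:high-order-conductance-equivalence}. Your explicit check that $d_{G_H}(u)=d_H(u)$, which makes the volumes agree, is a useful detail the paper leaves implicit.
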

\begin{proof}
The higher order Cheeger's Inequality Theorem~\ref{thm:cheeger-ineq-graph-higher} for $2$-graphs implies that
\begin{equation}
  \label{eq:cheeger-ineq-2-hypergraph-high-order}
  \frac{\evalNLH_{\ell}(H)}{2} \leq \rhog_{\ell}(G_H) \leq \calO({\ell}^2)\sqrt{\evalNLH_{\ell}(H)},
\end{equation}
where $G_H$ is the AE graph of $H$.
Lemma~\ref{lemma:high-order-conductance-equivalence} implies that $\rhog_{\ell}(G_H)=\newrho_{\ell}(H)$ and so the result follows.
\end{proof}
\ignore{\paragraph{Tight example for the lower bound} We proved in Section~\ref{section:cheeger-inequality:tightness:lower} that $k$-th smallest eigenvalue of normalized Laplacian of hyperhypercube $K_{\ell,k}$ is $\evalNLH_k\left(K_{\ell,k}\right) = \Theta(1/\ell)$. For $i:0\leq i<k$, define $S_i = \{\bm{u}\in\bbZ_k^\ell: \bm{u}^{(1)} \in \bbZ_k^{\ell-1}, \bm{u}(1)=i\}$
\rkcomment{Add a tight example for lower bound: hyperhypercube.}}
\ignore{\paragraph{Comparison with higher-order Cheeger inequality proved by Mulas~\cite{mulas2021cheeger}}} 

\section{Algorithmic Implications of Our Results}
\label{sec:algorithmic}
\subsection{A global partitioning algorithm for hypergraphs}\label{sec:algorithmic:fiedler-algo}
\paragraph{The algorithm} Spielman and Teng~\cite{spielman1996spectral} and Mihail~\cite{mihail1989conductance} devised a global two-way partitioning algorithm for $2$-graphs based on an eigenvector corresponding to the second smallest eigenvalue of the normalized Laplacian. The same algorithm can be applied to the second eigenvector of the hypergraph's normalized Laplacian. We recall that the algorithm first sorts the eigenvector in non-increasing order. Then, it returns the subset of vertices with minimum conductance based on the sweep cuts of the sorted eigenvector, where a sweep cut is defined as a prefix of the sorted eigenvector.

\paragraph{Quality of the solution set} Let $S$ be the final cluster returned by the algorithm. The fact that $S$ is a minimum conductance sweep cut of the eigenvector $\bfx$ corresponding to the second smallest eigenvalue implies that $\phih(S)\leq\sqrt{2\frac{\bfx^TL_H\bfx}{\bfx^TD_H\bfx}}=\sqrt{2\evalNLH_2(H)}$ (c.f., e.g., Chung~\cite{chung1996laplacians}) and Theorem~\ref{thm:cheeger-ineq-hypergraph} implies that $\evalNLH_2(H)\leq 2\newphi(H)$. Therefore, we have $\phih(S)\leq 2\sqrt{\newphi(H)}$. It implies that $S$ satisfies $\phih(H) \leq \phih(S) \leq 2\sqrt{\newphi(H)}$. So, we get a bound for the conductance of the partition computed in terms of the diffusion conductance.

The algorithm's running time for hypergraphs is the same as in the case of $2$-graphs.

\ignore{
\paragraph{Running time} Sorting the eigenvector $\bfx$ corresponding to second smallest eigenvalue of normalized Laplacian of $H$ takes time $\calO(\lvert V_H\rvert\ln{\lvert V_H\rvert})$. Also, finding the sweep cut with the best conductance takes time $\calO(\Delta(H))$. This is the case because the computation of the conductance of a sweep cut from that of the sweep cut at the previous step requires all the hyperedges incident on the newly added vertex to the updated sweep cut to be processed for finding the edge boundary of the updated sweep cut. Therefore, running time of Fiedler's algorithm is $\calO(\lvert V_H\rvert\ln{\lvert V_H\rvert}+\Delta(H))$
}

\subsection{A multi-way partitioning algorithm for hypergraphs}
\label{sec:algorithmic:multi-way-clustering}
\paragraph{The algorithm} Lee, Gharan, and Trevisan~\cite{lee2014multiway} devised a multi-way partitioning algorithm for $2$-graphs which can easily be extended to hypergraphs as follows:
\begin{itemize}
\item Given the $\ell$ eigenvectors $\rh_1,~\rh_2,~\ldots,~\rh_{\ell}$ corresponding to the smallest $\ell$ normalized Laplacian eigenvalues define the mapping $F:V_H \longrightarrow \bbR^{\ell}$ by $F(v)\coloneqq (\rh_1(v),~\rh_2(v),~\ldots,~\rh_{\ell}(v))$ and the radial projection distance $d_F(u,v) \coloneqq \left\lVert \frac{F(u)}{\lVert F(u) \rVert} - \frac{F(v)}{\lVert F(v) \rVert} \right\rVert$.
\item Partition the vertex set $V_H$ into $\ell$ disjoint sets $S_1,~S_2,~\ldots,~S_{\ell}$ with the radial projection distance $d_F$ using the $\ell$-means algorithm.
\item  For all $i : 1 \leq i \leq \ell$, assume that the set $S_i$ is sorted in monotonically decreasing order of $\lVert F(\cdot) \rVert^2$ values. Return the minimum conductance sweep cut $T_i$ of $S_i$ for all $i : 1 \leq i \leq \ell$.
\end{itemize}  

\ignore{
\paragraph{Running time} The algorithm $\ell$-means takes time $\calO(n^{\ell^2+1})$. Sorting takes time $\calO(\sum_{i=1}^{\ell}\lvert S_i\rvert\ln{\lvert S_i\rvert})$, and finding the sweep cuts with best conductance takes time $\calO(\sum_{i=1}^{\ell}\vol_H(S_i))$. Therefore, the running time of the algorithm is $\calO(n^{\ell^2+1} + \sum_{i=1}^{\ell}\lvert S_i\rvert\ln{\lvert S_i\rvert} + \Delta(H))$. \rkcomment{Running time of $\ell$-means algorithm to be updated.}
}

\paragraph{Quality of the solution set} For all $i : 1 \leq i \leq \ell$, the set $T_i$ satisfies $\phih(T_i) \leq \calO(\ell^2)\sqrt{\evalNLH_{\ell}(H)}$ which implies that $\rhoh_{\ell}(T_1,~T_2,~\ldots,~T_{\ell}) \leq \calO(\ell^2)\sqrt{\evalNLH_{\ell}(H)}$ (c.f., e.g., Lee, Gharan, and Trevisan~\cite{lee2014multiway}). Also, Eq.~\eqref{eq:cheeger-high-order} implies that $\evalNLH_{\ell}(H) \leq 2\newrho_{\ell}(H)$. Therefore, we have $\rhoh_{\ell}(T_1,~T_2,~\ldots,~T_{\ell}) \leq \calO(\ell^2)\sqrt{\newrho_{\ell}(H)}$. This implies that the solution sets $T_1,~T_2,~\ldots,~T_{\ell}$ satisfy $\rhoh_{\ell}(H) \leq \rhoh_{\ell}(T_1,~T_2,~\ldots,~T_{\ell}) \leq \calO(\ell^2)\sqrt{\newrho_{\ell}(H)}$.

The algorithm's running time for hypergraphs is the same as in the case of $2$-graphs.

\ignore{\subsection{Local clustering algorithm for hypergraphs}\label{sec:algorithmic:local-clustering}

\paragraph{The algorithm} Kamal and Bagchi~\cite{kamal2024lovasz} adapted for hypergraphs the personalized pagerank-based algorithm for local clustering given by Andersen, Chung, and Lang~\cite{andersen2007using} for $2$-graphs. The algorithm first finds a personalized PageRank vector with the query node taken as a seed vertex. Then, it returns the sweep cut with minimum conductance based on the sorted personalized PageRank vector. While~\cite{andersen2007using} used the random walk on the $2$-graph to compute personalized PageRank, the algorithm of~\cite{kamal2024lovasz} uses averaging-based diffusion on hypergraphs for this purpose.

\paragraph{Improved running time bound} Kamal and Bagchi~\cite{kamal2024lovasz} proved that the running time of their local clustering algorithm is $\calO\left(\frac{\Delta(H)}{\phih^2(H)} \cdot \ln \frac{\Delta(H)}{d_H(s)}\right)$.  Theorem~\ref{theorem:convergence-rate} implies that the running of the local clustering algorithm is actually $\calO\left(\frac{\Delta(H)}{\newphi^2(H)} \cdot \ln \frac{\Delta(H)}{d_H(s)}\right)$. This is a better running time since $\newphi(H) \geq \phih(H)$. This improvement can be as high as $r_H^2$ for hypergraphs for which Eq.~\eqref{eq:banerjee} is tight on the upper side. In particular, $\newphi(C_{n,k})=k\phih(C_{n,k})$ for $(n,k)$-hypercycle $C_{n,k}$ defined in Section~\ref{section:cheeger-inequality:tightness:upper} which implies that running time improves by a factor of $k^2$.

We note that our contribution here is only an improvement in the running time of the algorithm. The output and quality of the solution remain the same as in~\cite{kamal2024lovasz}.}


\ignore{
\begin{algorithm}[h]
\caption{\revision{\textsc{Fiedler}}}
\label{alg:fiedler-algorithm}
{\scriptsize
\begin{algorithmic}[1]\normalsize
\Require A hypergraph $H$ and an eigenvector $\bfx$ corresponding to the second smallest eigenvalue of normalized Laplacian of H.
\Ensure Returns disjoint sets of vertices $S_1,~S_2$ such that $V=S_1\cup S_2$ with $\phih(S_1)=\phih(S_2)\leq 2\sqrt{\phih(H)}$.
\State Sort the vertices such that $\bfx(v_i)\geq\bfx(v_j)$ if $i\leq j$.
\State $k\longleftarrow\underset{i}{\arg\min}~\phi(\{v_1,~v_2,~\ldots,~v_i\})$.
\State $S_1=\{v_1,~v_2,~\ldots,~v_k\}$.
\State $S_2=\{v_{k+1},~v_{k+2},~\ldots,~v_n\}$.
\State\Return $S_1,~S_2$.
\end{algorithmic}}
\end{algorithm}
\paragraph{Running time} Given the eigenvector $\bfx$ corresponding to second smallest eigenvalue of normalized Laplacian of $H$, sorting the vertices in step $1$ of Algorithm~\ref{alg:fiedler-algorithm} takes time $\calO(\lvert V\rvert\ln{\lvert V\rvert})$. Also, finding the value of $k$ in step $2$ of Algorithm~\ref{alg:fiedler-algorithm} takes time $\calO(\Delta(H))$. This is the case because computation of $\phi(\{v_1,~v_2,~\ldots,~v_i,~v_{i+1}\})$ from $\phi(\{v_1,~v_2,~\ldots,~v_i\})$ requires all the hyperedges incident on $v_{i+1}$ to be processed for finding the edge boundary of $\{v_1,~v_2,~\ldots,~v_i,~v_{i+1}\}$. Therefore, running time of Algorithm~\ref{alg:fiedler-algorithm} is $\calO(\lvert V\rvert\ln{\lvert V\rvert}+\Delta(H))$
\paragraph{Clustering quality} We have $\phih(S_1)=\phih(S_2)\leq\sqrt{2\frac{\bfx^TL_H\bfx}{\bfx^TD_H\bfx}}=\sqrt{2\evalNLH_2(H)}$ and Corollary~\ref{thm:cheeger-ineq-hypergraph} implies that $\evalNLH_2(H)\leq 2\phih(H)$. Therefore, we have $\phih(S_1)=\phih(S_2)\leq 2\sqrt{\phih(H)}$.}

\ignore{In this section, we give proofs of the results given in the previous sections. In Section~\ref{sec:proof:tightness}, we prove the claims made for the tightness examples given in Section~\ref{section:cheeger-inequality:tightness}. \ignore{We give an analysis of expander hypergraphs in Section~\ref{sec:proof:expanders}. In Section~\ref{sec:ls-theorem-proof}, we prove Lov\'asz-Simonovits theorem stated in Section~\ref{sec:ls-theorem}.} Proof of the upper bound of Theorem~\ref{thm:cheeger-ineq-hypergraph} is given in Section~\ref{thm:cheeger-ineq-upper-bound-proof}.

\subsection{Detailed analysis of tightness examples}
\label{sec:proof:tightness}

\subsubsection{Tightness example for the upper bound: The cycle hypergraph}
\label{section:cheeger-inequality:tightness:upper-proof}
Given an even natural number $n$, let $C_{n,k}=(V,E)$ be the hypergraph with vertex set $V=\bbZ_n$ and hyperedge set $E=\{ (i, i+1 \mod n,\ldots,i+k-1 \mod n) : 0 \leq i \leq n-1\}$ where $k:2\leq k<\frac{n}{2}$.} 

\ignore{The adjacency matrix of $C_{n,k}$ is same as that of the weighted additive Cayley $2$-graph $G=(\bbZ_n,S,\omega)$ where $S=\{\pm 1,~\pm 2,~\ldots,~\pm (k-1)\}$. We write $-1$ rather than $n-1$ to simplify the definition of the mapping $\omega$. The mapping $\omega$ assigns weight$\frac{k-|s|}{k-1}$ to $s\in S$ and weight $0$ to the elements in $V\setminus S$ such that $\omega(s)=\omega(-s)$ for all $s\in S$. Note that $\rchi_r(x)=\exp(\frac{2\pi xri}{n})$ is a character of $G=(\bbZ_n,S,\omega)$ for all $r=0,~1,~\ldots,~(n-1)$. Therefore, the second-largest eigenvalue of the adjacency matrix is
\begin{align*}
    \sum_{\underset{\hspace{-0.75cm}j\neq 0}{j=-(k-1)}}^{k-1}\frac{k-|j|}{k-1}\rchi_1(j)&=\frac{2}{k-1}\sum_{j=1}^{k-1}(k-j)\cos{\frac{2\pi j}{n}}\\
    &= \frac{2}{k-1}\sum_{j=1}^{k-1}(k-j)\left(1-\Theta\left(\frac{j^2}{n^2}\right)\right)\\
    &=k-\Theta\left(\frac{1}{n^2}\right)\frac{2}{k-1}\sum_{j=1}^{k-1}(k-j)j^2
\end{align*}
It implies that $\evalNLH_2(C_{n,k})=\Theta\left(\frac{1}{n^2}\right)\frac{2}{k(k-1)}\sum_{j=1}^{k-1}(k-j)j^2=\Theta\left(\frac{k^2}{n^2}\right)$.

\subsubsection{Proof of Proposition~\ref{clm:clique-hypergraph}}
\label{section:cheeger-inequality:tightness:lower-proof}
The definition of $H_{\ell,k}$ given in Section~\ref{section:cheeger-inequality:tightness:lower} implies that $H_{1,k}$ consists of a single hyperedge containing $k$ vertices $0,~1,~\ldots,~i,~\ldots,~k-1$. The hypergraph $H_{2,k}$ is obtained from $H_{1,k}$ as follows. The hypergraph $H_{2,k}$ consists of $k$ copies of $H_{1,k}$. Let these $k$ copies of $H_{1,k}$ be denoted by $H_{1,k}^{(1)},~H_{1,k}^{(2)},~\ldots,~H_{1,k}^{(k)}$. Insert $k$ new hyperedges $e_1,~e_2,~\ldots,~e_i,~\ldots,~e_k$ as follows. The hyperedge $e_i$ consists of $k$ copies of vertex $i$ from $H_{1,k}^{(1)},~H_{1,k}^{(2)},~\ldots,~H_{1,k}^{(k)}$. Similarly, $H_{3,k}$ is obtained from $k$ copies $H_{2,k}^{(1)},~H_{2,k}^{(2)},~\ldots,~H_{2,k}^{(k)}$ of $H_{2,k}$ by inserting new $k^2$ hyperedges. In general, $H_{\ell,k}$ is obtained from $k$ copies $H_{\ell-1,k}^{(1)},~H_{\ell-1,k}^{(2)},~\ldots,~H_{\ell-1,k}^{(k)}$ of $H_{\ell-1,k}$ by inserting new $k^{l-1}$ hyperedges. 

Let $K_k$ denote the complete $2$-graph with $k$ vertices. For $\ell \geq 1$ and $i \in [\ell]$, let $K_k^{(i)}$ be $\ell$ copies of $K_k$. Define $K_{\ell,k} = \prod_{i \in [\ell]} K_k^{(i)}=K_{\ell-1,k}\times K_k$ for all $l\geq 2$. The Laplacian of $K_k$ has eigenvalue $0$ with multiplicity one and eigenvalue $k$ with multiplicity $k-1$. It implies that $k$ is the second smallest eigenvalue of the Laplacian of $K_{\ell,k}$ for all positive integers $l$~\cite{merris:1994}. The degree of each vertex of $K_{\ell,k}$ is $l(k-1)$. Therefore, second smallest eigenvalue of normalized Laplacian of $K_{\ell,k}$ is $\Theta\left(\frac{1}{l}\right)=\Theta\left(\frac{\log k}{\log n}\right)$.

It is important to note that adjacency matrix of $H_{\ell,k}$ is $\frac{1}{k-1}$ times that of $K_{\ell,k}$ for all positive integers $l$. Therefore, the normalized Laplacian of $H_{\ell,k}$ is the same as that of $K_{\ell,k}$ for all positive integers $l$. It implies that second smallest eigenvalue of normalized Laplacian of $H_{\ell,k}$ is $\evalNLH_2(H_{\ell,k})=\evalNLG_2(K_{\ell,k})=\Theta\left(\frac{1}{l}\right)=\Theta\left(\frac{\log k}{\log n}\right)$.}

\ignore{\subsubsection{Tightness example for the lower bound of Eq.~\eqref{eq:banerjee}\ignore{the upper bound of Theorem~\ref{thm:cheeger-ineq-hypergraph}}: The hyperbicycle}
\label{section:cheeger-inequality-tightness-upper-bicycle}
\ignore{We now present an example that is tight up to a factor of $\sqrt{r_H}$ for the upper bound of Theorem~\ref{thm:cheeger-ineq-hypergraph}. }This example is one in which the conductance and diffusion conductance are equal, i.e., a tight example for the lower bound in Eq.~\eqref{eq:banerjee}.

\begin{figure}[htbp]
\includegraphics[width=\columnwidth]{plots/HyperBiCycle.jpg}
\caption{The $(2n,k)$-hyperbicycle $C_{2n,k}$: only a few hyperedges are shown for illustration.}
\label{fig:hyperbicycle}
\end{figure}

Consider the following hypergraph that we call the {\em $(2n,k)$-hyperbicycle}, $C_{2n,k}$, defined for $n \geq 6$ and $k:3\leq k<\frac{n}{2}$. The vertex set of $C_{2n,k}$ is $V_{C_{2n,k}} = U\cup V$ where $U=\{u_0,~u_1,~\ldots,~u_{n-1}\}$ and $V=\{v_0,~v_1,~\ldots,~v_{n-1}\}$. The hyperedge set of $C_{2n,k}$ is $E_{C_{2n,k}}$ consisting of 4 disjoint sets: = $E_1, E_2, E_3$ and $E_4$. The sets $E_1$ and $E_3$ contain hyperedges $\{u_i, u_{i+1 \mod n}, \ldots, u_{i+k-1 \mod n}\}$  and $\{v_i, v_{i+1 \mod n}, \ldots, v_{i+k-1 \mod n}\}$ respectively for all $0 \leq i < n$. These we call {\em internal} hyperedges. The $i$-th hyperedge of $E_2$ contains a single vertex from $V$, $v_i$, and $k-1$ vertices from $U$, $u_i, u_{i+1 \mod n}, \ldots u_{i+k-2 \mod n}$, and $E_2$ contains such an edge for each $0 \leq i < n$. The hyperedges of $E_4$ symmetrically contain one vertex from $U$ and $k-1$ vertices from $V$.  We call $E_2$ and $E_4$ {\em cross hyperedges}. $C_{2n,k}$ is clearly a $k$-uniform hypergraph. See Figure~\ref{fig:hyperbicycle} for an illustration.

\ignore{
The sets $E_1,~E_2,~E_3,\text{ and }E_4$ of hyperedges are defined as follows
\begin{align*}
    E_1 &= \{ e_i : 0 \leq i \leq n-1\} \text{ where } e_i=\{u_j : j=i+\ell \mod n,~0 \leq \ell \leq k-1\}. \\
    E_2 &= \{ e_i : 0 \leq i \leq n-1\} \text{ where } e_i=\{v_i\}\cup\{u_j : j=i+\ell \mod n,~0 \leq \ell \leq k-2\}. \\
    E_3 &= \{ e_i : 0 \leq i \leq n-1\} \text{ where } e_i=\{v_j : j=i+\ell \mod n,~0 \leq \ell \leq k-1\}. \\
    E_4 &= \{ e_i : 0 \leq i \leq n-1\} \text{ where } e_i=\{u_i\}\cup\{v_j : j=i+\ell \mod n,~0 \leq \ell \leq k-2\}.
\end{align*}}
We assume that each cross hyperedge $e\in E_2\cup E_4$ has weight $w_1$ and that each internal hyperedge $e\in E_1\cup E_3$ has weight $w_2$ where $w_1=1/n$ and $w_2=1$. So each vertex $v\in V_{C_{2n,k}}$ has degree $k(w_1+w_2)$ i.e. $C_{2n,k}$ is $k(w_1+w_2)$-regular. It implies that $\vol_{C_{2n,k}}(U)=\vol_{C_{2n,k}}(V)=nk(w_1+w_2)$. We show in Proposition~\ref{thm:bicycle-conductance} that $U=\arg\min_{S:\emptyset\subsetneq S\subsetneq V_{C_{2n,k}}} \newphi\left(S\right)$. It is to be noted that the boundary of $U$,  $\partialh(U)$, is precisely the cross edges $E_2\cup E_4$. For each $e\in\partialh(U)$, either $|e\cap U|=1$ or $|e\cap U|=k-1$. Using these facts, we compute the conductance to be
\begin{equation}
  \label{eq:bicycle-newphi}
  \newphi(C_{2n,k}) = \frac{2\sum_{i=1}^{n}\frac{w_1}{k-1}(k-1)}{nk(w_1+w_2)} = \Theta\left(\frac{w_1}{k(w_1+w_2)}\right)=\Theta\left(\frac{1}{kn}\right).
  \end{equation}
Also, $|\partialh(U)|=2n$ implies that
\begin{equation}
  \label{eq:bicycle-phih}
  \phih(C_{2n,k}) = \frac{2nw_1}{nk(w_1+w_2)} = \Theta\left(\frac{w_1}{k(w_1+w_2)}\right)=\Theta\left(\frac{1}{kn}\right).
  \end{equation}
\ignore{In order to compute an upper bound on the second smallest eigenvalue of the normalized Laplacian $\calL_{C_{2n,k}}$, define $\sigma=\frac{\vol_{C_{2n,k}}(U)}{\Delta(C_{2n,k})}$. Also, define $2n$-vector $\bfx$ by $\bfx(v)=1-\sigma$ if $v\in U$ and $\bfx(v)=-\sigma$ if $v\in V$. It implies that $\bfx\perp \hat{\bfpsi}_1,~\bfx^TL_{C_{2n,k}}\bfx=2nw_1$, and $\bfx^TD_{C_{2n,k}}\bfx=\frac{\vol_{C_{2n,k}}(U)\vol_{C_{2n,k}}(V)}{\vol_{C_{2n,k}}(U\cup V)}$. Equation \eqref{eq:nu2} implies that
\[ \evalNLH_2(C_{2n,k}) \leq \frac{\bfx^TL_{C_{2n,k}}\bfx}{\bfx^TD_{C_{2n,k}}\bfx} = \frac{2nw_1\vol_{C_{2n,k}}(U\cup V)}{\vol_{C_{2n,k}}(U)\vol_{C_{2n,k}}(V)} = \frac{4n^2w_1k(w_1+w_2)}{n^2k^2(w_1+w_2)^2} = \calO\left(\frac{w_1}{k(w_1+w_2)}\right)=\Theta\left(\frac{1}{kn}\right). \]
This proves that $C_{2n,k}$ is a tight example for the upper bounds of Theorem~\ref{thm:cheeger-ineq-hypergraph} up to a factor of $\sqrt{k}$. }Equations \eqref{eq:bicycle-newphi} and \eqref{eq:bicycle-phih} implies that $C_{2n,k}$ is a tight example for the lower bound of Eq.~\eqref{eq:banerjee}.

\begin{proposition}\label{thm:bicycle-conductance}
    $\newphi(C_{2n,k})=\Theta\left(\frac{1}{kn}\right).$
\end{proposition}
\begin{proof}
        Let $S_j$ be the set of any $j$ consecutive vertices on the cycle $U$. Then, $\min(\vol_{C_{2n,k}}(U),\vol_{C_{2n,k}}(V))\geq\min(\vol_{C_{2n,k}}(S_j),\vol_{C_{2n,k}}(V_{C_{2n,k}}\setminus S_j))$ and $\sum_{e:e\in \partialh(S_j)}\frac{1}{|e|-1}|e\cap S_j||e\cap (V\setminus S_j)|$ does not decrease proportionally to the same extent for $S_j\subsetneq U$. In addition, if we take $T_j$ as the set of $j$ non-consecutive vertices, then $\sum_{e:e\in \partialh(S_j)}\frac{1}{|e|-1}|e\cap S_j||e\cap (V_{C_{2n,k}}\setminus S_j)|\leq\sum_{e:e\in \partialh(T_j)}\frac{1}{|e|-1}|e\cap T_j||e\cap (V_{C_{2n,k}}\setminus T_j)|$. It implies that $\newphi(S_j)\geq\newphi(U)$. Similarly, it follows that $\phih(S_j)\geq\phih(U)$. Similar analysis also works for the cycle $V$.

        Let $S_{i,\,j}$ be the subset of vertices containing $i$ consecutive vertices from cycle $U$ and $j$ consecutive vertices from cycle $V$. Similar to the proof of Proposition~\ref{thm:cycle-conductance}, we can show that $S_{n/2,\,n/2}=\underset{i,\,j}{\arg\min}\,\newphi(S_{i,\,j})$ and $\newphi(S_{n/2,\,n/2})=\Theta\left(\frac{k}{n}\right)$. Similarly, $S_{n/2,\,n/2}=\underset{i,\,j}{\arg\min}\,\phih(S_{i,\,j})$ and $\phih(S_{n/2,\,n/2})=\Theta\left(\frac{1}{n}\right)$.

        It implies that $U=\arg\min_{S:\emptyset\subsetneq S\subsetneq V_{C_{2n,k}}} \newphi\left(S\right)=\arg\min_{S:\emptyset\subsetneq S\subsetneq V_{C_{2n,k}}} \phih\left(S\right)$ and $\newphi(U)=\phih(U)=\Theta\left(\frac{1}{kn}\right)$.
\end{proof}}

\ignore{\subsection{Proofs related to hypergraph expander construction}
\label{sec:proof:expanders}

\subsubsection{Proof of Lemma~\ref{thm:expander-1}} Construct hypergraph $K_{G(k,i)}^{(0)}$ from $G(k,i)$ as follows. The vertex set of $K_{G(k,i)}^{(0)}$ is same as that of $G(k,i)$ i.e. $V_{K_{G(k,i)}^{(0)}}=V_{G(k,i)}$. The hyperedge set $E_{K_{G(k,i)}^{(0)}}$ of $K_{G(k,i)}^{(0)}$ is defined by $E_{K_{G(k,i)}^{(0)}}=\{e_v:v\in V_{G(k,i)}\}$ where hyperedge $e_v$ consists of all one-hop neighbors of $v$ in $G(k,i)$ ($e_v$ does not contain $v$). So, there are total $n$ hyperedges in $K_{G(k,i)}^{(0)}$. The hypergraph $K_{G(k,i)}^{(0)}$ is $k$-regular and $k$-uniform. If $A_{G(k,i)}$ is adjacency matrix of $G(k,i)$, then the adjacency matrix of $K_{G(k,i)}^{(0)}$ is $\frac{1}{k-1}B$ where $B=A_{G(k,i)}^2-kI$. It implies that normalized Laplacian of $K_{G(k,i)}^{(0)}$ is $I-\frac{1}{k(k-1)}B$.

    We construct hypergraph $K_{G(k,i)}^{(1)}$ from $K_{G(k,i)}^{(0)}$ as follows. The vertex set of $K_{G(k,i)}^{(1)}$ is same as that of $K_{G(k,i)}^{(0)}$. The hyperedge set $E_{K_{G(k,i)}^{(1)}}$ of $K_{G(k,i)}^{(1)}$ is defined by $E_{K_{G(k,i)}^{(1)}}=\{e\setminus\{v\}:v\in e\text{ and }e\in E_{K_{G(k,i)}^{(0)}}\}$. Any hyperedge of $K_{G(k,i)}^{(0)}$ is a set of $k$ vertices. We replace each hyperedge of $K_{G(k,i)}^{(0)}$ by all possible $k$ subsets (new hyperedges) of size $k-1$. The hypergraph $K_{G(k,i)}^{(1)}$ is $k(k-1)$-regular and $(k-1)$-uniform. Then, adjacency matrix of $K_{G(k,i)}^{(1)}$ is $B$. It implies that normalized Laplacian of $K_{G(k,i)}^{(1)}$ is $I-\frac{1}{k(k-1)}B$. 

    In general, we construct hypergraph $K_{G(k,i)}^{(r)}$ from $K_{G(k,i)}^{(r-1)}$  for $r:1\leq r\leq k-2$ as follows. The vertex set of $K_{G(k,i)}^{(r)}$ is same as that of $K_{G(k,i)}^{(r-1)}$. The hyperedge set $E_{K_{G(k,i)}^{(r)}}$ of $K_{G(k,i)}^{(r)}$ is defined by $E_{K_{G(k,i)}^{(r)}}=\{e\setminus\{v\}:v\in e\text{ and }e\in E_{K_{G(k,i)}^{(r-1)}}\}$. We replace each hyperedge of $K_{G(k,i)}^{(r-1)}$ by all possible $\binom{k+1-r}{k-r}$ subsets (new hyperedges) of size $k-r$. The hypergraph $K_{G(k,i)}^{(r)}$ is regular with degree $\frac{k!}{(k-1-r)!}$ and $(k-r)$-uniform. Then, adjacency matrix of $K_{G(k,i)}^{(r)}$ is $\frac{(k-2)!}{(k-1-r)!}B$. This implies that normalized Laplacian of $K_{G(k,i)}^{(r)}$ is $I-\frac{1}{k(k-1)}B$.

    If we use the $k$-regular Ramanujan graphs explicitly constructed by Lubotzky, Phillips and Sarnak~\cite{lubotsky1988ramanujan}, then $\evalAG_2(G(k,i)))\leq 2\sqrt{k-1}$ implies that $\evalNLH_2\left(K_{G(k,i)}^{(r)}\right)\geq 1-\frac{3k-4}{k(k-1)}$ because the normalized Laplacian matrix of $K_{G(k,i)}^{(r)}$ is $I-\frac{1}{k(k-1)}(A_{G(k,i)}^2-kI)$.
    
\subsubsection{Proof of Lemma~\ref{thm:expander-2}}     For all $r:0 \leq r \leq k-2$, we construct the expander family $\scrH_{\scrG}^{(r)}=\{H_{G(k,i)}^{(r)}\}_{i\in \calI}$ of hypergraphs as follows. The vertex set of $H_{G(k,i)}^{(r)} \in \scrH_{\scrG}^{(r)}$ is the same as that of $G(k,i) \in \scrG$ i.e. $V_{H_{G(k,i)}^{(r)}}=V_{G(k,i)}$ for all $r:0 \leq r \leq k-2$ and for all $i\in\calI$. For $v\in V_{H_{G(k,i)}^{(r)}}$, let $N(v)$ be the one-hop neighborhood of $v$ in $V_{G(k,i)}$. Since $G(k,i)$ is $k$-regular, $|N(v)|=k$ for all $v\in G(k,i)$. The hyperedge set $E_{H_{G(k,i)}^{(r)}}$ of $H_{G(k,i)}^{(r)}$ is defined by $E_{H_{G(k,i)}^{(r)}}=\{e:e\subseteq N(v),~|e|=k-r,\text{ and }  v\in V_{G(k,i)}\}$. For every vertex $v\in V_{H_{G(k,i)}^{(r)}}$, the hyperedge set $E_{H_{G(k,i)}^{(r)}}$ consists of all possible $\comb[k]{k-r}$ subsets (each of size $k-r$) of $N(v)$ as hyperedges. Note that $H_{G(k,i)}^{(r)}$ is $(k-r)$-uniform and regular hypergraphs with degree $\frac{k!}{(k-1-r)!r!}$ for all $r:0 \leq r \leq k-2$.    

    Consider the expander family $\scrK_{\scrG}^{(r)}=\{K_{G(k,i)}^{(r)}\}_{i\in \calI}$ defined in Lemma \ref{thm:expander-1}. Note that $V_{H_{G(k,i)}^{(r)}}=V_{K_{G(k,i)}^{(r)}}=V_{G(k,i)}$ for all $r:0 \leq r \leq k-2$ and for all $i\in\calI$. The hyperedge set of $K_{G(k,i)}^{(r)}$ contains exactly $r!$ copies of each hyperedge of $H_{G(k,i)}^{(r)}$. In other words, the sets of hyperedges of $K_{G(k,i)}^{(r)}$ and $H_{G(k,i)}^{(r)}$ are identical except for the fact that each hyperedge $e\in E_{K_{G(k,i)}^{(r)}}$ has weight $r!w_{H_{G(k,i)}^{(r)}}(e)$ where $w_{H_{G(k,i)}^{(r)}}(e)$ is the weight of hyperedge $e\in H_{G(k,i)}^{(r)}$. Since multiplying all hyperedge weights by a constant does not change the normalized Laplacian matrix, so we have $\calL_{H_{G(k,i)}^{(r)}}=\calL_{K_{G(k,i)}^{(r)}}=I-\frac{1}{k(k-1)}(A_{G(k,i)}^2-kI)$ for all $r:0 \leq r \leq k-2$ and for all $i\in\calI$. Therefore, we have
    \[\evalNLH_2\left(H_{G(k,i)}^{(r)}\right)\geq 1-\frac{3k-4}{k(k-1)}\]
    for all $r:0 \leq r \leq k-2$ and for all $i\in\calI$.}

\ignore{\subsection{Proof of Lov\'asz-Simonovits theorem for uniform hypergraphs}\label{sec:ls-theorem-proof}
First, we define Lov\'asz-Simonovits curve for $2$-graphs. Given a probability distribution $\bfxi^{(0)}$ over the vertex set of $2$-graph $G=(V_G,E_G,w_G)$, define $\bfxi^{(\ell)}=\calD_G^{\ell}\bfxi^{(0)}$ for all integers $\ell\geq 1$ where $\calD_G$ is a diffusion matrix for $G$. After $\ell$-th iteration, let the vertices $v_1,~v_2,~\ldots,~v_n$ be sorted such that $\frac{\bfxi^{(\ell)}(v_i)}{d_G(v_i)}\geq\frac{\bfxi^{(\ell)}(v_j)}{d_G(v_j)}$ if $i\leq j$. The Lov\'asz-Simonovits curve $\bfI_g^{(\ell)}:[0,\Delta(G)]\longrightarrow [0,1]$ is defined by $\bfI_g^{(\ell)}(0)=0$ and $\bfI_g^{(\ell)}(\vol_G(S_i))=\sum_{j=1}^i\bfxi^{(\ell)}(v_j)$ for all non-negative integers $\ell$ and for all $i\in [n]$ where $S_i=\{v_1,~v_2,~\ldots,~v_i\}$. For all $i\in [n]$ and $\omega:\vol_G(S_{i-1})<\omega<\vol_G(S_i)$, the Lov\'asz-Simonovits curve is defined by
\[\bfI_g^{(\ell)}(\omega)=\frac{(\vol_G(S_i)-\omega)\bfI_g^{(\ell)}(\vol_G(S_{i-1}))+(\omega-\vol_G(S_{i-1}))\bfI_g^{(\ell)}(\vol_G(S_i))}{\vol_G(S_i)-\vol_G(S_{i-1})}\]
The points $\vol_G(S_1),~\sum_{j=1}^2\vol_G(S_j),~\ldots,~\sum_{j=1}^{n-1}\vol_G(S_j)$ are called hinge points of the Lov\'asz-Simonovits curve $\bfI_g^{(\ell)}$ which is monotonically increasing function. 

\subsubsection{Proof of Theorem~\ref{theorem:convergence-rate}} Given a $\kappa$-uniform hypergraph $H$, let $G_H$ be $2$-graph defined in the proof of Lemma~\ref{lemma:graph-equivalence}. The adjacency matrix $A_{G_H}$ of the $2$-graph $G_H$ is given by
\[A_{G_H}[u,v]=\sum_{e:u,v\in e}\frac{w_H(e)}{|e|-1}\]
Note that $A_H=-\frac{1}{k-1}D_H+\frac{k}{k-1}\calA_H$ and $\calD_H=\calA_HD_H^{-1}$ imply that lazy random walk based diffusion matrix for $G_H$ is $\calD_{G_H}=\frac{1}{2}I+\frac{1}{2}A_{G_H}D_{G_H}^{-1}=\frac{1}{2}(D_{G_H}+A_{G_H})D_{G_H}^{-1}=\frac{1}{2}\left(\frac{k-2}{k-1}D_H+\frac{k}{k-1}\calA_H\right)D_H^{-1}=\frac{k-2}{2(k-1)}I+\frac{k}{2(k-1)}\calD_H$. It also implies that $\calD_H=\calA_HD_H^{-1}=\left(\frac{1}{k}D_H+\frac{k-1}{k}A_H\right)D_H^{-1}=\frac{1}{k}I+\frac{k-1}{k}A_{G_H}D_{G_H}^{-1}$. Therefore, averaging based diffusion on $H$ is equivalent to lazy random walk diffusion on $G_{H}$ with $1/k$ probability of laziness.\ignore{$\frac{\kappa}{2}\calD_H$.}

It implies that $\bfI_g^{(\ell)}(\omega)=\left(\frac{\kappa}{2}\right)^{\ell}\bfI_h^{(\ell)}(\omega)$ for all $\omega:0\leq\omega\leq\Delta(G_H)$ and for all non-negative integers $\ell$ provided that initial distribution is same over the vertex sets of $H$ and $G_H$. The Lov\'asz-Simonovits curve defined using lazy random walk based diffusion for $2$-graph $G_H$ satisfies~\cite{spielman2013local}
\[\bfI_g^{(\ell)}(\omega_0)\leq\frac{1}{2}(\bfI_g^{(\ell-1)}(\omega_0-2\phig(G_H) \omega_1)+\bfI_g^{(\ell-1)}(\omega_0+2\phig(G_H) \omega_1))\]
where $\omega_1=\min(\omega_0,\vol_{H}(V_{H})-\omega_0)$. Using $\phig(G_H)=\newphi(H)$ and $\bfI_g^{(\ell)}=\left(\frac{\kappa}{2}\right)^{\ell}\bfI_h^{(\ell)}$ for all non-negative integers $\ell$, we get
\[\bfI_h^{(\ell)}(\omega_0)\leq\frac{1}{2}(\bfI_h^{(\ell-1)}(\omega_0-2\newphi(H) \omega_1)+\bfI_h^{(\ell-1)}(\omega_0+2\newphi(H) \omega_1))\]
which completes the proof.}

\section{Proof of upper bound of Theorem~\ref{thm:cheeger-ineq-hypergraph}} \label{thm:cheeger-ineq-upper-bound-proof}
Let $\rh_2$ be the eigenvector corresponding to the eigenvalue $\nu_2(H)$. Eq.~\eqref{eq:nu2} implies that
\begin{equation}\label{eq:eq1-cheeger-ineq-upper-bound-proof}
    \evalNLH_2(H)= \frac{\rh_2^T\calL_H\rh_2}{\rh_2^T\rh_2}=\frac{\bfx^TL_H\bfx}{\bfx^TD_H\bfx}
\end{equation}
\ignore{\abcomment{don't repeat this, it has already been defined earlier}}
where $\bfx=D_H^{-1/2}\rh_2$. Also, $\evalNLH_2(H)\rh_2=\calL_H\rh_2$ implies that $\evalNLH_2(H)D_H^{1/2}\bfx=\calL_HD_H^{1/2}\bfx=D_H^{-1/2}L_H\bfx$. For any vertex $u\in V_H$, we get the following by comparing the $u$-th element on both sides of the relation $\evalNLH_2(H)D_H^{1/2}\bfx=D_H^{-1/2}L_H\bfx$.
\begin{equation}\label{eq:eq2-cheeger-ineq-upper-bound-proof}
    \evalNLH_2(H)\sqrt{d_H(u)}\bfx(u) = \sqrt{d_H(u)}\bfx(u) - \sum_{v:v\in V_H\setminus\{u\}}\frac{A_H[u,v]}{\sqrt{d_H(u)}}\bfx(v)
\end{equation}
Multiplying Eq.~\eqref{eq:eq2-cheeger-ineq-upper-bound-proof} by $\sqrt{d_H(u)}\bfx(u)$, we get
\begin{equation}\label{eq:eq3-cheeger-ineq-upper-bound-proof}
    \evalNLH_2(H)d_H(u)(\bfx(u))^2 = \bfx(u)\left(d_H(u)\bfx(u) - \sum_{v:v\in V_H\setminus\{u\}}A_H[u,v]\bfx(v)\right)
\end{equation}
Let $S^+=\{u\in V_H : \bfx(u) > 0\}$. Define vector $\bfy$ by $\bfy(u)=\bfx(u)$ if $u\in S^+$, and 0 otherwise. Eq.~\eqref{eq:eq3-cheeger-ineq-upper-bound-proof} implies that
\ignore{Let $\{\bfx(u) : u \in V_H\}=\{\tau_1,~\tau_2,~\ldots,~\tau_{\ell}\}$ be such that $\tau_1\geq\tau_2\geq\ldots\geq\tau_{\ell}$. Define $S_i=\{u \in V_H : \bfx(u) \geq \tau_i\}$ for all $i = 1,~2,~\ldots,~\ell$. It implies that $\emptyset\subsetneq S_1 \subseteq S_2,~\ldots,~\subseteq S_{\ell} = V_H$. Let $\gamma:1\leq\gamma\leq\ell$ be largest integer such that $\tau_{\gamma} > 0$.} 
\begin{equation}\label{eq:eq4-cheeger-ineq-upper-bound-proof}
    \evalNLH_2(H)\sum_{u:u\in S^+}d_H(u)(\bfx(u))^2 = \sum_{u:u\in S^+}\bfx(u)\left(d_H(u)\bfx(u) - \sum_{v:v\in V_H\setminus\{u\}}A_H[u,v]\bfx(v)\right)
\end{equation}
Using $d_H(u)=\sum\limits_{v:v\in V_H\setminus\{u\}}A_H[u,v]$, we get
\begin{align}\label{eq:eq6-cheeger-ineq-upper-bound-proof}
    &\text{\hspace{-1.5cm}}\evalNLH_2(H)\sum_{u:u\in S^+}d_H(u)(\bfx(u))^2 \nonumber\\
    &\text{\hspace{-1cm}}=\sum_{u:u\in S^+}\bfx(u)\sum_{v:v\in V_H\setminus\{u\}}A_H[u,v]\left(\bfx(u)-\bfx(v)\right) \nonumber \\
    &\text{\hspace{-1cm}}=\sum_{u:u\in S^+}\bfx(u)\sum_{v:v\in S^+\setminus\{u\}}A_H[u,v]\left(\bfx(u)-\bfx(v)\right) + \sum_{u:u\in S^+}\bfx(u)\sum_{v:v\in V_H\setminus S^+}A_H[u,v]\left(\bfx(u)-\bfx(v)\right) \\
    &\text{\hspace{-1cm}}\geq\sum_{\{u,v\}:\{u,v\}\subseteq S^+}A_H[u,v]\left(\bfx(u)-\bfx(v)\right)^2 + \sum_{u:u\in S^+}\bfx(u)\sum_{v:v\in V_H\setminus S^+}A_H[u,v]\bfx(u)
\end{align}
where we have used the facts that $\bfx(v)\leq 0$ when $v\in V_H\setminus S^+$ and $\bfx(u)\left(\bfx(u)-\bfx(v)\right)+\bfx(v)\left(\bfx(v)-\bfx(u)\right)=\left(\bfx(u)-\bfx(v)\right)^2$. It does not affect Eq.~\eqref{eq:eq6-cheeger-ineq-upper-bound-proof} if we replace $\bfx$ values by the corresponding $\bfy$ values because $\bfx(u)$ and $\bfx(v)$ occur in this inequality only when $u,v\in S^+$. We get the following from Eq.~\eqref{eq:eq6-cheeger-ineq-upper-bound-proof} after using $\bfy(v)=0$ when $v\in V_H\setminus S^+$.
\begin{align}\label{eq:eq8-cheeger-ineq-upper-bound-proof}
    &\text{\hspace{0cm}}\evalNLH_2(H)\sum_{u:u\in S^+}d_H(u)(\bfy(u))^2 \nonumber\\
    &\text{\hspace{0cm}}\geq\sum_{\{u,v\}:\{u,v\}\subseteq S^+}A_H[u,v]\left(\bfy(u)-\bfy(v)\right)^2 + \sum_{u:u\in S^+}\sum_{v:v\in V_H\setminus S^+}A_H[u,v]\left(\bfy(u)-\bfy(v)\right)^2 \nonumber\\
    &\text{\hspace{0cm}}=\sum_{\{u,v\}:\{u,v\}\subseteq V_H}A_H[u,v]\left(\bfy(u)-\bfy(v)\right)^2
\end{align}
It implies that
\begin{equation}\label{eq:eq9-cheeger-ineq-upper-bound-proof}
    \evalNLH_2(H)\geq\frac{\sum_{\{u,v\}:\{u,v\}\subseteq V_H}A_H[u,v](\bfy(u)-\bfy(v))^2}{\sum_{u:u\in S^+}d_H(u)(\bfy(u))^2}
\end{equation}
Let $Q=\frac{\sum_{\{u,v\}:\{u,v\}\subseteq V_H}A_H[u,v](\bfy(u)-\bfy(v))^2}{\sum_{u:u\in S^+}d_H(u)(\bfy(u))^2}$. Using Cauchy-Schwarz inequality after multiplying the numerator and denominator by $\sum_{\{u,v\}:\{u,v\}\subseteq V_H}A_H[u,v](\bfy(u)+\bfy(v))^2$, we get
\begin{equation}\label{eq:eq10-cheeger-ineq-upper-bound-proof}
    Q\geq\frac{\left(\sum_{\{u,v\}:\{u,v\}\subseteq V_H}A_H[u,v]|(\bfy(u))^2-(\bfy(v))^2|\right)^2}{\left(\sum_{u:u\in S^+}d_H(u)(\bfy(u))^2\right)\left(\sum_{\{u,v\}:\{u,v\}\subseteq V_H}A_H[u,v](\bfy(u)+\bfy(v))^2\right)}
\end{equation}
We also have
{\allowdisplaybreaks
\begin{align}\label{eq:eq11-cheeger-ineq-upper-bound-proof}
    &\text{\hspace{0cm}}\sum_{\{u,v\}:\{u,v\}\subseteq V_H}A_H[u,v](\bfy(u)+\bfy(v))^2 \nonumber\\
    &\text{\hspace{0.5cm}}=\sum_{\{u,v\}:\{u,v\}\subseteq V_H}A_H[u,v]\left\{2(\bfy(u))^2+2(\bfy(v))^2-(\bfy(u)-\bfy(v))^2\right\} \nonumber\\
    &\text{\hspace{0.5cm}}=2\sum_{\{u,v\}:\{u,v\}\subseteq V_H}A_H[u,v]\left\{(\bfy(u))^2+(\bfy(v))^2\right\}-\sum_{\{u,v\}:\{u,v\}\subseteq V_H}A_H[u,v](\bfy(u)-\bfy(v))^2 \nonumber\\
    &\text{\hspace{0.5cm}}=2\sum_{\{u,v\}:\{u,v\}\subseteq V_H}A_H[u,v]\left\{(\bfy(u))^2+(\bfy(v))^2\right\} \nonumber\\
    &\text{\hspace{4.0cm}}-\frac{\sum_{\{u,v\}:\{u,v\}\subseteq V_H}A_H[u,v](\bfy(u)-\bfy(v))^2}{\sum_{u:u\in S^+}d_H(u)(\bfy(u))^2}\sum_{u:u\in S^+}d_H(u)(\bfy(u))^2 \nonumber\\
    &\text{\hspace{0.5cm}}=(2 - Q)\sum_{u:u\in S^+}d_H(u)(\bfy(u))^2
\end{align}}
Using Eq.~\eqref{eq:eq11-cheeger-ineq-upper-bound-proof} in Eq.~\eqref{eq:eq10-cheeger-ineq-upper-bound-proof}, we get
\begin{equation}\label{eq:eq12-cheeger-ineq-upper-bound-proof}
    Q(2 - Q)\geq\frac{\left(\sum_{\{u,v\}:\{u,v\}\subseteq V_H}A_H[u,v]|(\bfy(u))^2-(\bfy(v))^2|\right)^2}{\left(\sum_{u:u\in S^+}d_H(u)(\bfy(u))^2\right)^2}
\end{equation}
Either $\min\{\vol(S^+),\vol(V_H\setminus S^+)\}=\vol(S^+)$ or $\min\{\vol(S^+),\vol(V_H\setminus S^+)\}=\vol(V_H\setminus S^+)$. We complete the proof for the case when $\min\{\vol(S^+),\vol(V_H\setminus S^+)\}=\vol(S^+)$. The proof follows similarly for the other case. Let $\{\bfy(u) : u \in V_H\}=\{\tau_1,~\tau_2,~\ldots,~\tau_{\ell}\}$ be such that $\tau_1\geq\tau_2\geq\ldots\geq\tau_{\ell}=0$. Define $S_i=\{u \in V_H : \bfy(u) \geq \tau_i\}$ for all $i = 1,~2,~\ldots,~\ell$. It implies that $\emptyset\subsetneq S_1 \subsetneq S_2\subsetneq~\ldots~\subsetneq S_{\ell-1}\subsetneq S_{\ell} = V_H$ where $S_{\ell-1}=S^+$. Therefore, $\min\{\vol(S^+),\vol(V_H\setminus S^+)\}=\vol(S^+)$ implies that $\min\{\vol(S_i),\vol(V_H\setminus S_i)\}=\vol(S_i)$ for all $i:1\leq i\leq\ell-1$. We have
{\allowdisplaybreaks
\begin{align*}
    &\text{\hspace{-0.5cm}}\sum_{\{u,v\}:\{u,v\}\subseteq V_H}A_H[u,v]|(\bfy(u))^2-(\bfy(v))^2|\\
    &\text{\hspace{3.5cm}}=\sum_{\alpha=1}^{\ell-1}\sum_{\underset{\bfx(u)=\tau_{\alpha}}{u \in V_H}}\sum_{\beta=\alpha+1}^{\ell}\sum_{\underset{\bfx(v)=\tau_{\beta}}{v \in V_H}} A_H[u,v]\left(\tau_{\alpha}^2-\tau_{\beta}^2\right) \\
    &\text{\hspace{3.5cm}}=\sum_{\alpha=1}^{\ell-1}\sum_{\underset{\bfx(u)=\tau_{\alpha}}{u \in V_H}}\sum_{\beta=\alpha+1}^{\ell}\sum_{\underset{\bfx(v)=\tau_{\beta}}{v \in V_H}} A_H[u,v]\sum_{i=\alpha}^{\beta-1}\left(\tau_i^2-\tau_{i+1}^2\right)\\
    &\text{\hspace{3.5cm}}=\sum_{i=1}^{\ell-1}\sum_{\alpha=1}^{i}\sum_{\underset{\bfx(u)=\tau_{\alpha}}{u \in V_H}}\sum_{\beta=i+1}^{\ell}\sum_{\underset{\bfx(v)=\tau_{\beta}}{v \in V_H}} A_H[u,v]\left(\tau_i^2-\tau_{i+1}^2\right)\\
    &\text{\hspace{3.5cm}}=\sum_{i=1}^{\ell-1}\left(\tau_i^2-\tau_{i+1}^2\right)\sum_{u \in S_i}\sum_{v \in V_H\setminus S_i} A_H[u,v]
\end{align*}}
For all $i:0 \leq i \leq \ell - 1$, note that $\newphi(H) \leq \newphi(S_i)=\frac{\sum_{u \in S_i}\sum_{v \in V_H\setminus S_i} A_H[u,v]}{\vol_H(S_i)}$. It implies that
\begin{align*}
    &\text{\hspace{-0.5cm}}\sum_{\{u,v\}:\{u,v\}\subseteq V_H}A_H[u,v]|(\bfy(u))^2-(\bfy(v))^2|\\
    &\text{\hspace{3.5cm}}\geq\newphi(H)\sum_{i=0}^{\ell-1}\vol_H(S_i)\left(\tau_i^2-\tau_{i+1}^2\right)\\
    &\text{\hspace{3.5cm}}=\newphi(H)\left\{\vol_H(S_0)+\sum_{i=1}^{\ell-1}\vol(S_i\setminus S_{i-1})\tau_i^2\right\}\\
    &\text{\hspace{3.5cm}}=\newphi(H)\sum_{u:u\in S^+}d_H(u)(\bfy(u))^2
\end{align*}
Using this in Eq.~\eqref{eq:eq12-cheeger-ineq-upper-bound-proof}, we get
\begin{equation}\label{eq:eq15-cheeger-ineq-upper-bound-proof}
    Q(2 - Q)\geq\left(\newphi(H)\right)^2
\end{equation}
Note that $0\leq Q\leq \evalNLH_2(H)\leq 1$, and $Q(2 - Q)$ is an increasing function of $Q$ in interval $[0,1]$. This implies that $\evalNLH_2(H)(2 - \evalNLH_2(H))\geq\left(\newphi(H)\right)^2$. Therefore, we have
\[\newphi(H)\leq\sqrt{(2-\evalNLH_2(H))\evalNLH_2(H)}\]

\ignore{where $A_H[u,v]=\sum_{e:e\in E_H,\{u,v\}\subseteq e}\frac{w(e)}{|e|-1}$. It implies that
\begin{equation*}
    \evalNLH_2(H)=\frac{\bfx^TL_H\bfx}{\bfx^TD_H\bfx}=\frac{\left(\sum_{\{u,v\}:\{u,v\}\subseteq V_H}A_H[u,v](\bfx(u)-\bfx(v))^2\right)\left(\sum_{\{u,v\}:\{u,v\}\subseteq V_H}A_H[u,v](\bfx(u)+\bfx(v))^2\right)}{\bfx^TD_H\bfx\sum_{\{u,v\}:\{u,v\}\subseteq V_H}A_H[u,v](\bfx(u)+\bfx(v))^2}
\end{equation*}
Use of the Cauchy-Schwarz inequality implies that
\begin{equation}\label{eq:eq9-cheeger-ineq-upper-bound-proof}
    \evalNLH_2(H)\geq\frac{\left(\sum_{\{u,v\}:\{u,v\}\subseteq V_H}A_H[u,v]|(\bfx(u))^2-(\bfx(v))^2|\right)^2}{\bfx^TD_H\bfx\sum_{\{u,v\}:\{u,v\}\subseteq V_H}A_H[u,v](\bfx(u)+\bfx(v))^2}
\end{equation}
We also have
\begin{align*}
    &\text{\hspace{-1cm}}\sum_{\{u,v\}:\{u,v\}\subseteq V_H}A_H[u,v](\bfx(u)+\bfx(v))^2\\
    &\text{\hspace{3.5cm}}=\sum_{\{u,v\}:\{u,v\}\subseteq V_H}A_H[u,v]\left\{2(\bfx(u))^2+2(\bfx(v))^2-(\bfx(u)-\bfx(v))^2\right\}\\
    &\text{\hspace{3.5cm}}=2\sum_{\{u,v\}:\{u,v\}\subseteq V_H}A_H[u,v]\left\{(\bfx(u))^2+(\bfx(v))^2\right\}-\sum_{\{u,v\}:\{u,v\}\subseteq V_H}A_H[u,v](\bfx(u)-\bfx(v))^2\\
    &\text{\hspace{3.5cm}}=2\sum_{u:u \in V_H}d_H(u)(\bfx(u))^2-\bfx^TL_H\bfx\\
    &\text{\hspace{3.5cm}}=2\bfx^TD_H\bfx - \evalNLH_2(H)\bfx^TD_H\bfx \\
    &\text{\hspace{3.5cm}}=(2 - \evalNLH_2(H))\bfx^TD_H\bfx
\end{align*}
Using this in Eq.~\eqref{eq:eq4-cheeger-ineq-upper-bound-proof}, we get
\begin{equation}\label{eq:eq10-cheeger-ineq-upper-bound-proof}
    \evalNLH_2(H)\geq\frac{\left(\sum_{\{u,v\}:\{u,v\}\subseteq V_H}A_H[u,v]|(\bfx(u))^2-(\bfx(v))^2|\right)^2}{(2 - \evalNLH_2(H))(\bfx^TD_H\bfx)^2}
\end{equation}
Let $\{\bfx(u) : u \in V_H\}=\{\tau_0,~\tau_1,~\ldots,~\tau_{\ell}\}$ be such that $\tau_0\geq\tau_1\geq\ldots\geq\tau_{\ell}$. Define $S_i=\{u \in V_H : \bfx(u) \geq \tau_i\}$ for all $i = 0,~1,~\ldots,~\ell$. It implies that $\emptyset\subsetneq S_0 \subseteq S_1,~\ldots,~\subseteq S_{\ell} = V_H$. Then, we have
\begin{align*}
    &\text{\hspace{-1cm}}\sum_{\{u,v\}:\{u,v\}\subseteq V_H}A_H[u,v]|(\bfx(u))^2-(\bfx(v))^2|\\
    &\text{\hspace{3.5cm}}=\sum_{\alpha=0}^{\ell-1}\sum_{\underset{\bfx(u)=\tau_{\alpha}}{u \in V_H}}\sum_{\beta=\alpha+1}^{\ell}\sum_{\underset{\bfx(v)=\tau_{\beta}}{v \in V_H}} A_H[u,v]\left(\tau_{\alpha}^2-\tau_{\beta}^2\right) \\
    &\text{\hspace{3.5cm}}=\sum_{\alpha=0}^{\ell-1}\sum_{\underset{\bfx(u)=\tau_{\alpha}}{u \in V_H}}\sum_{\beta=\alpha+1}^{\ell}\sum_{\underset{\bfx(v)=\tau_{\beta}}{v \in V_H}} A_H[u,v]\sum_{i=\alpha}^{\beta-1}\left(\tau_i^2-\tau_{i+1}^2\right)\\
    &\text{\hspace{3.5cm}}=\sum_{i=0}^{\ell-1}\sum_{\alpha=0}^{i}\sum_{\underset{\bfx(u)=\tau_{\alpha}}{u \in V_H}}\sum_{\beta=i+1}^{\ell}\sum_{\underset{\bfx(v)=\tau_{\beta}}{v \in V_H}} A_H[u,v]\left(\tau_i^2-\tau_{i+1}^2\right)\\
    &\text{\hspace{3.5cm}}=\sum_{i=0}^{\ell-1}\left(\tau_i^2-\tau_{i+1}^2\right)\sum_{u \in S_i}\sum_{v \in V_H\setminus S_i} A_H[u,v]
\end{align*}
For all $i:0 \leq i \leq \ell - 1$, note that $\newphi(H) \leq \newphi(S_i)=\frac{\sum_{e:e\in \partialh(S_i)}\frac{w_H(e)}{|e|-1}|e\cap S_i||e\cap (V_H\setminus S_i)|}{\min(\vol_H(S_i),\vol_H(V_H\setminus S_i))}=\frac{\sum_{u \in S_i}\sum_{v \in V_H\setminus S_i} A_H[u,v]}{\min(\vol_H(S_i),\vol_H(V_H\setminus S_i))}$. It implies that
\begin{align*}
    &\text{\hspace{-1cm}}\sum_{\{u,v\}:\{u,v\}\subseteq V_H}A_H[u,v]|(\bfx(u))^2-(\bfx(v))^2|\\
    &\text{\hspace{3.5cm}}\geq\newphi(H)\sum_{i=0}^{\ell-1}\min(\vol_H(S_i),\vol_H(V_H\setminus S_i))\left(\tau_i^2-\tau_{i+1}^2\right)\\
    &\text{\hspace{3.5cm}}=\newphi(H)\left\{\min(\vol_H(S_0),\vol_H(V_H\setminus S_0))\tau_0^2+\right\}
\end{align*}}

\section{Conclusion And Open Questions}
\label{sec:conclusion}

In this work, we developed a spectral framework for analyzing general non-uniform hypergraphs. Building on Banerjee's adjacency matrix and Spiro's averaging-based diffusion, we established Cheeger-type inequalities for hypergraph conductance and derived an improved Cheeger's inequality for non-covering hypergraphs. We also showed that these bounds are tight on appropriate families of hypergraphs. Furthermore, our framework enables higher-order Cheeger inequalities and provides theoretical guarantees for Fiedler's spectral partitioning algorithm in hypergraphs. Most notably, we constructed a family of optimal hypergraph expanders that is tight for the Alon--Boppana bound.

\ignore{Our work reinforces the perspective that, unlike edges in standard $2$-graphs, which can be interpreted either as connectors or as channels for diffusion, it is more insightful to view \emph{hyperedges} primarily as mechanisms for \emph{multiway diffusion}. The \emph{averaging-based diffusion} process, which distributes mass equally among all vertices in a hyperedge, was studied by Spiro~\cite{spiro2022averaging} and later used by Kamal and Bagchi~\cite{kamal2024lovasz} to establish the Lovász--Simonovits theorem. This diffusion model can be interpreted as a ``lazy'' variant of Banerjee’s adjacency matrix, where the contribution of each edge to the matrix entries is normalized by the edge size minus $1$. The term \emph{lazy} is borrowed from the context of Markov chains, where a lazy chain remains in its current state with some probability, typically $1/2$, instead of always transitioning.

In Spiro’s formulation, a vertex diffusing mass through a hyperedge $e$ retains a fraction $1/|e|$ for itself and redistributes the remaining mass according to Banerjee’s adjacency structure. This approach stands in contrast to the diffusion models proposed by Takai, Miyauchi, Ikeda, and Yoshida~\cite{takai2020hypergraph} and Chan, Louis, Tang, and Zhang~\cite{chan2018spectral}, where mass is transferred from higher-mass vertices to lower-mass ones \emph{within} each hyperedge, depending on the current mass distribution. Their perspective is closer to a \emph{dyadic} or pairwise connectivity-based notion of diffusion, unlike Spiro’s uniform, structure-based approach.}

Our results leave the following important questions open. First, it is not yet known whether Corollary~\ref{thm:cheeger-ineq-hypergraph-cor} is \emph{tight on the upper bound} when the rank of the hypergraph is treated as a parameter. Second, in the context of \emph{expander construction}, our current method ensures that the rank of the hypergraph expander is at most the degree. A natural question is whether it is possible to construct expanders satisfying $\Upsilon_H = \Theta(d_H)$, which would yield hypergraph expanders where the largest hyperedge size grows proportionally with the vertex degree.

\ignore{Another promising direction for future work is the study of \emph{edge-dependent diffusions}, in which each hyperedge is allowed to diffuse mass according to a different, possibly non-uniform, distribution. This could lead to a richer class of diffusion dynamics with broader applications. A deeper exploration of the structure and properties of \emph{hypergraph expanders} is also likely to offer valuable theoretical and practical insights.}


\bibliographystyle{alphaurl}
\bibliography{hypergraph-diffusion}
\end{document}